\definecolor{darkgreen}{rgb}{0,0.5,0}
\definecolor{darkblue}{rgb}{0,0,0.7}
\definecolor{darkred}{rgb}{0.9,0.1,0.1}
\newtheorem*{rep@theorem}{\rep@title}
\newcommand{\newreptheorem}[2]{%
\newenvironment{rep#1}[1]{%
 \def\rep@title{#2 \ref{##1}}%
 \begin{rep@theorem}}%
 {\end{rep@theorem}}}
\newtheorem{theorem}{Theorem}
\newtheorem{proposition}{Proposition}
\newtheorem{lemma}[proposition]{Lemma}
\theoremstyle{remark}
\theoremstyle{definition}
\newtheorem{definition}[proposition]{Definition}
\newtheorem{remark}[proposition]{Remark}
\newtheorem{conjecture}[proposition]{Conjecture}
\numberwithin{equation}{section}
\numberwithin{proposition}{section}
\newcommand{\Z}{\mathbb{Z}}
\newcommand{\N}{\mathbb{N}}
\newcommand{\R}{\mathbb{R}}
\newcommand{\E}{\mathbb{E}}
\renewcommand{\P}{\mathbb{P}}
\newcommand{\F}{\mathcal{F}}
\newcommand{\Zd}{\mathbb{Z}^d}
\newcommand{\Rd}{{\mathbb{R}^d}}
\newcommand{\Ed}{E_d}
\newcommand{\ep}{\varepsilon}
\renewcommand{\a}{\mathbf{a}}
\renewcommand{\subset}{\subseteq}
\newcommand{\fact}[1]{#1\mathpunct{}!}
 \newcommand{\cu}{\square}
\renewcommand{\fint}{\strokedint}
\DeclareMathOperator{\dist}{dist}
\DeclareMathOperator*{\osc}{osc}
\DeclareMathOperator{\diam}{diam}
\DeclareMathOperator{\size}{size}
\DeclareMathOperator{\argmin}{argmin}
\renewcommand{\bar}{\overline}
\renewcommand{\tilde}{\widetilde}
\newcommand{\indc}{\mathds{1}}
\newcommand{\C}{\mathscr{C}}
\newcommand{\Pa}{\mathcal{P}}
\newcommand{\Pas}{\mathcal{P}_*}
\newcommand{\G}{\mathcal{G}}
\renewcommand{\S}{\mathcal{S}}
\newcommand{\M}{\mathcal{M}}
\newcommand{\X}{\mathcal{X}}
\renewcommand{\O}{\mathcal{O}}
\newcommand{\A}{\mathcal{A}}
\newcommand{\zbar}{\overline{z}}
\newcommand{\pc}{\mathfrak{p}_{\mathfrak{c}}}
\newcommand{\p}{\mathfrak{p}}
\newcommand{\e}{\mathfrak{e}}
\newcommand{\aconn}{\leftrightarrow_\a}
\newcommand{\T}{\mathcal{T}}
\begin{document}

\title[Optimal corrector estimates on percolation clusters]{Optimal corrector estimates on percolation clusters}

\begin{abstract}
We prove optimal quantitative estimates on the first-order correctors on supercritical percolation clusters: we show that they are bounded in dimension larger than $3$ and have logarithmic growth in dimension $2$ in the sense of stretched exponential moments. The main ingredients are a renormalization scheme of the supercritical percolation cluster, following the works of Pisztora~\cite{P96}; large-scale regularity estimates developed by Armstrong and the author in~\cite{AD2}; and a nonlinear concentration inequality of Efron-Stein type which is used to transfer quantitative information from the environment to the correctors.
\end{abstract}

\author[P. Dario]{Paul Dario}
\address[P. Dario]{Tel Aviv University, School of Mathematical Sciences, Tel Aviv, 69978, Israel}
\email{pauldario@mail.tau.ac.il}

\keywords{}
\subjclass[2010]{}
\date{\today}

\maketitle

\setcounter{tocdepth}{1}
\tableofcontents

\section{Introduction}
\label{s.introduction}

\subsection{Motivation and informal summary of results}
\label{ss.summary}

We consider the random conductance model on the supercritical percolation cluster defined as follows. We let $\Zd$ be the standard hypercubic lattice and $\mathcal{B}_d$ be the set of \textit{bonds} of $\Zd$. We fix a parameter $\lambda \in (0,1)$ and we are given a function
\begin{equation} \label{assenv}
\a : \mathcal{B}_d \rightarrow \{ 0 \} \cup [\lambda , 1],
\end{equation}
the value $\a(e)$ is called the conductance of the bond $e$ and we assume that the collection $\left( \a(e) \right)_{e \in \mathcal{B}_d}$ is an i.i.d family of random variables. We assume that the probability $\p := \P \left( \a(e) \neq 0 \right)> \pc(d)$, where $\pc(d)$ is the bond percolation threshold for the lattice $\Zd$. It follows that, almost surely, there exists a unique maximal connected component of bonds with nonzero conductance which we denote by $\C_\infty = \C_\infty(\a)$. One then wishes to study the continuous time \textit{random walk} $X_t$ in the random environment $\a$ defined as follows. We select an environment $\a$ such that $0$ belongs to the infinite cluster $\C_\infty$ and start a random walker  at the origin, $X(0) = 0$. Each edge $e$ is equipped with a random clock and rings after exponential waiting time with expectation $\a(e)^{-1}$. When $X(t) = x$, the random walker waits until a clock of an edge adjacent to $x$ rings and then moves instantly across that edge. Note that the random walker is confined to the infinite cluster $\C_\infty$. This random walk is a Markov process and a common strategy to study it is to consider its generator, which is given by the random discrete elliptic PDE
\begin{equation*}
- \nabla \cdot \a \nabla u,
\end{equation*}
where the operator $- \nabla \cdot \a \nabla u $ is defined on functions $u : \C_\infty \rightarrow \R$ by, for each point $x \in \C_\infty$,
\begin{equation*}
 \nabla \cdot \a \nabla u (x) = \sum_{y \sim x} \a(\{ x, y \}) (u(y) - u(x)).
\end{equation*}
In this article, we wish to study this random elliptic PDE by studying the (random) set of \textit{harmonic functions} for this operator. In~\cite{BDKY}, it was proved, in the case when $\a$ takes only the two values $0$ and $1$, that every harmonic function $h$ with prescribed linear growth is close to a linear function: the random vector space of harmonic functions with growth at most linear is finite dimensional; its dimension is equal to $(d+1)$ almost surely. Moreover, for each harmonic function in this space, there exists a unique vector $p \in \Rd$ such that the difference $\chi_p(x) := h(x) - p \cdot x$ grows sublinearly as $x$ tends to infinity. This result was quantified and extended to the generality presented in this introduction by Armstrong and the author in~\cite{AD2}, where it is shown that the corrector is $o\left(|x|^{1-\delta}\right)$ for some small but strictly positive exponent $\delta$.

The map $\chi_p$ is called the corrector and is the central object of this article: our goal is to prove optimal bounds in terms of spatial scaling (and suboptimal with respect to stochastic integrability) on the first-order correctors. We show, in the sense of stretched exponential moments, that the correctors are bounded in dimensions $d \geq 3$, and have increments which grow like the square root of the logarithm of the distance in dimension $2$. This result is summarized in the following theorem.

\begin{theorem}[Optimal $L^\infty$ estimates for first-order correctors]  \label{realmainthm}
\begin{sloppypar}There exist an exponent $s := s(d ,\p, \lambda ) > 0$ and a constant $C:= C(d , \p , \lambda) < \infty$ such that for each $x , y \in \Zd$ and each $p \in \Rd$, \end{sloppypar}
\begin{equation} \label{main.est.perco22}
\left| \chi_p(x) - \chi_p(y)\right| \indc_{\{x,y \in \C_\infty\}} \leq \left\{  \begin{array}{lcl}
    \O_s \left( C|p| \log^\frac12 |x-y| \right) & \mbox{if } d=2, \\
     \O_s \left( C|p| \right) & \mbox{if } d\geq 3, \\
  \end{array}
\right.
\end{equation}
where, for a random variable $X$, we write $X \leq \O_s(K)$, to mean
\begin{equation*}
\E \left[ \exp \left( \left( \frac{X}{K} \right)^s \right) \right] \leq 2.
\end{equation*}
\end{theorem} 
 
Obtaining information on the corrector is important and has proved to be useful. For instance,  qualitative sublinarity of the corrector can be used to prove invariance principles for the random walker $X_t$ following the general principle described below:  if one denotes by $\chi := (\chi_1 , \ldots, \chi_d)$ the vector-valued corrector, where $\chi_i$ is the corrector such that $e_i \cdot x + \chi_i(x)$ is harmonic, then the process
\begin{equation*}
X_t + \chi(X_t) ~~\mbox{is a martingale, almost surely with respect to the environment.}
\end{equation*}
The strategy is to apply a standard martingale convergence theorem and then to derive a quenched invariance principle for the rescaled process $\ep X_{t/ \ep^2} + \ep \chi(X_{t/ \ep^2})$. Using the sublinearity of the corrector allows to prove an invariance principle for the diffusion process $X$ itself. This approach was carried out on the infinite supercritical cluster (in the case when $\a$ takes only the values $0$ and $1$) first by Sidoravicius and Sznitman in~\cite{SS} in dimension larger than $4$, and a few years later by Mathieu, Piatnitski~\cite{MP} and Berger, Biskup~\cite{BB} in all dimensions $d \geq 2$. Prior to these results, the generator of the random walk was studied by Barlow in~\cite{Ba} and by Mathieu, Remy in~\cite{MR}, who proved heat-kernel bounds for the transition probability. 

In the more general setting of i.i.d random conductances, when $\a$ can a priori take values in $[0, \infty)$, a quenched functional central limit theorem was established by Andres, Barlow, Deuschel and Hambly in~\cite{ABDH}, provided that there exists an infinite cluster of nonzero conductances, based on the previous works of Mathieu~\cite{M}, Biskup and Prescott~\cite{BP}, Barlow and Deuschel~\cite{BD}. More general models of random walks on percolation clusters with long range correlation, including random interlacements and level sets of the Gaussian free field, are studied by Procaccia, Rosenthal and Sapozhnikov in~\cite{procaccia2016}.

\smallskip

Tight bounds on the corrector are useful to derive invariance principles but they are also the crucial ingredient for the derivation of optimal error and two-scale expansion estimates for the homogenization of general boundary value problems. They can be used to obtain \textit{a Berry-Essen theorem}, in the spirit of Mourrat~\cite{mourrat2012quantitative} in the uniformly elliptic setting (see also Andres and Neukamm~\cite{AN17} for an extension of these results to degenerate and correlated environments) and are also important to obtain precise information on the Green's function for the Laplacian on the infinite cluster as well as on the transition probability for the random walk, as is explained in~\cite[Chapters 8 and 9]{armstrong2017quantitative}. They can also inform the performance of numerical algorithms for the computation of the homogenized diffusivity~\cite{mourrat2016efficient} and of solutions to the heterogeneous equation~\cite{armstrong2018iterative}.

\smallskip

The tools developed in this article come from the theory of stochastic homogenization which studies the solutions of the elliptic equation
\begin{equation*}
- \nabla \cdot \a \nabla u = 0 \mbox{ in } \Rd,
\end{equation*}
where the environment $\a$ is a random map from $\Rd$ to the set of symmetric matrices, satisfying some assumptions of ellipticity, stationarity and ergodicity. There have been recent developments in the quantitative homogenization of uniformly elliptic divergence-form equations, which started with the work of Gloria and Otto~\cite{GO1}. In this article, they were able to obtain moments bounds on the corrector with an optimal spatial scaling, by using a spectral gap inequality, which was first introduced into stochastic homogenization by Naddaf and Spencer in~\cite{NS}, to quantify the ergodicity of the coefficient field. This program was then continued by Gloria and Otto in~\cite{GO2,  GO5, GO3} and by Neukamm Gloria and Otto in~\cite{GNO2, GNO, GNO3} and has implications to random walks as explained in~\cite{EGMN}.

Another approach was later initiated by Armstrong and Smart in~\cite{AS}, who extended the techniques of Avellaneda and Lin~\cite{AL1, AL2} and the ones of Dal Maso and Modica~\cite{DM1,DM2}, and were able to obtain a large scale $C^{0,1}$-regularity theory under an assumption of finite range dependence on the environment. These results were then generalized by Armstrong, Kuusi and Mourrat to general mixing conditions and to other types of equations~\cite{AM} and improved to obtain optimal rates of convergence~\cite{AKM2, armstrong2017quantitative}. 

The theory is now well-understood in the uniformly elliptic setting. Going beyond this setting has been the subject of much research recently in different directions. In~\cite{LNO}, Lamacz, Neukamm and Otto were able to extend these results to a model of Bernoulli bond percolation, where the standard model is modified such that all the bonds in a fixed unit direction are always open. Another way of removing the ellipticity assumption can be the following: we define some (scalar) random variables $0 < \lambda \leq \mu < \infty$ according to the formulas
\begin{equation*}
\lambda := \inf_{\xi \in \Rd\setminus\{0\}} \frac{\xi \cdot \a \xi}{|\xi|^2} ~\mbox{and}~  \mu := \sup_{\xi \in \Rd\setminus\{0\}} \frac{\xi \cdot \a \xi}{|\xi|^2},
\end{equation*}
and add an assumption on the integrability of $\lambda$ and $\mu$: there exist $p , q \in [1 , \infty ]$ such that
\begin{equation} \label{cond2}
\E \left[ \lambda^{-p} \right] + \E \left[ \mu^{q} \right] < \infty.
\end{equation}
This setting was first considered by Andres, Deuschel, Slowik in~\cite{andres2015invariance} (see also~\cite{andres2016heat}), and then by Chiarini and Deuschel in~\cite{chiarini2016invariance}. They are able to obtain a quenched invariance principle for the diffusion process under the assumption $1/p + 1/q < 2/d$, which allowed them to perform a Moser iteration. In~\cite{bella2018liouville}, Bella, Fehrman and Otto, still working under the assumption $1/p + 1/q < 2/d$, were able to obtain a first-order Liouville theorem and a large scale $C^{1, \alpha}$-estimate for $\a$-harmonic functions. An extension of these results to the case of time-dependent coefficients has been carried out by~\cite{andres2018quenched}. The condition~\eqref{cond2} requires the value of the conductances to be non-zero almost surely, an extension of this model in a case when the conductance is allowed to be zero and to be small (under some moments condition) was investigated by Deuschel, Nguyen and Slowik in~\cite{deuschel2018quenched}.

\smallskip

The setting considered in this article is different from the models satisfying condition~\eqref{cond2}: we are working with the i.i.d. random conductance model, and we assume the value of the conductances to be either $0$ or larger than some deterministic constant $\lambda >0$ (see~\eqref{assenv}), with the property that $\P\left( \a(e) \neq 0 \right) > \p_c(d)$. Despite this difference, the main challenge is essentially the same: adapting the various tools and proofs, available in the uniformly elliptic setting, to the degenerate elliptic environment. To this end, we follow the strategy initiated in the previous paper~\cite{AD2} and appeal to a renormalization structure for the supercritical percolation cluster. The construction is recalled in Section~\ref{section2}, where $\Zd$ is partitioned into triadic cubes of different random sizes, well-connected in the sense of Penrose an Pisztora~\cite{PP96}. This partition allows to distinguish regions of $\Zd$ where the infinite cluster is well-behaved, its geometry is similar to the one of the lattice $\Zd$, from regions where the infinite cluster is badly-behaved. In the first case, it is rather straightforward to adapt the theory developed in the uniformly elliptic setting; problems arise where the infinite cluster is badly-behaved. In this situation the theory cannot be adapted. Fortunately, there are few regions were the cluster is badly-behaved, and the theory of stochastic homogenization in the uniformly elliptic setting is robust enough to be adapted to the supercritical cluster.

Our strategy to prove the optimal scaling estimates for the corrector relies on a concentration inequality (cf. Proposition~\ref{resampling}), which gives a convenient way to transfer quantitative information from the coefficient field to the correctors. This idea originates in an unpublished paper from Naddaf and Spencer~\cite{NS}, and was then developed by Gloria and Otto~\cite{GO1, GO2} and Gloria, Neukamm and Otto~\cite{GNO} (see also Mourrat~\cite{mourrat2011variance}) to study stochastic homogenization. More precisely, thanks to this inequality we are able to obtain quantitative estimates on the spatial average of the gradient of the corrector.

We then use the the multiscale Poincar\'e inequality stated in Proposition~\ref{multiscalepoincare} to deduce the estimates on the oscillation of the correctors stated in Theorem~\ref{realmainthm} from the bounds on the spatial average of its gradient.

We conclude this introduction by noting that in Theorem~\ref{realmainthm}, the spatial scaling is optimal while the stochastic integrability is suboptimal: we only obtain a small exponent $s > 0$ of stochastic integrability. This is due to the degenerate structure of the percolation problem and while our method can provide an explicit value for the exponent $s$, we do not expect it can be used to derive the optimal exponent. We nevertheless provide a conjecture.
\begin{conjecture}
In dimension $d=2$, fix $s < 2$, then there exists a constant $C:= C(s, \p , \lambda) < \infty$ such that for each $x , y \in \Zd$ and each $p \in \Rd$,
\begin{equation} \label{10.26conjecture}
\left| \chi_p(x) - \chi_p(y)\right| \indc_{\{x,y \in \C_\infty\}} \leq
    \O_{\frac{s(d-1)}{d}} \left( C|p| \log^\frac12 |x-y| \right).
\end{equation}
In dimension $3$, there exists a constant $C:= C(d ,s, \p , \lambda) < \infty$ such that for each $x , y \in \Zd$ and each $p \in \Rd$,
\begin{equation*}
\left| \chi_p(x) - \chi_p(y)\right| \indc_{\{x,y \in \C_\infty\}} \leq
    \O_{\frac{s(d-1)}{d}} \left( C|p| \right).
\end{equation*}
\end{conjecture}
The reason behind this conjecture is the following: in the uniformly elliptic setting, it is known that the optimal stochastic integrability is the one provided in the statement of the conjecture without the term $(d-1)/d$, see~\cite[Theorem 4.1]{armstrong2017quantitative}. The additional term $(d-1)/d$ is a surface order large deviation effect which can be heuristically explained by the following argument: in the uniformly elliptic setting and in a given ball, to design a bad environment for which one does not have a good control on the growth of the corrector, it is necessary to have a number of ill-behaved edges of order of the volume of the ball. In the percolation setting, the situation is different and one only needs a number of ill-behaved edges of the order of the surface of the ball to design a bad environment: to illustrate this fact, one can note that, if we let $R$ be the radius of the ball, then it is possible to disconnect the ball into two half-balls with only $c R^{d-1}$ closed edges. This phenomenon should result in a deterioration of the stochastic integrability by a factor $(d-1)/d$. 

\medskip

\subsection{Notation and assumptions}

\subsubsection{General notation for the probabilistic model}

We denote by~$\Zd$ the standard~$d$-dimensional hypercubic lattice. The set of bonds of $\Zd$, that is the set of unoriented pairs of nearest neighbors, is denoted by~$\mathcal{B}_d:=\left\{ \{ x,y\}\,:\, x,y\in\Zd, |x-y|_1=1 \right\}$. More specifically, given a subset $U \subseteq \Zd$, we denote by $\mathcal{B}_d(U)$ the set of the bonds of $U$, i.e., $\mathcal{B}_d(U):=\left\{ \{ x,y\}\,:\, x,y\in U, |x-y|_1 =1 \right\}$. The canonical basis of $\Rd$ is denoted by $\e_1,\ldots,\e_d$. For~$x,y\in\Zd$, we write~$x\sim y$ if~$\{ x,y\} \in \mathcal{B}_d$. For some fixed ellipticity parameter $\lambda \in (0,1]$, we define the probability space~$\Omega:= \left( \{ 0\} \cup [\lambda,1] \right)^{\mathcal{B}_d}$ and we equip this probability space with the Borel~$\sigma$-algebra $\F := \mathcal{B}\left(  \{ 0\} \cup [\lambda,1]  \right)^{\otimes \mathcal{B}_d}$. Given an edge $e \in \mathcal{B}_d$, we denote by $\a(e)$ the projection
\begin{equation*} 
  \a(e) \, \colon \left\{
  \begin{array}{lcl}
    \Omega  & \to & \{0\}\cup[\lambda,1], \\
     \left( \omega_{e'} \right)_{e' \in \mathcal{B}_d} & \mapsto& \omega_e. \\
  \end{array} \right.
\end{equation*}
We denote by $\a$ the collection $ \left( \a(e) \right)_{e \in \mathcal{B}_d}$ and we refer to this mapping as the \emph{environment}. For every set $U \subseteq \Zd$, we denote by $\F(U)\subseteq\F$ the $\sigma$-algebra generated by the mappings $ \left( \a(e) \right)_{e \in \mathcal{B}_d(U)}$.

We fix a probability measure $\P_0$ supported in $\{0\}\cup[\lambda,1]$ satisfying the property
\begin{equation} \label{assumption.00}
\p := \P_0 \left( [\lambda,1]  \right) > \pc(d).
\end{equation}
where $\pc(d)$ is the bond percolation threshold for the lattice $\Zd$.
We then equip the measurable space~$(\Omega,\F)$ with the i.i.d. probability measure $\P = \P_0^{\otimes \mathcal{B}_d}$, so that the sequence $(\a(e))_{e \in \mathcal{B}_d}$ is an i.i.d. collection of random variables of law $\P_0$. The expectation with respect to the probability measure~$\P$ is denoted by $\E$. 

Given an environment $\a$, we say that a bond $e\in\mathcal{B}_d$ is \emph{open} if $\a(e) >0$ and \emph{closed} if $\a(e)=0$. Given two vertices $x,y\in\Zd$, we say that there is a \emph{path connecting $x$ and $y$} if there exists a sequence of open edges of the form $\{x,z_1\},  \ldots, \{z_{n},z_{n+1}\},\ldots,\{z_N,y\}$. The two vertices $x$ and $y$ are then said to be \emph{connected}, which we denote by $x \aconn y$, if there exists a path connecting $x$ and $y$. A \emph{cluster} is a connected subset $\C\subseteq \Zd$. Thanks to the assumption~\eqref{assumption.00}, we know that, $\P$--almost surely, there exists a unique maximal infinite cluster (see~\cite{BK}). This cluster is denoted by $\C_\infty := \C_\infty (\a).$

We also denote by $\Ed:= \left\{ (x,y)\,:\, x,y\in\Zd, x\sim y \right\}$ the set of oriented edges. More generally, we define, for a subset $U \subseteq \Zd$, $\Ed(U):= \left\{ (x,y)\,:\, x,y\in U, x\sim y \right\}$.

For $x \in \Zd$, we define the translation $\tau_x$ on $\Omega$ to be the map
\begin{equation*}
\tau_x  \, \colon \left\{
  \begin{array}{lcl}
    \Omega  & \to & \Omega, \\
     \left( \omega_{e} \right)_{e \in \mathcal{B}_d} & \mapsto & \left( \omega_{e + x} \right)_{e \in \mathcal{B}_d}. \\
  \end{array} \right.
\end{equation*}
Note that the measure $\P$ is stationary with respect to the $\Zd$-translations: for each $x \in \Zd$,
\begin{equation} \label{Pisstat}
\left( \tau_x \right)_* \P = \P,
\end{equation}
where $\left( \tau_x \right)_* \P$ is the pushforward measure defined by the formula, for each set $A \in \F,$ $\left( \tau_x \right)_* \P (A) = \P\left( \tau_x^{-1} \left( A \right) \right)$.
 
\subsubsection{Notation for functions}
For each vector $p \in \Rd,$ we denote by $l_p$ the affine function of slope $p$, i.e., $l_p(x) = p \cdot x$. Given a function $u$ defined on a discrete set $U \subseteq \Zd$, we define its oscillation by the formula
\begin{equation*}
    \osc_U u := \sup_U u - \inf_U u.
\end{equation*}
We define a \emph{vector field} to be a function $G: \Ed \to \R$ satisfying the antisymmetry property:  for each $(x,y)\in \Ed,$ $$G(x,y) = -G(y,x).$$ For a given a function $u: \Zd \to \R$, we define its gradient $\nabla u$ to be the vector field
\begin{equation*} \label{}
(\nabla u)(x,y) := u(x)-u(y).
\end{equation*}
For a random function defined on a cluster $\C$, $u: \C\to \R$, we define $\nabla u$ to be the vector field defined on the edges of $\Zd$ by the formula
\begin{equation} \label{defvectorfieldcluster}
(\nabla u)(x,y) :=  \left\{
  \begin{array}{lcl}
     u(x)-u(y) & \mbox{if } x,y \in \C \mbox{ and } \a \left( \left\{ x,y \right\} \right) \neq 0, \\
     0 & \mbox{otherwise}. \\
  \end{array}
\right.
\end{equation}
and $\a\nabla u$ to be the vector field defined by
\begin{equation*} \label{}
\left( \a \nabla u\right)(x,y) := \a\left( \{x,y\} \right) (\nabla u) (x,y) . 
\end{equation*}
The cluster $\C$ will frequently be the infinite cluster $\C_\infty.$ We may also think of the gradient as a vector-valued operator, as it is commonly the case for continuous functions: we denote by, for any point $x \in \C$ and any function $u : \C \to \R$,
\begin{equation} \label{eq:18321302}
    \nabla u(x) := \begin{pmatrix}
\nabla u (x + \e_1 , x) \\[3mm]
\vdots \\[3mm]
\nabla u (x + \e_d , x) 
\end{pmatrix}.
\end{equation}
For $p \in\Rd$, we denote by $p$ the constant vector field, defined according to the formula
\begin{equation*} \label{}
p(x,y) := p\cdot (x-y). 
\end{equation*}
With these conventions, we have $\nabla l_p = p$. For a given vector field $G$ and a point $x \in \Zd$, we define 
\begin{equation} \label{e.magF} 
\left| G \right|(x):= \left(  \sum_{ y : (x , y) \in \Ed } \left| G(x,y) \right|^2 \right)^{\frac12}.
\end{equation}
For a given a subset $U \subseteq \Zd$, we equip the space of vector fields with a scalar product $\left\langle \cdot , \cdot \right\rangle$, defined by
\begin{equation*} \label{}
\left\langle F,G \right\rangle_U 
:= \sum_{(x,y) \in \Ed(U)} F(x,y) G(x,y).
\end{equation*}
We will also frequently make use of the following notation, given a vector field $G$, we define
\begin{equation*}
  \left \langle G \right\rangle_U =\sum_{(x,y) \in \Ed(U) } G(x,y) (x- y).
\end{equation*}
The value $\left \langle G \right\rangle_U$ belongs to the space $\Rd$. Given an environment $\a$, two functions $u,v:\Zd \to \R$, and a subset $U \subseteq \Zd$, the Dirichlet form can be written with the previous notation as
\begin{equation*} \label{}
\left\langle \nabla u,\a\nabla v \right\rangle_U =  \sum_{(x,y) \in \Ed(U)} \left( u(x)- u(y) \right) \a\left(\{x,y\} \right) \left( v(x) - v(y) \right). 
\end{equation*}
We define the elliptic operator $-\nabla\cdot\a\nabla$ by, for each function $u:\Zd \to \R$ and each point $x\in \Zd$,
\begin{equation*} \label{}
\left( -\nabla\cdot\a\nabla u\right)(x):= \sum_{x\sim y}\a(\{x,y\})(u(x)-u(y)). 
\end{equation*}
For a  given a subset $U \subseteq \Zd$, we define the random set of $\a$-harmonic functions in $U$ by
\begin{equation*}
\A(U) := \left\{ u : U \to \R \, : \, \left( -\nabla\cdot\a\nabla u\right)(x) =0, \, x \in \mathrm{int}_\a U \right\},
\end{equation*}
where $\mathrm{int}_\a U$ is the interior of $U$ with respect to the environment $\a$, defined according to the formula
\begin{equation*}
\mathrm{int}_\a U := \left\{ x \in U ~:~ \forall y \in \Zd, ~\left(  y \sim x \mbox{ and } \a(\{x,y\}) \neq 0  \right) \implies y \in U  \right\}.
\end{equation*}
If $U$ is a finite set, we denote its cardinality by $|U|$.

\smallskip

For vectors of $\Rd$, we denote by $|\cdot|$ the standard infinite norm given by $|x| = \max_{i=1,\ldots,d} |x_i|$. We define a pseudometric on the subsets of $\Zd$ by $\dist(U,V) = \inf_{x\in U, y \in V} |x-y|$.

We also use the notations $B_R(x)$ or $B(x , R)$ to denote the ball centered at $x \in \Zd$ with radius $R > 0$ with respect to the infinite norm. The ball $B_R(0)$ is simply denoted by $B_R$. 

\smallskip

\subsubsection{Notation for cubes} A \emph{cube} is a subset of $\Zd$ of the form 
\begin{equation*} \label{}
\cu := \left( z + \left( -N , N \right)^d \right) \cap \Zd, \,  N \in \N,  z\in \Zd. 
\end{equation*}
We define the \emph{center} and the \emph{size} of the cube $\cu$ to be the point $z\in \Zd$ and the integer $2N -1$. We denote its size by $\size(\cu)$. In particular, with this convention, we have $|\cu| = \left( \size(\cu) \right)^d$.  For a non-negative real number $r>0$ and a cube $\cu$, of center $z \in \Zd$ and size $(2N-1) \in \N$, we denote by $r\cu$ the cube
\begin{equation*}
r\cu := \left( z + \left( -rN, rN \right)^d \right) \cap \Zd.
\end{equation*}
This notation is non-standard; the multiplication by $r$ only affects the size of the cube but the center of the cube remains unchanged. We introduce a specific category of cubes, namely the \emph{triadic cubes}. A triadic cube is a cube of the form
\begin{equation} \label{d.triadiccube}
\cu_n(z):=  \left( z + \left( -\frac12 3^n, \frac12 3^n \right)^d \right) \cap \Zd , \, n \in\N, \, z\in 3^n\Zd. 
\end{equation}
To simplify the notation, we write $\cu_n = \cu_n(0)$. This collection of cubes enjoys a number of convenient properties. First, any two triadic cubes (of possibly different sizes) are either disjoint or else one is included in the other. Moreover, for every $m,n\in\N$ with $n\leq m$, the triadic cube $\cu_m$ can be uniquely partitioned into $3^{d(m-n)}$ disjoint triadic cubes of size $3^n$. We denote by $\T$ the collection of triadic cubes and by $\T_n$ the collection of triadic cubes of size $3^n$. 

For each integer $n \in \N$ and each cube $\cu \in \T_n$, we define the predecessor of $\cu$, to be the unique triadic cube $\tilde{\cu} \in \T_{n+1}$ such that $\cu \subseteq \tilde{\cu}$. If $\tilde{\cu}$ is the predecessor of $\cu$, then we say that $\cu$ is a successor $\tilde{\cu}$. In particular, a cube of the set $\T_0$ does not have any successor, while a cube of the set $\T \setminus \T_0$ has exactly $3^d$ successors.

\subsubsection{The $\O_s$ notation}
We introduce a series of notations and properties which will be useful to measure the stochastic integrability and sizes of random variables. Given two parameters $s,\theta >0$ and a non-negative random variable $X$, we denote by
\begin{equation*} \label{}
X \leq \O_s(\theta) \mbox{ if and only if } \E \left[  \exp\left( \left( \frac{X}{\theta} \right)^s \right)   \right] \leq 2.  
\end{equation*}
Note that, by Markov's inequality, the tail of a random variable $X$ satisfying the inequality $X\leq \O_s(\theta)$ decreases stretched exponentially fast: for every $t >0$, 
\begin{equation*} \label{}
\P \left[ X \geq \theta t  \right]  \leq 2\exp\left( -t^s \right).
\end{equation*}
For a given sequence $(Y_i)_{i\in \N}$ of non-negative random variables and a sequence $(\theta_i)_{i\in \N}$ of non-negative real numbers, we write 
\begin{equation*} \label{}
X \leq\sum_{i \in \N} Y_i \O_s(\theta_i),
\end{equation*}
to mean that there exists a sequence of non-negative random variables $(Z_i)_{i\in \N}$ such that for each integer $i\in \N$, $Z_i \leq \O_s(\theta_i)$ and
\begin{equation*}
X \leq\sum_{i \in \N} Y_i Z_i.\end{equation*}
We now record some properties pertaining to this notation. All these properties are proved in~\cite[Appendix A]{armstrong2017quantitative}. The notation is compatible with the addition, meaning that, for any stochastic integrability exponent $s>0$, there exists a constant $C$ depending only on $s$, which may be chosen to be~$1$ if $s\geq 1$, such that
\begin{equation}
\label{e.Osums}
X_1 \leq \O_s(\theta_1) \mbox{ and } X_2 \leq \O_s(\theta_2) \implies X_1 +X_2 \leq \O_s(C(\theta_1+\theta_2)).
\end{equation}
More generally, for any $s>0$, there exists a constant $C(s)<\infty$ such that, for every measure space $(X,\F,\mu)$, every jointly measurable family $\{ X(x) \}_{x\in E}$ of non-negative random variables and every measurable function $\theta : E \rightarrow \R_+$, we have
\begin{equation} \label{e.Oavgs}
\forall x\in E, \ X(x) \leq \O_s(\theta (x)) \implies \int_E X(x) \, d\mu(x) \leq \O_s \left(C \int_E \theta(x) \, d\mu(x) \right). 
\end{equation}
The constant can be chosen to be
\begin{equation} \label{est.cte.Oavgs}
 \left\{
  \begin{array}{lcl}
     C(s) = \left( \frac{1}{s\ln 2}\right)^{\frac 1s} & \mbox{if } s < 1 \\
     C(s) = 1 & \mbox{if } s \geq 1. \\
  \end{array}
\right.
\end{equation} 
From the definition, we have, for each $\lambda \in \R_+$,
\begin{equation*}
X \leq \O_s(\theta) \implies \lambda X \leq \O_s(\lambda \theta).
\end{equation*}
This notation is also compatible with the multiplication in the sense that
\begin{equation}
\label{e.Oprods}
|X_1| \leq \O_{s_1}(\theta_1)  \mbox{ and }  |X_2| \leq \O_{s_2}(\theta_2) \implies |XY| \leq \O_{\frac{s_1s_2}{s_1+s_2}}\left( \theta_1 \theta_2 \right). 
\end{equation}
It is easy to check from~\eqref{e.Oprods} that one can reduce the integrability exponent $s$, i.e., for each $0 < s' < s$, there exists a constant $C := C(s') < \infty$ such that 
\begin{equation}
\label{e.improves}
X \le \O_{s}(\theta_1) \implies X \le \O_{s'} (C \theta_1).
\end{equation}

\subsubsection{Convention for constants and exponents} \label{subsecconcstexp}
In this article, the symbols $c$ and $C$ denote positive constants which may vary from line to line. These constants depend mainly on three parameters which are fixed through the proofs: the dimension of the space $d$, the ellipticity $\lambda$ and the probability $\p = \P \left[ \a(e) \neq 0 \right]$. Usually, we use $C$ for large constants (whose value is expected to belong to the interval $[1, \infty)$) and $c$ for small constants (whose value is expected to be in the interval $(0,1]$). 

For the stochastic integrability, we use the letter $s$ and will typically have inequalities of the form $X \leq \O_s(C)$. This exponent $s$ depends on the parameters $d , \lambda$ and $\p$. Its value can also vary from line to line and is expected to be small.

In Sections~\ref{section4} and~\ref{section5}, another parameter will be involved in the dependence of the constants and exponents: the spatial integrability $q \in (2, \infty)$ (see Theorem~\ref{mainthm} below). The dependence in this additional parameter will be displayed thanks to the following convention: we write $C := C(d,\lambda, \p) < \infty$ (resp. $C := C(d,\lambda, \p, q) < \infty$) to mean that the constant $C$ depends only on the parameters $d,\lambda, \p$ (resp. $d,\lambda, \p, q$) and that its value is expected to be large. For small constants or exponents we use the notations $c := c(d , \lambda, \p) > 0$, $s := s(d , \lambda, \p) > 0$ (resp. $c := c(d , \lambda, \p, q) > 0$, $s := s(d , \lambda, \p,q) > 0$).

\subsection{Outline of the paper} The rest of the paper is organized as follows. In Section~\ref{section2}, we recall (mostly without proof) some properties of the infinite cluster which were stated and proved in~\cite{AD2} (and based on~\cite{PP96}) to develop a quantitative homogenization theory on the infinite percolation cluster. In Sections~\ref{subsecspectral} and~\ref{subsecpoincar}, we state the concentration inequality and the multiscale Poincar\'e inequality, which are the two key ingredients in the proof of Theorem~\ref{mainthm}. In Section~\ref{section3}, we use the concentration inequality and the properties of the infinite cluster recorded in Section~\ref{section2} to obtain an estimate on the spatial averages of the corrector. In Section~\ref{section4}, we use the result established in Section~\ref{section3} combined with the multiscale Poincar\'e inequality to prove the optimal $L^q$-bound on the gradient of the corrector, stated in the following theorem.

\begin{theorem}[Optimal $L^q$ estimates for first-order corrector] \label{mainthm}
\begin{sloppypar} For each $q \geq 2$, there exist an exponent $s := s(d, \p , \lambda) > 0$ and a constant $C(d, \p, \lambda, q) < \infty$ such that for each radius $R \geq 1$ and each $p \in \Rd$,\end{sloppypar}
\begin{equation} \label{e.mainthmbis}
\left( R^{-d} \sum_{x \in \C_\infty \cap B_R} \left| \chi_p(x) - \left( \chi_p \right)_{\C_\infty \cap B_R} \right|^q  \right)^\frac 1q \leq  \left\{
  \begin{array}{lcl}
    \O_s \left( C |p| \log^\frac12 R \right) & \mbox{if } d=2, \\
     \O_s \left( C|p| \right) & \mbox{if } d\geq 3. \\
  \end{array}
\right.
\end{equation}
\end{theorem}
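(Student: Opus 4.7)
The plan is to combine the multiscale Poincar\'e inequality (Proposition~\ref{multiscalepoincare}) with the concentration inequality (Proposition~\ref{resampling}), following the scheme initiated by Gloria--Neukamm--Otto and adapted to the cluster setting through the large-scale regularity estimates of~\cite{AD2}.

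First, I would apply the multiscale Poincar\'e inequality along the triadic scales $r = 3^n$, $0 \le n \le \lfloor \log_3 R \rfloor$, to reduce the estimate~\eqref{e.mainthmbis} to $L^q$-averaged bounds on the spatial averages $(\nabla \chi_p)_{B_{3^n}(x)}$ of the gradient of the corrector. Schematically this produces
\begin{equation*}
\bigl\| \chi_p - (\chi_p)_{\C_\infty \cap B_R} \bigr\|_{\underline{L}^q(\C_\infty \cap B_R)}^2
\lesssim \sum_{n=0}^{\lfloor \log_3 R \rfloor} 3^{2n} \, \bigl\| (\nabla \chi_p)_{B_{3^n}(\cdot)} \bigr\|_{\underline{L}^q(\C_\infty \cap B_R)}^2,
\end{equation*}
so that the dimension dichotomy will emerge as soon as the $n$-th term decays like $3^{-nd}$: in $d \ge 3$ the geometric series $\sum_n 3^{n(2-d)}$ converges, while in $d = 2$ it contributes $\log R$, which after taking a square root produces precisely the right-hand side of~\eqref{e.mainthmbis}.

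The heart of the argument is then to show, for each scale $r = 3^n$ and each suitable test function $\phi$ supported in $B_r(x)$, a CLT-type bound of the form
\begin{equation*}
\langle \nabla \chi_p, \phi \rangle_{\C_\infty} \le \O_s\bigl( C |p| \, r^{-d/2} \bigr).
\end{equation*}
Since the expectation of the left-hand side vanishes by the stationarity~\eqref{e.stat.corr} of $\a$, its fluctuations can be estimated via Proposition~\ref{resampling}. This requires controlling the vertical derivative of the functional $\a \mapsto \langle \nabla \chi_p, \phi \rangle_{\C_\infty}$ under resampling a single bond $e$: differentiating the corrector equation~\eqref{e.eq} shows that this derivative factors essentially as $|\nabla \chi_p + p|(x_e) \cdot |\nabla w_\phi|(x_e)$, where $w_\phi$ solves the dual problem $-\nabla \cdot \a \nabla w_\phi = \nabla \cdot \phi$ on $\C_\infty$. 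The required $\ell^2$-summability of these contributions then follows from Green's-function and large-scale $C^{0,1}$ estimates for $w_\phi$ coming from~\cite{AD2}, combined with the a priori sublinear bounds~\eqref{e.infinitenormcorr}--\eqref{minscale.sublin} on $\chi_p$.

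The main obstacle is this last step on the percolation cluster, for two intertwined reasons. First, the vertical derivative is \emph{nonlinear} in $\a$, being a product of gradients of two solutions; extracting the sharp stochastic integrability therefore calls for a bootstrap, in which the qualitative sublinearity~\eqref{e.infinitenormcorr} is plugged into the concentration inequality to obtain a first quantitative gain that is then iterated. Second, Green's-function type estimates on $\C_\infty$ only hold on the ``good'' renormalized triadic cubes of~\cite{AD2}, so contributions from edges lying in badly-behaved regions must be absorbed using the exponential tails of the associated minimal scale together with the $\O_s$-calculus~\eqref{e.Oavgs} and~\eqref{e.Oprods}. Once the spatial-average bound is in place, upgrading from $L^2$ to $L^q$ produces the polynomial factor $q^k$ in~\eqref{e.mainthmbis}, and substitution into the multiscale Poincar\'e bound completes the proof.
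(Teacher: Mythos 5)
Your high-level architecture---multiscale Poincar\'e inequality to reduce to spatial averages of $\nabla\chi_p$, then the Efron--Stein-type concentration inequality to bound those averages, with the dimension dichotomy coming from $\sum_n 3^{n(2-d)}$---matches the paper's approach (Sections~\ref{section3} and~\ref{section4}). However, there are several genuine gaps. The most serious is that you write the multiscale Poincar\'e inequality directly for $\chi_p$ on $\C_\infty\cap B_R$; Proposition~\ref{multiscalepoincare} is a statement about functions on $\Rd$, and the paper spends real effort constructing the coarsened, mollified extension $[\chi_p]_\Pa^\eta$ (Definition~\ref{d.regularisation}, Propositions~\ref{l.coarseLs}--\ref{l.coarsegrads}) so that the inequality can be applied, and then removing the coarsening at the end (Step~2 of the proof of Theorem~\ref{mainthm}). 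Without this device the schematic inequality you write does not hold on the cluster.

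A second gap is your claim that $\E\bigl[\langle\nabla\chi_p,\phi\rangle\bigr]=0$ ``by stationarity.'' Because the quantity actually fed into Proposition~\ref{resampling} is built from the coarsened corrector and the random partition $\Pa$ is \emph{not} stationary, the expectation does not vanish; the paper's Result~1 in the proof of Proposition~\ref{boundspatialaveragecorrector} is a three-step argument (comparison with a $3^k\Zd$-stationary partition $\Pa_{\mathrm{stat}}^k$, a sublinearity argument, and a Fourier computation) only to show $|\E[X]|\le CR^{-d/2}$. Third, you call for a bootstrap, iterating qualitative sublinearity through the concentration inequality; the paper does no such iteration. The pointwise bound $|\nabla\chi_p|(x)\indc_{\{x\in\C_\infty\}}\le\O_s(C)$ follows directly from the Caccioppoli inequality and the large-scale $C^{0,1}$ regularity of~\cite{AD2} (Proposition~\ref{p.regularitytheory}), which is already quantitative. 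Finally, you omit the role of the Meyers estimate (Proposition~\ref{meyercorrector}), the minimal-scale Lemma~\ref{minscalecorrector}, and the auxiliary partition $\mathcal{U}$ (Definition~\ref{def.partitionU}), which are precisely what allows the variance estimate to absorb the random partition-size weights appearing after the Green's-function representation.
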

This theorem is strictly weaker than Theorem~\ref{realmainthm}; in Section~\ref{section5} we upgrade the previous $L^q$ bound into the $L^\infty$ bound stated in Theorem~\ref{realmainthm}. In Appendix~\ref{appA}, we give a proof of the multiscale Poincar\'e inequality stated in Section~\ref{subsecpoincar}. In Appendix~\ref{appB}, we give the proof of a technical lemma used in Section~\ref{section3}.

\bigskip

\noindent \textbf{Acknowledgement.} I would like to thank Scott Armstrong, Jean-Christophe Mourrat and Chenlin Gu for helpful discussions and comments.

\section{Preliminaries} \label{section2}
In this section we record some properties about the infinite percolation cluster in the supercritical regime. Most of these properties were established in~\cite{AD2}.

\subsection{The corrector : Existence and first properties}

Denote by $\mathcal{A}_1$ the (random) vector space of $\a$-harmonic functions with at most linear growth, i.e.,
\begin{equation*}
\mathcal{A}_1 := \left\{ u : \C_\infty \rightarrow \R~ :~ \nabla \cdot \left( \a \nabla u \right) = 0 ~ \mbox{in}~ \C_\infty~ \mbox{and} ~ \lim\limits_{R  \rightarrow \infty} \frac{1}{R^{ 2}} \left\| u \right\|_{\underline{L}^2 \left(\C_\infty \cap B_R \right)}= 0 \right\}
\end{equation*}

By~\cite{BDKY}, we know that, $\P$-almost surely, the space $\mathcal{A}_1$ has dimension $(d+1)$ and that every function $u \in \mathcal{A}_1$ is close to an affine function. More precisely, the space $\mathcal{A}_1$ can be characterized as follows: there exist a collection of sublinear functions $\left\{ \chi_p \right\}_{p \in \Rd}$ defined on the infinite cluster and valued in $\R$ such that
\begin{equation*}
    \mathcal{A}_1 := \left\{ l_p  + \chi_p + c \, : \, p \in \Rd , c \in \R  \right\}.
\end{equation*}
The functions $\left\{ \chi_p \right\}_{p \in \Rd}$ are called the correctors. They are defined up to a constant and are unique. To work with these quantities, one has to be careful to only consider quantities which are invariant by adding a constant, such as the oscillation, the gradient, the difference $\chi_p(x) - \chi_p(y)$, etc. For later use, we record that the map $p \mapsto \nabla \chi_p$ is linear.

The sublinear growth of the corrector is an important property which was proved qualitatively in~\cite{de1989invariance} and quantitatively in~\cite{AD2}; by~\cite[(1.22)]{AD2}, there exist two exponents 
$\delta := \delta (d , \p , \lambda) > 0$, $s := s (d , \p , \lambda) > 0$ and a constant $C := C(d, \p, \lambda)$ such that, for each radius $R \geq 1$,
\begin{equation} \label{e.infinitenormcorr}
\osc_{ \C_\infty \cap B_R} \chi_p \leq \O_s \left( C |p| R^{1-\delta} \right).
\end{equation}

We may reformulate this property in terms of a minimal scale: by~\cite[(1.18)]{AD2}, there exists a non-negative random variable $\X$ satisfying $\X \leq \O_s(C),$ such that for each vector $p \in \Rd$ and each radius $R \geq \X$,
\begin{equation} \label{minscale.sublin}
\left\| \chi_p - \left( \chi_p \right)_{\C_\infty \cap B_R }  \right\|_{\underline{L}^2 \left( \C_\infty \cap B_R \right)}  \leq C |p| R^{1-\delta}.
\end{equation}
Moreover, the corrector satisfies the following stationarity property: for each $x,y \in \Zd$, each $p \in \Rd$ and each $z \in \Zd$,
\begin{equation} \label{e.stat.corr}
\left( \chi_p (x) - \chi_p(y) \right) \indc_{\{ x,y \in \C_\infty \} } (\a) = \left( \chi_p (x+z) - \chi_p(y+z) \right) \indc_{\{ z+ x, z+ y \in \C_\infty \} } (\tau_z \a).
\end{equation}

\medskip

\subsection{Triadic partitions of good cubes}
This section shows how to use the tools developed by Penrose and Pisztora~\cite{PP96} to obtain a renormalization structure of the infinite cluster of supercritical percolation.
\subsubsection{A general scheme for partition of good cubes}
The construction of the partition is accomplished by a stopping time argument reminiscent of a Calder\'on-Zygmund-type decomposition. We are given a notion of ``good cube" represented by an $\F$-measurable function which maps $\Omega$ into the set of all subsets of $\T$. In order words, for each environment $\a\in\Omega$, we are given a subcollection $\mathcal{G}(\a) \subseteq \T$ of triadic cubes. We think of $\cu\in\T$ as being a good cube if $\cu\in\mathcal{G}(\a)$. We frequently drop the dependence in~$\a$ and write $\mathcal{G}$ instead of $\mathcal{G}(\a)$. 

\begin{proposition}[Partition of good cubes, Proposition 2.1 of~\cite{AD2}]
\label{p.partitions}
Let $\G \subseteq \T$ be a random collection of triadic cubes, as above. Suppose that there exist constants $K,s>0$ such that
\begin{equation*}
\sup_{z\in 3^n\Zd} \P \left[ z+ \cu_n \not\in\G \right] \leq K \exp\left( -K^{-1}3^{ns} \right).
\end{equation*}
Then, $\P$--almost surely, there exists a partition $\S\subseteq \T$ of $\Zd$ into triadic cubes with the following properties:
\begin{enumerate}

\item[(i)] All predecessors of elements of $\S$ are good: for every $\cu,\cu'\in\T$, 
\begin{equation*}
\cu' \subseteq \cu \ \mbox{and} \ \cu'\in\S \implies \cu \in \G.
\end{equation*}

\item[(ii)] Neighboring elements of $\S$ have comparable sizes: for every $\cu,\cu'\in \S$ such that $\dist(\cu,\cu') \leq 1$, we have
\begin{equation*}
\frac13 \leq \frac{\size(\cu')}{\size(\cu)} \leq 3.
\end{equation*}

\item[(iii)] Estimate for the coarseness of $\S$: if we denote by $\cu_\S(x)$ the unique element of $\S$ containing a point $x\in\Zd$, then there exists $C(s,K,d)<\infty$ such that
\begin{equation*}
\size\left( \cu_\S(x) \right) \leq \O_s (C). 
\end{equation*}
\end{enumerate}
In addition, if one has the following independence property,  for every cube $\cu = z + \cu_n \in \T$,
\begin{equation}
\label{e.goodlocal}
\mbox{the event $\left\{ \cu \not\in\G \right\}$ is $\F(z + \cu_{n+1})$-measurable,}
\end{equation}
then one has the following minimal scale property:

\begin{enumerate}
\item[(iv)] Minimal scale for $\S$. For each $t \in [1, \infty )$, there exists $C := C(t,s,K,d) < \infty$, an $\N$-valued random variable $\M_t (\S)$ and exponent $r := r(t,s,K,d) > 0$ such that
\begin{equation*}
\M_t(\S) \leq \O_r(C)
\end{equation*}
and for each integer $m \in \N$ satisfying $3^m \geq \M_t(\S)$,
\begin{equation*}
\frac{1}{|\cu_m|}\sum_{x \in \cu_m} \size \left( \cu_{\S} (x) \right)^t \leq C \quad \mbox{ and } \quad \sup_{x \in \cu_m} \size \left( \cu_{\S} (x) \right) \leq 3^{\frac{dm}{d+t}}.
\end{equation*}
\end{enumerate}
\end{proposition}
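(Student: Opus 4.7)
My plan is to construct the partition $\S$ via a Calder\'on--Zygmund-type stopping-time argument on the triadic tree. For each $x\in\Zd$, I would define $n(x)$ to be the smallest integer $n\geq 0$ such that every triadic cube $\cu\in\T_m$ of scale $m\geq n$ lying within a suitably enlarged $\ell_\infty$-neighborhood (say $\dist(\cu,\cu_m(x)) \leq 2\cdot 3^m$) of $\cu_m(x)$---the unique element of $\T_m$ containing $x$---belongs to $\G$. The nested structure of triadic cubes and the monotonicity of this condition force $n(\cdot)$ to be constant on $\cu_{n(x)}(x)$, so setting $\cu_\S(x) := \cu_{n(x)}(x)$ and $\S := \{\cu_\S(x) : x\in\Zd\}$ yields a well-defined partition of $\Zd$.

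Properties (i) and (ii) are then essentially immediate. For (i), every predecessor of $\cu_\S(x)\in\T_{n(x)}$ lies at scale $\geq n(x)+1$ and is in particular $\cu_m(x)$ for some such $m$, hence in $\G$ by the defining condition. For (ii), I would argue by contradiction: if two adjacent cubes $\cu,\cu'\in\S$ satisfied $\size(\cu') > 3\size(\cu)$, then the enlarged-neighborhood condition at the smaller scale applied at a point of $\cu$ (propagated to a nearby point of $\cu'$ via the triangle inequality on cube centers) would force $\cu'$ to have been chosen at a strictly smaller scale than its actual one. For (iii), I would apply the union bound: since the cardinality of $N_m(x) := \{\cu \in \T_m : \dist(\cu,\cu_m(x)) \leq 2 \cdot 3^m\}$ is bounded uniformly in $m$,
\[
\P[n(x) \geq n] \leq \sum_{m \geq n} |N_m(x)| \cdot K\exp(-K^{-1}3^{ms}) \leq C\exp(-C^{-1}3^{ns}),
\]
which converts directly to the stretched-exponential bound $\size(\cu_\S(x)) = 3^{n(x)} \leq \O_s(C)$.

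Property (iv) will be the main obstacle. The pointwise bound $\sup_{x\in\cu_m}\size(\cu_\S(x)) \leq 3^{dm/(d+t)}$ follows from a union bound over the $3^{dm}$ points of $\cu_m$ combined with the tail estimate from~(iii), with the exponent $dm/(d+t)$ chosen precisely to balance the volume $3^{dm}$ against the stretched-exponential weight. The average bound is more subtle: by~(iii) and stationarity, $\E[\size(\cu_\S(x))^t]$ is bounded by a constant $C(t,s,K,d)$, and the task is to promote this expectation to concentration of the empirical mean on sufficiently large $\cu_m$. The key input here is that $\size(\cu_\S(x))$ depends on the environment only in a ball of radius comparable to $\size(\cu_\S(x))$ around $x$, a consequence of the measurability hypothesis~\eqref{e.goodlocal}; hence, well-separated points whose associated cubes are not too large define independent contributions. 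I would decompose $\sum_{x\in\cu_m}\size(\cu_\S(x))^t$ dyadically according to the value of $\size(\cu_\S(x))$, apply a Bernstein-type concentration inequality to each band using this approximate independence, and union-bound over scales $m$ to identify the minimal scale $\M_t(\S)$ above which concentration holds, yielding $\M_t(\S) \leq \O_r(C)$ for an appropriate $r > 0$. The delicate point is balancing the independence structure against the heavy-tailed contribution from rare very large cubes without losing the integrability needed for the $\O_r$ bound on $\M_t$.
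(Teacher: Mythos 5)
Your construction of $\S$ has a well-definedness gap. Defining $n(x)$ as the smallest $n$ such that, for every $m \geq n$, every cube of $\T_m$ within $\ell_\infty$-distance $2\cdot 3^m$ of $\cu_m(x)$ is good, and setting $\cu_\S(x) := \cu_{n(x)}(x)$, the collection $\S = \{\cu_\S(x) : x \in \Zd\}$ need not be a partition. One direction of your constancy claim does hold: for $y\in\cu_{n(x)}(x)$ one has $\cu_m(y)=\cu_m(x)$ at every scale $m\geq n(x)$, hence $n(y)\leq n(x)$. But the reverse inequality is not forced. Write $n_0:=n(x)$ and suppose $n_0\geq 1$; by minimality there is a bad cube $\cu_b\in\T_{n_0-1}$ with $\dist(\cu_b,\cu_{n_0-1}(x))\leq 2\cdot 3^{n_0-1}$. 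For $y\in\cu_{n_0}(x)$ with $\cu_{n_0-1}(y)\neq\cu_{n_0-1}(x)$, these two siblings can lie at $\ell_\infty$-distance of order $3^{n_0-1}$, and the triangle inequality then controls $\dist(\cu_b,\cu_{n_0-1}(y))$ only by roughly $4\cdot 3^{n_0-1}$, so $\cu_b$ need not lie in the $2\cdot 3^{n_0-1}$-neighborhood of $\cu_{n_0-1}(y)$, and nothing forces $n(y)\geq n_0$. If $n(y)<n_0$ then $\cu_\S(y)\subsetneq\cu_\S(x)$ with both in $\S$, so $\S$ is merely a nested cover. Enlarging the numerical constant in the neighborhood radius does not resolve this, because the triangle-inequality overhang is a fixed multiple of that radius; the obstruction is structural. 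Your sketch of (ii) runs into the same difficulty of comparing the neighborhood of $\cu_m(x)$ with that of $\cu_m(y)$. The construction in~\cite{AD2}, following the uniformly elliptic case, sidesteps this by attaching the goodness condition to each cube (and its predecessors) rather than to a basepoint inside it, taking maximal such cubes, and then enforcing (ii) by a further pruning pass so that neighbors have comparable scale.

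The remaining ingredients of your plan are sound. The union bound over $m\geq n$ and over the boundedly many neighboring cubes per scale gives the stretched-exponential tail in (iii), as you indicate. For (iv), you correctly identify that the locality hypothesis~\eqref{e.goodlocal} is exactly what is needed, and the paper underlines this by also recording the locality-free Proposition~\ref{p.partitions.nonlocal}, which drops only (iv). Your proposed argument---exploit independence across well-separated blocks, control the heavy tail separately, and read off a minimal scale---is a variant of the scheme the paper actually carries out in Appendix~\ref{appB} in the proof of Lemma~\ref{minscaleforminscale} (which the paper states is the same argument as Steps~1 and~2 of the proof of the cited proposition): there one truncates the random variable to a version measurable with respect to $\F$ of a bounded ball, applies a Chernoff/moment-generating-function bound to the resulting independent sum, bounds the truncation error by the tail estimate, and concludes via a minimal-scale lemma in the style of Lemma~\ref{lemmafromperc.0}.
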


\subsubsection{The partition $\Pa$ of well-connected cubes}

We apply the construction of the previous subsection to obtain a random partition~$\Pa$ of $\Zd$ which simplifies the geometry of the percolation cluster. This partition plays an important role in the rest of the paper. To obtain bounds on the ``good event" which allows us to construct the partition, we use the results of Pisztora~\cite{P96}, Penrose and Pisztora~\cite{PP96} and Antal and Pisztora~\cite{AP96}. We first recall some definitions introduced in those works. 

\begin{definition}[Crossability and crossing cluster]
We say that a cube $\cu$ is \emph{crossable} (with respect to an environment $\a \in\Omega$) if each of the $d$ pairs of opposite $(d-1)$--dimensional faces of the cube $\cu$ is joined by an open path in $\cu$. We say that a cluster~$\C \subseteq \cu$ is a \emph{crossing cluster for $\cu$} if $\C$ intersects each of the $(d-1)$--dimensional faces of $\cu$. 
\end{definition}

\begin{definition}[Good cube] \label{def.goodcube}
We say that a triadic cube $\cu\in\T$ is \emph{well-connected} if there exists a crossing cluster~$\C$ for the cube $\cu$ such that:
\begin{enumerate}

\item[(i)] Each cube $\cu'$ with $\size(\cu') \in \left[ \frac{1}{10} \size(\cu), \frac 12 \size(\cu) \right]$ and $\cu' \cap \frac 34 \cu \neq \emptyset$ is crossable;

\item[(ii)] Every path $\gamma\subseteq \cu'$ with $\diam(\gamma) \geq \frac{1}{10} \size(\cu)$ is connected to $\C$ within $\cu'$.

\end{enumerate}
We say that~$\cu\in\T$ is a \emph{good cube} if $\size(\cu) \geq 3$, $\cu$ is well-connected and each of the~$3^d$ successors of the cube~$\cu$ are well-connected. We say that $\cu\in\T$ is a \emph{bad cube} if it is not a good cube (see Figure~\ref{fig1}). 
\end{definition}

\begin{figure}
    \centering
    \includegraphics[scale= 0.6]{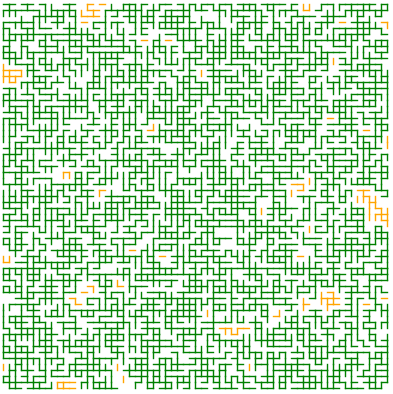}
    \caption{A good cube $\cu$. The cluster $\C_*(\cu)$ is drawn in green. Simulation by C.Gu}
    \label{fig1}
\end{figure}

The following estimate on the probability of the cube $\cu_n$ being good is a consequence~\cite[Theorem 3.2]{P96} and~\cite[Theorem 5]{PP96}, as recalled in~\cite[(2.24)]{AP96}.

\begin{lemma}[{\cite[(2.24)]{AP96}}]
\label{l.AP}
For each probability $\p \in (\p_c , 1 ]$, there exists a constant $C(d,\p)<\infty$ such that, for every integer $n\in\N$, 
\begin{equation} \label{e.pgoodness}
\P \left[ \cu_n \ \mbox{is good} \right] \geq 1 - C \exp\left( - C^{-1}3^{n} \right).
\end{equation}
\end{lemma}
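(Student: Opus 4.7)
The plan is to reduce the claim to the sharp exponential estimates of Pisztora and of Penrose--Pisztora on crossability, together with the Antal--Pisztora connectivity lemma, and then close the argument by a union bound over a polynomial (in $3^n$) family of sub-cubes. More precisely, a cube $\cu$ is \emph{good} if (a) $\cu$ itself and its $3^d$ triadic successors are well-connected, and (b) for each such ancestor/successor, every sub-cube $\cu' \subseteq \cu$ with $\size(\cu') \in [\tfrac{1}{10}\size(\cu), \tfrac12 \size(\cu)]$ meeting $\tfrac34 \cu$ is crossable, and every path in $\cu'$ of diameter at least $\tfrac{1}{10}\size(\cu)$ is hooked up inside $\cu'$ to the crossing cluster $\C$ of $\cu$. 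So the proof reduces to giving exponential bounds on the complement of each of these events and summing them.

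First I would invoke the renormalization statement of Pisztora~\cite{P}: in the supercritical regime $\p > \pc(d)$, there exists $c(d,\p) > 0$ such that for every cube $\cu$ of side $N$,
\begin{equation*}
\P\bigl[\cu \text{ is not crossable}\bigr] \;\leq\; C \exp\!\bigl(-c\, N\bigr),
\end{equation*}
with an analogous statement for the existence of a crossing cluster (Penrose--Pisztora~\cite{PP}) and for uniqueness/stability of that cluster. Together they give that the probability that $\cu_n$ fails to contain a unique crossing cluster $\C$, to which all sub-cubes of side $\asymp 3^n$ meeting $\tfrac34 \cu_n$ are well attached, is at most $C \exp(-C^{-1} 3^n)$. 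This is essentially the content of~\cite[(2.24)]{AP}; I would therefore follow the Antal--Pisztora scheme to control the event that some large-diameter path in a sub-cube $\cu'$ fails to connect to $\C$ within $\cu'$.

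Second, I would take a union bound. The sub-cubes $\cu'$ that must be checked for crossability and for connectivity to $\C$ have sizes bounded between $\tfrac{1}{10} 3^n$ and $\tfrac12 3^n$, and centers lying in $\tfrac34 \cu_n \cap \Zd$. Thus there are at most $C (3^n)^d$ such sub-cubes. Each of them fails the crossability or path-connection requirement with probability at most $C \exp(-c \, 3^n)$ by the estimates of the previous paragraph. Since the exponential decay rate is linear in the side length $3^n$ whereas the combinatorial count is only polynomial in $3^n$, the union bound yields
\begin{equation*}
\P\bigl[\cu_n \text{ is not well-connected}\bigr] \;\leq\; C \, 3^{dn} \exp\!\bigl(-c\, 3^n\bigr) \;\leq\; C \exp\!\bigl(-C^{-1} 3^n\bigr),
\end{equation*}
after adjusting constants. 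Repeating the same bound for each of the $3^d$ successors of $\cu_n$ (all of side $3^{n-1}$) and taking a further union bound preserves the form of the estimate, giving the claim.

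The only delicate step is the connectivity statement: showing that every sufficiently long path inside a sub-cube $\cu'$ attaches to the crossing cluster $\C$ of $\cu$ within $\cu'$, with an exponential bound on the failure probability. This is precisely the combinatorial/geometric core of Antal--Pisztora~\cite{AP}, and I would simply quote it; the rest of the argument is a clean union bound. The main obstacle is therefore not a new calculation but keeping track that the exponential rate $c \cdot 3^n$ coming from Pisztora's renormalization beats the polynomial-in-$3^n$ combinatorial factor in the union bound, which here it comfortably does.
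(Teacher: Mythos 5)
The paper does not actually give a proof of this lemma: it is cited verbatim as~\cite[(2.24)]{AP}, which Antal--Pisztora in turn deduce from Pisztora~\cite[Theorem 3.2]{P} and Penrose--Pisztora~\cite[Theorem 5]{PP}. Your proposal is a sensible reconstruction of that deduction, and it is in the same spirit as the citation chain the paper relies on: invoke the exponential bounds of~\cite{P} and~\cite{PP} on crossability and on the existence and stability of the crossing cluster, quote the path-connectivity estimate of~\cite{AP} (which you correctly flag as the one genuinely delicate step), and then observe that the family of sub-cubes $\cu'$ appearing in Definition~\ref{def.goodcube} has cardinality polynomial in $3^n$ (say at most $C 3^{n(d+1)}$, accounting for both admissible centers and admissible sizes), while each sub-cube has side at least $\frac{1}{10}3^n$, so the per-cube failure probability $C\exp(-c\,3^n/10)$ comfortably absorbs the polynomial combinatorial factor. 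You also correctly note that the $3^d$ successors, of side $3^{n-1}$, contribute an error of the same exponential order, so the final union bound retains the form $C\exp(-C^{-1}3^n)$. There is no gap in the plan; the only remark worth making is that, like the paper, you ultimately defer the geometric core of the connectivity statement (attaching long paths to the crossing cluster inside $\cu'$) to~\cite{AP} rather than reproving it, which is the appropriate level of detail here.
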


It follows from Definition~\ref{def.goodcube} that, for every good cube $\cu$, there exists a unique maximal crossing cluster for $\cu$ which is contained in $\cu$. We denote this cluster by $\C_*(\cu)$. In the next lemma, we record the observation that adjacent triadic cubes which have similar sizes and are both good have connected clusters. 

\begin{lemma}[Lemma 2.8 of~\cite{AD2}]
\label{l.connectivity}
Let $n,n'\in\N$ with $|n-n'|\leq 1$ and $z,z'\in 3^n\Zd$ such that 
\begin{equation*} \label{}
\dist\left(\cu_{n}(z), \cu_{n'}(z') \right) \leq1.
\end{equation*}
Suppose also that $\cu_{n}(z)$ and $\cu_{n'}(z')$ are good cubes. Then there exists a cluster~$\C$ such that 
\begin{equation*} \label{}
\C_*( \cu_{n}(z) ) \cup \C_*( \cu_{n'}(z') )\subseteq  \C \subseteq \cu_{n}(z) \cup \cu_{n'}(z').
\end{equation*}
\end{lemma}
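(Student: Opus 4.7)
The plan is to construct a (not necessarily triadic) ``bridge'' cube $\cu''$ straddling the interface between $\cu_n(z)$ and $\cu_{n'}(z')$ to which the well-connectedness properties of Definition~\ref{def.goodcube} can be applied simultaneously from the perspective of both good cubes. Without loss of generality I assume $n' \geq n$, so $n' - n \in \{0,1\}$.

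First I would pick a cube $\cu''$ of side length of order $3^n/2$, positioned near the interface, so that $\cu'' \subseteq \cu_n(z) \cup \cu_{n'}(z')$; so that both $\cu'' \cap \tfrac{3}{4}\cu_n(z)$ and $\cu'' \cap \tfrac{3}{4}\cu_{n'}(z')$ are non-empty; and so that $\size(\cu'') \in [\tfrac{1}{10}\size(\cu), \tfrac{1}{2}\size(\cu)]$ for both $\cu \in \{\cu_n(z),\cu_{n'}(z')\}$. A short geometric check confirms the existence of such a $\cu''$: when $n = n'$, center $\cu''$ on the face separating the two cubes; when $n' = n+1$, the common admissible size window $[\tfrac{3^{n+1}}{10}, \tfrac{3^n}{2}]$ is non-empty and leaves enough room for $\cu''$ to reach into the $\tfrac{3}{4}$-interior of the larger cube.

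Once $\cu''$ is in place, the rest is a direct reading of Definition~\ref{def.goodcube}. Property (i), applied with $\cu' = \cu''$ from the perspective of each of $\cu_n(z)$ and $\cu_{n'}(z')$, yields that $\cu''$ is crossable, so it contains an $\a$-open connected subgraph $\C''$ with $\diam(\C'') \geq \size(\cu'')$, hence $\geq \tfrac{1}{10}\size(\cu_n(z))$ and also $\geq \tfrac{1}{10}\size(\cu_{n'}(z'))$. Property (ii), again applied with $\cu' = \cu''$ from each perspective, then produces open paths inside $\cu''$ connecting $\C''$ to $\C_*(\cu_n(z))$ and $\C''$ to $\C_*(\cu_{n'}(z'))$. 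Setting $\C := \C_*(\cu_n(z)) \cup \C'' \cup \C_*(\cu_{n'}(z'))$, I obtain a single $\a$-open cluster contained in $\cu_n(z) \cup \cu_{n'}(z')$, which is the desired conclusion.

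The main obstacle is the geometric construction of $\cu''$ in the mismatched-size case $n' = n+1$ with $\dist(\cu_n(z),\cu_{n'}(z')) = 1$: the size window from property (i) is tight, and one must check that a cube of side $\approx 3^n/2$ can simultaneously satisfy all the constraints for every admissible placement of the smaller cube against a face of the larger. A cleaner fallback, should the direct construction get unwieldy, is to reduce to the equal-size case by bridging through a successor $\cu'$ of $\cu_{n'}(z')$ adjacent to $\cu_n(z)$: since $\cu_{n'}(z')$ is good, the successor $\cu'$ is well-connected and has size $3^n$; the equal-size version of the argument connects $\C_*(\cu_n(z))$ to $\C_*(\cu')$, while property (ii) of $\cu_{n'}(z')$ applied with $\cu' = \cu'$ (whose size $3^n = \tfrac{1}{3}\size(\cu_{n'}(z'))$ lies in the admissible window) connects $\C_*(\cu')$ to $\C_*(\cu_{n'}(z'))$.
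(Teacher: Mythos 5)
Your bridge-cube strategy is the right idea for cubes that meet along a $(d-1)$-dimensional face, but there is a gap you have not addressed: the hypothesis $\dist(\cu_n(z),\cu_{n'}(z'))\le 1$ is in the $\ell_\infty$ metric, so it also allows pairs of cubes that meet only along a lower-dimensional corner or edge, at $\ell_1$-distance $\ge 2$. In that situation no bridge cube can exist: any cube $\cu''$ containing a vertex of $\cu_n(z)$ and a vertex of $\cu_{n'}(z')$ also contains lattice points in the ``rectangular hull'' of those two vertices, which lies outside $\cu_n(z)\cup\cu_{n'}(z')$. Worse, the conclusion of the lemma is itself unachievable there, since the induced subgraph of $\Zd$ on $\cu_n(z)\cup\cu_{n'}(z')$ is disconnected (no $\Zd$-edge joins the two boxes), so no open path inside the union can link the two crossing clusters. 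Your proof, like the lemma statement, implicitly assumes face-adjacency; you should say so explicitly (this is the only case used downstream, since the connectivity of $\bigcup_{\cu\in\Pa}\C_*(\cu)$ is propagated along face-adjacent pairs).

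On the mismatched-size case, your worry is well-founded and the ``direct'' construction does not in fact close. With $n'=n+1$, the admissible window is $\size(\cu'')\in[\tfrac{3}{10}3^n,\tfrac12 3^n]$, but for $\cu''$ to reach $\tfrac34\cu_{n'}(z')$ it must penetrate $\cu_{n'}(z')$ by roughly $\tfrac18 3^{n+1}=\tfrac38 3^n$, while reaching $\tfrac34\cu_n(z)$ requires penetrating $\cu_n(z)$ by roughly $\tfrac18 3^n$; together this forces $\size(\cu'')\ge\tfrac12 3^n$ with equality, leaving no slack for the integer rounding. So the successor route is not a fallback but the actual argument. Along that route, note two points: the equal-size bridge step only requires $\cu_n(z)$ and the successor to be \emph{well-connected}, not good, which is precisely what the definition of a good cube supplies for the successor; and when you invoke property (ii) of $\cu_{n'}(z')$ with the successor as the test cube, you should verify that it intersects $\tfrac34\cu_{n'}(z')$ and that a crossing path of $\C_*(\cu')$ has diameter at least $\tfrac{1}{10}\size(\cu_{n'}(z'))$ -- both hold, but they need to be stated. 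Finally, a small slip in the write-up: the cluster $\C$ you produce must include the connecting paths supplied by property (ii), not just $\C_*(\cu_n(z))\cup\C''\cup\C_*(\cu_{n'}(z'))$.
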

We next define the partition~$\Pa$ of good cubes.

\begin{definition}
\label{d.Pa}
We let $\Pa\subset \T$ be the partition~$\S$ of $\Zd$ obtained by applying Proposition~\ref{p.partitions} to the collection
\begin{equation*} \label{}
\G:= \left\{ \cu\in \T\,:\, \cu \ \mbox{ is good} \right\}.
\end{equation*}
More generally, for each point $y \in \Zd$, we let $\Pa_y\subset \T$ be the partition~$\S$ of $\Zd$ obtained by applying Proposition~\ref{p.partitions} to the collection
\begin{equation*} \label{}
\G:= \left\{ y + \cu:\, \cu \in \T\, \mbox{and} \, y + \cu \mbox{ is good} \right\}.
\end{equation*}
From the construction of $\Pa$ and $\Pa_y$, we also have
\begin{equation*}
\Pa_y = y + \Pa(\tau_{-y} \a) = \left\{ y + \cu \, : \, \cu \in \Pa(\tau_{-y} \a) \right\}.
\end{equation*}
We refer to Figure~\ref{fig2} for an illustration of the random partition~$\Pa$.
\end{definition}

\begin{figure}
    \centering
    \includegraphics[scale= 0.7]{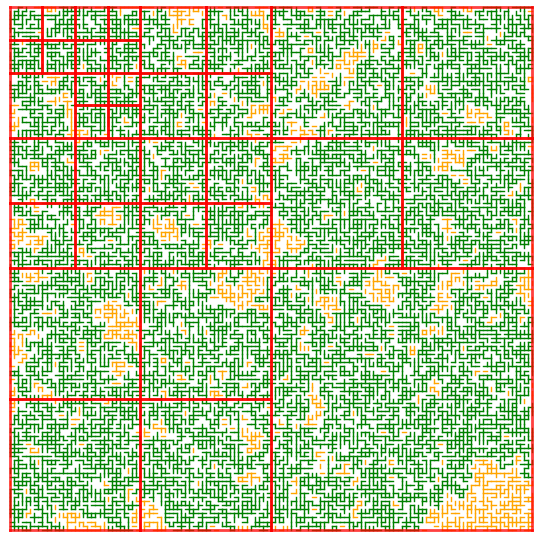}
    \caption{A realization of the partition $\Pa$ in a cube $\cu$. The cluster $\C_*(\cu)$ is drawn in Green; the clusters in yellow are the small isolated clusters. Simulation by C. Gu.}
    \label{fig2}
\end{figure}

The (random) partition $\Pa$ plays an important role throughout the rest of the paper. We denote by $\Pas$ the collection of triadic cubes which contains some elements of $\Pa$, that is
\begin{equation*} \label{}
\Pas:= \left\{ \cu\,:\,  \mbox{$\cu$ is a triadic cube and $\cu \supseteq \cu'$ for some $\cu'\in \Pa$} \right\}.
\end{equation*}
Notice that every element of $\Pas$ can be written in a unique way as a disjoint union of elements of $\Pa$.
According to Proposition~\ref{p.partitions}(i), every triadic cube containing an element of $\Pa$ is good. By Proposition~\ref{p.partitions}(iii) and Lemma~\ref{l.AP}, there exists $C(d,\p)<\infty$ such that, for every $x\in\Zd$,
\begin{equation}
\label{e.partitionO1}
\size\left( \cu_{\Pa}(x) \right) \leq \O_1(C).
\end{equation}
By the properties of the partition~$\Pa$ given in Proposition~\ref{p.partitions}(i) and (ii) and Lemma~\ref{l.connectivity}, the maximal crossing cluster $\C_*(\cu)$ of an element $\cu\in \Pas$ must satisfy $\C_*(\cu) \subseteq \C_\infty$, since the union of all crossing clusters of the elements of $\Pa$ is unbounded and connected. Notice also that, although we may not have $\C_*(\cu) = \C_\infty \cap \cu$, by definition of the partition $\Pa$ and Property~(ii) of Definition~\ref{def.goodcube}, we have that, for every cube $\cu \in \Pa$, there exists a cluster $\C$ such that
\begin{equation} \label{cubeandneighbors}
\C_\infty \cap \cu \subseteq \C \subseteq \bigcup_{\cu' \in \Pa,~ \dist \left( \cu, \cu' \right) \leq 1} \cu'.
\end{equation} 
In other words, for any cube $\cu \in \Pa$ and every pair of points $x, y \in \C_\infty \cap \cu$, there exists a path connecting $x$ to $y$ which lies in the cube $\cu$ and in its neighbors.

It is also interesting to note that, for $m \in \N$ such that $3^m \geq \M_{2d}\left( \Pa \right)$, the sets $\C_*(\cu_m),$  $\C_\infty \cap \cu_m$ and $\cu_m$ have comparable sizes: there exists a constant $C := C(d , \p) < \infty$ such that
\begin{equation} \label{e.sizecluster}
C^{-1} |\cu_m| \leq \left|\C_*(\cu_m)\right| \leq\left|  \C_\infty \cap \cu_m \right| \leq \left| \cu_m \right|.
\end{equation}
This result is a consequence of the Cauchy-Schwarz inequality and the three identities, under the assumption $3^m \geq \M_{2d} \left(\Pa \right)$ (which implies in particular that the cube $\cu_m$ is good),
\begin{equation*}
\sum_{\cu \in \Pa , \cu \subseteq \cu_m} 1 \leq \C_*(\cu_m), \qquad \sum_{\cu \in \Pa , \cu \subseteq \cu_m} \size \left( \cu_{\Pa} \right)^d = \left| \cu_m \right| \quad \mbox{ and } \quad \sum_{\cu \in \Pa , \cu \subseteq \cu_m} \size \left( \cu_{\Pa} \right)^{2d} \leq C \left| \cu_m \right|.
\end{equation*}
The first inequality comes from the fact that each cube of the partition $\Pa$ contained in the cube $\cu_m$ must have non-empty intersection with the cluster $\C_*(\cu_m)$, the second one is the preservation of the volume and the third one is a consequence of the assumption $3^m \geq \M_{2d} \left(\Pa \right)$.

Given a cube $\cu\in \Pa$, we let $\zbar(\cu)$ denote the element of the cluster~$\C_*(\cu)$ which is closest to the point~$z$; if this point is not unique, we break ties by using the lexicographical order. 

\begin{definition} \label{d.coarsenedfnct}
Given a function $u : \C_\infty \rightarrow \R$, we define \emph{the coarsened function with respect to the partition~$\Pa$} to be
\begin{equation*}
 \left[ u \right]_{\Pa}   : \left\{  \begin{array}{lcl}
 \Zd & \to & \R, \\
 x & \mapsto & u \left( \zbar \left(\cu_\Pa(x) \right) \right). \\
\end{array} \right.
\end{equation*}
\end{definition}

The reason we use the coarsened function is that it is defined on the entire lattice $\Zd$ and not on the infinite cluster. This allows to make use of the simpler and more favorable geometric structure of $\Zd$. The price to pay is the difference between $u$ and $ \left[ u \right]_{\Pa}$; it depends on the coarseness of the partition $\Pa$ and the control one has on the gradient of the function $u$ in a way that is made precise in the following proposition. The dependence on the coarseness of $\Pa$ is present via the size of the cubes of the partition. We recall the notation $|F|(x)$ for a vector field $F$ introduced in~\eqref{e.magF}.

\begin{proposition}[Lemma 3.2 of \cite{AD2}]
\label{l.coarseLs}
For each triadic cube~$\cu \in \Pa_*$, each exponent $1\leq s<\infty$ and each function $w: \C_\infty \to \R$,
\begin{equation}
\label{e.coarseLs}
\sum_{x \in \C_* \left( \cu \right)} \left| w(x) - \left[ w \right]_\Pa\!(x) \right|^s \\
\leq C^s \sum_{x \in \C_*(\cu)}  \size(\cu_\Pa(x))^{sd} \left| \nabla w\right|^s(x) .
\end{equation}
More generally, for any family of disjoint cubes $\left\{ \cu_i \right\}_{i \in I} \in \left( \Pa_* \right)^I$, we have
\begin{equation} \label{e.coarseLsgen}
\sum_{ x\in \C_*\left( \cup_{i \in I} \cu_i \right)} \left| w(x) - \left[ w \right]_\Pa\!(x) \right|^s \\
\leq C^s \sum_{ x \in \C_*\left( \cup_{i \in I} \cu_i \right)}  \size(\cu_\Pa(x))^{sd} \left| \nabla w\right|^s(x) ,
\end{equation}
where $\C_*\left( \cup_{i \in I} \cu_i \right)$ denotes the union of the maximal clusters of each connected component of the set~$ \bigcup_{i \in I} \cu_i$.
\end{proposition}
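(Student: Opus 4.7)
\medskip

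\noindent\textbf{Proof proposal.} The plan is to reduce the claim to a deterministic path estimate, applied cube by cube. Fix $\cu \in \Pa$ and let $x \in \C_*(\cu)$. By Definition~\ref{d.coarsenedfnct}, $[w]_\Pa(x) = w(\bar z(\cu))$, and by~\eqref{cubeandneighbors} there exists a path $\gamma_x \subseteq \C_\infty$ connecting $x$ to $\bar z(\cu)$, contained in the union of $\cu$ with its neighbors in $\Pa$. Using Proposition~\ref{p.partitions}(ii), all these neighboring cubes have size within a factor $3$ of $\size(\cu)$, so the path is trapped in a region containing at most $C\size(\cu_\Pa(x))^d$ vertices, and in particular $|\gamma_x| \leq C\size(\cu_\Pa(x))^d$. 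A telescoping sum along $\gamma_x$ together with Jensen's inequality (in the form of the discrete power-mean inequality) then gives
\begin{equation*}
|w(x) - [w]_\Pa(x)|^s \leq |\gamma_x|^{s-1} \sum_{e \in \gamma_x} |\nabla w(e)|^s \leq C^s \size(\cu_\Pa(x))^{d(s-1)} \sum_{e \in \gamma_x} |\nabla w(e)|^s.
\end{equation*}

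The second step is a combinatorial exchange of summation. For each edge $e$, let $\cu_\Pa(e)$ be the element of $\Pa$ containing (either endpoint of) $e$. The path $\gamma_x$ uses the edge $e$ only if $\cu_\Pa(x)$ equals $\cu_\Pa(e)$ or a neighbor thereof; by (ii) of Proposition~\ref{p.partitions} again, the number of such $x \in \C_*(\cu)$ is bounded by $C\size(\cu_\Pa(e))^d$, and their cubes $\cu_\Pa(x)$ have size comparable to $\size(\cu_\Pa(e))$. Summing over $x \in \C_*(\cu)$ and switching the order of summation therefore yields
\begin{equation*}
\int_{\C_*(\cu)} |w(x) - [w]_\Pa(x)|^s\, dx \leq C^s \sum_{e \subseteq \cu \cup \text{neighbors}} \size(\cu_\Pa(e))^{ds}\, |\nabla w(e)|^s,
\end{equation*}
and rewriting the edge sum as an integral against $|\nabla w|(x)$ in the notation~\eqref{e.magF} gives~\eqref{e.coarseLs}.

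For the general statement~\eqref{e.coarseLsgen}, I would first reduce to a single connected component of $\bigcup_i \cu_i$, since both sides of the inequality are additive over connected components (and $\bar z$ is chosen within the component). On a single connected component, $\C_*\left(\bigcup_i \cu_i\right)$ is contained in $\C_\infty$ and every point can still be connected to the chosen representative through a path living in $\Pa$-cubes of comparable size; the argument above then proceeds verbatim, with $\cu_\Pa(x)$ ranging over the $\Pa$-cubes meeting the component rather than a single $\cu$.

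The main obstacle is bookkeeping, not analysis: one must verify that the path $\gamma_x$ can indeed be chosen inside the $\Pa$-neighborhood of $\cu_\Pa(x)$ (which is where~\eqref{cubeandneighbors} and the goodness of neighboring cubes from Lemma~\ref{l.connectivity} are used) and that the resulting edges are charged to the correct $\cu_\Pa(e)$ so that the combinatorial factor $\size(\cu_\Pa(x))^{d(s-1)} \cdot \size(\cu_\Pa(e))^d$ collapses exactly to $\size(\cu_\Pa(e))^{ds}$. The factor $\size^{sd}$ rather than $\size^s$ on the right-hand side is a direct consequence of the generous, volume-based length bound on $\gamma_x$, which is all one needs for the applications and avoids invoking any chemical-distance estimate.
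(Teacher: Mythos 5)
The statement you are proving is quoted verbatim from Lemma 3.2 of~\cite{AD2}; the present paper does not reprove it, so there is no internal argument to compare against. Your strategy — telescope $w(x)-w(\bar z(\cu_\Pa(x)))$ along an open path, apply the discrete power-mean (H\"older) inequality to absorb the path length into a $\size(\cu_\Pa(x))^{d(s-1)}$ factor, and then exchange the order of summation to recombine $\size^{d(s-1)}\cdot\size^{d}$ into $\size^{ds}$ — is the natural one and, modulo one geometric point, gives the right exponents.

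The point you have glossed over, and which I do not think is ``just bookkeeping'', is the following. You take the path $\gamma_x$ provided by~\eqref{cubeandneighbors}, which only asserts that $x$ and $\bar z(\cu_\Pa(x))$ are connected \emph{inside $\cu_\Pa(x)$ together with its $\Pa$-neighbors}. When $\cu_\Pa(x)$ touches $\partial\cu$, these neighbors spill outside $\cu$, so $\gamma_x$ may use edges whose two endpoints both lie outside $\C_*(\cu)$. After you swap the sums, those edges are charged $\size(\cu_\Pa(e))^{ds}\,|\nabla w(e)|^s$, but they are not dominated by the right-hand side $\int_{\C_*(\cu)}\size(\cu_\Pa(x))^{sd}|\nabla w|^s(x)\,dx$, whose domain is restricted to $\C_*(\cu)\subseteq\cu$. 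As stated, your argument only proves the inequality with the right-hand integral taken over a $\Pa$-fattening of $\cu$, which is a strictly weaker statement (and indeed is the kind of cushioning the paper uses when it \emph{applies} the lemma, passing from $\cu_m$ to $\cu_{m+1}$). To prove the lemma with matching domains you need the stronger geometric fact that for $x\in\C_*(\cu)$ and $\cu'=\cu_\Pa(x)$, the point $x$ already lies in $\C_*(\cu')$ — equivalently, $\C_*(\cu)\cap\cu'\subseteq\C_*(\cu')$ — so that $\gamma_x$ can be taken inside the connected set $\C_*(\cu')\subseteq\C_*(\cu)$ and every charged edge has an endpoint in the integration domain. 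That inclusion is a real consequence of the good-cube structure (all triadic ancestors of $\cu'$ up to $\cu$ are good, and part (ii) of Definition~\ref{def.goodcube} connects any sufficiently large path in a good cube to its crossing cluster), and it is exactly what your proof would need to state and prove; invoking~\eqref{cubeandneighbors} alone does not deliver it.

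Everything else — the bound $|\gamma_x|\le C\size(\cu_\Pa(x))^d$ by volume, the H\"older step $|\sum_e\nabla w(e)|^s\le|\gamma_x|^{s-1}\sum_e|\nabla w(e)|^s$, the observation that for a fixed edge $e$ only the $O(\size(\cu_\Pa(e))^d)$ points $x$ in the $\Pa$-neighborhood of $\cu_\Pa(e)$ can use $e$, and the comparability of sizes of neighboring cubes from Proposition~\ref{p.partitions}(ii) — is correct and matches what the exponent $sd$ demands. For~\eqref{e.coarseLsgen} your reduction to connected components is the right first step, and the remark after the proposition in the paper explains exactly why this reduction (rather than a naive sum over the $\cu_i$) is necessary; the same $\C_*(\cup_i\cu_i)\cap\cu'\subseteq\C_*(\cu')$ type of inclusion is what must be verified on each component.
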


\begin{remark}
We do not have the identity $\C_*\left( \cup_{i \in I} \cu_i \right) = \bigcup_{i \in I}  \C_*\left( \cu_i \right)$. The problem is the same than the one of~\eqref{cubeandneighbors} and thus~\eqref{e.coarseLsgen} can not be directly obtained from~\eqref{e.coarseLs}. Nevertheless, we do have the inclusion
\begin{equation} \label{useablecubeandneighbor}
\C_\infty \cap \cu \subseteq \C \left( \bigcup_{\cu' \in \Pa,~ \dist \left( \cu, \cu' \right) \leq 1} \cu' \right).
\end{equation}
\end{remark}

Moreover we can control the $L^s$-norm of the vector field $\nabla  \left[ w \right]_\Pa$ by the $L^s$-norm of the map $\nabla w$ and the coarseness of the partition $\Pa$ thanks to the following proposition.

\begin{proposition}[Lemma 3.3 of \cite{AD2}] \label{p.sobcl}
\label{l.coarsegrads}
For each triadic cube~$\cu \in \Pa_*$, each exponent $1\leq s<\infty$ and each function $w: \C_\infty \to \R$,
\begin{equation} 
\label{e.coarsegrads}
\sum_{x \in \C_*(\cu)} \left| \nabla  \left[ w \right]_{\Pa} \right|^s(x)
\leq C^s \sum_{x \in \C_*(\cu)} \size(\cu_\Pa(x))^{sd-1} \left|\nabla w \right|^s(x).
\end{equation}
More generally, for any family of disjoint cubes $\left\{ \cu_i \right\}_{i \in I} \in \left( \Pa_* \right)^I$, we have
\begin{equation} \label{e.coarsegradsgen}
\sum_{x \in \C_*\left( \cup_{i \in I} \cu_i \right)} \left| \nabla  \left[ w \right]_{\Pa} \right|^s(x)
\leq C^s \sum_{x \in \C_*\left( \cup_{i \in I} \cu_i \right)} \size(\cu_\Pa(x))^{sd-1} \left|\nabla w \right|^s(x).
\end{equation}
\end{proposition}

\subsection{Solving the Poisson equation with divergence form source term}
In this section we study the existence and uniqueness of the solution of the equation $-\nabla \cdot \a \nabla u = - \nabla \cdot \xi$ on the infinite cluster $\C_\infty$. We denote by $\sum_{e \subseteq \C_\infty}$ the sum over all the edges of the infinite cluster.

\begin{proposition}[Gradient of Green's function] \label{p.green}
Given an environment $\a \in \Omega$, we select an edge $e=(x,y) \in E_d$ such that the points $x$ and $y$ belong to the infinite cluster $\C_\infty$. There exist a constant $C := C(d, \lambda) < \infty$ and a function $\nabla G(e , \cdot) : \C_\infty \rightarrow \R$, whose gradient with respect to the second variable, denoted by $\nabla \nabla G $, satisfies
\begin{equation} \label{gradientgreenbounded}
 \left\langle \nabla \nabla G(e , \cdot) , \nabla \nabla G (e , \cdot) \right\rangle_{\C_\infty} \leq C ,
\end{equation}
and is a solution to the equation
\begin{equation*}
-\nabla \cdot \a \nabla \left( \nabla G(e, \cdot ) \right) = \delta_x - \delta_y ~ \mbox{in} ~ \C_\infty.
\end{equation*}
Moreover, we have, for each pair of edges $e , e'$ of the infinite cluster,
\begin{equation} \label{symmetryGreen}
\nabla \nabla  G(e , e') = \nabla \nabla G (e', e).
\end{equation}
\end{proposition}

Proposition~\ref{p.green} can be used to solve the equation $-\nabla \cdot \a \nabla w_\xi = - \nabla \cdot \xi $. This is the objective of the following proposition.

\begin{proposition}\label{morecomplicatedgreen}
Let $\xi : E_d \to \R$ be a vector field satisfying
\begin{equation} \label{assumpsupportcluster}
\xi(x,y) = 0 \mbox{ if } \a(x,y) = 0 \mbox{ or } x,y \notin \C_\infty.
\end{equation}
If $\xi$ satisfies $\left\langle  \xi ,  \xi \right\rangle_{\C_\infty} < \infty$ then there exists a unique (a.s in the environment and up to a constant) solution $w_\xi$ of the equation
\begin{equation*}
-\nabla \cdot \a \nabla w_\xi = - \nabla \cdot \xi ~ \mbox{in} ~ \C_\infty.
\end{equation*}
Moreover, we have the following representation
\begin{equation} \label{representationformula}
\nabla w_\xi (\cdot)= \sum_{e \subseteq \C_\infty} \xi(e) \nabla \nabla G \left( e, \cdot \right) .
\end{equation}
\end{proposition}

\begin{remark}
We extend the definition of Proposition~\ref{morecomplicatedgreen} to vector-valued fields $\xi : E_d \to \R^k$. In that case, we will write
\begin{equation*}
w_\xi  := \left\{ \begin{array}{ccc}
 \C_\infty & \to &\R^k, \\
 z & \mapsto & \left( w_{\xi_1}(z) , \ldots, w_{\xi_1}(z) \right),\\
\end{array} \right.
\end{equation*}
where $\xi_1 , \ldots, \xi_k$ denote the components of the vector $\xi$; the formula~\eqref{representationformula} applies in this framework.
\end{remark}

\begin{proof}[Proof of Proposition~\ref{p.green} and~\ref{morecomplicatedgreen}]
Let $\xi$ be a vector field satisfying~\eqref{assumpsupportcluster} and $\left\langle  \xi ,  \xi \right\rangle_{\C_\infty} < \infty$. We denote by $\dot{H}^1$ the space of functions defined on the infinite cluster whose gradient is in the space $L^2(\C_\infty)$, i.e., $\dot{H}^1 := \left\{ u : \C_\infty \to \R ~:~ \left\langle \nabla u , \nabla u \right\rangle_{\C_\infty} < \infty \right\}$,
and consider the minimization problem
\begin{equation*}
\inf_{u \in \dot{H}^1 } \frac 12 \left\langle \nabla u , \a \nabla u \right\rangle_{\C_\infty} - \left\langle \xi , \nabla u \right\rangle_{\C_\infty}.
\end{equation*}
By the standard techniques of the calculus of variations, there exists a unique solution (up to  a constant) to this problem denoted by $w_\xi$. In particular, when $\xi$ is the indicator of an edge $e$, we obtain the function $\nabla G \left( e, \cdot \right)$.
To prove the identity~\eqref{symmetryGreen}, we note that
\begin{equation*}
\nabla \nabla G( e', e) = \left\langle \nabla \nabla G(e , \cdot) ,\a  \nabla \nabla G (e' , e) \right\rangle_{\C_\infty}  = \nabla \nabla G (e, e').
\end{equation*}
The representation formula~\eqref{representationformula} follows from standard arguments.
\end{proof}

\subsection{Regularity theory}
In this subsection, we record a result from the regularity theory established in~\cite{AD2} giving a Lipschitz bound for the gradient of $\a$-harmonic functions.

\begin{proposition}[Regularity theory on $\C_\infty$, Theorem 2 of~\cite{AD2}]\label{p.regularitytheory} There exist a constant $C < \infty,$ an exponent $s > 0$ and a random variable $\X$ satisfying
\begin{equation} \label{Xthm2perco}
\X \leq \O_s(C),
\end{equation}
such that for each solution $u : \C_\infty \to \R$ of the equation
\begin{equation} \label{ellipticeqperco.00}
-\nabla \cdot \a \nabla u = 0 ~\mbox{in}~ \C_\infty
\end{equation}
and each pair of radii $R , r$ such that $R \geq r \geq \X$, we have
\begin{equation*}
\left\| \nabla u \right\|_{\underline{L}^2 \left(\C_\infty \cap B_r \right) } \leq  \frac CR \left\| u - \left( u \right)_{\C_\infty \cap B_R} \right\|_{\underline{L}^2 \left(\C_\infty \cap B_R \right) }.
\end{equation*}
\end{proposition}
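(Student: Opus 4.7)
The statement is a large-scale Lipschitz (or $C^{0,1}$-type) estimate in the spirit of Avellaneda--Lin, adapted to the percolation setting; it appears as part of the regularity theory of~\cite{AD2}, so the plan is essentially to extract the scale-invariant energy bound from a Campanato-style excess-decay iteration built on homogenization.

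The first step would be to establish a one-step \emph{excess decay}: there exist $\alpha\in(0,1)$ and $\theta\in(0,1/2)$ such that, on every scale $r$ above a suitable minimal scale $\X$, every $\a$-harmonic $u$ satisfies
\begin{equation*}
\inf_{p\in\Rd}\left\|\nabla u-(p+\nabla\chi_p)\right\|_{\underline{L}^2(\C_\infty\cap B_{\theta r})}^2
\le \tfrac{1}{2}\inf_{p\in\Rd}\left\|\nabla u-(p+\nabla\chi_p)\right\|_{\underline{L}^2(\C_\infty\cap B_{r})}^2 .
\end{equation*}
This is proved by combining a quantitative homogenization error estimate (the two-scale expansion from~\cite{AD2} producing an $O(r^{-\delta})$ error in energy norm) with the fact that on the coarsened level the homogenized equation has constant coefficients, so its solutions are approximated by affine functions at any rate we like. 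The coarsening step uses the partition $\Pa$ together with Propositions~\ref{l.coarseLs} and~\ref{l.coarsegrads}, and the Meyers estimate (Proposition~\ref{meyercorrector}) is used to convert $L^2$ gradient information into slightly higher integrability to absorb boundary-layer defects.

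The second step is a standard Campanato iteration. Iterating the excess decay across dyadic scales $\theta^k R$ down to $r$, and using the sublinearity of the correctors~\eqref{e.infinitenormcorr}--\eqref{minscale.sublin} to estimate $|p|\lesssim \|\nabla u\|_{\underline{L}^2(\C_\infty\cap B_{\theta^k R})}$ at each scale, one obtains
\begin{equation*}
\left\|\nabla u\right\|_{\underline{L}^2(\C_\infty\cap B_r)}
\le C\left\|\nabla u\right\|_{\underline{L}^2(\C_\infty\cap B_R)},
\end{equation*}
for all $R\ge r\ge \X$. The random minimal scale $\X$ is then taken to be the maximum of: the minimal scale $\M_{\mathrm{Meyers}}$ from Proposition~\ref{meyercorrector}; the minimal scale $\M_t(\Pa)$ for a sufficiently large $t$ ensuring that~\eqref{e.sizecluster} and the coarsening estimates hold; the minimal scale from~\eqref{minscale.sublin} controlling the oscillation of the correctors; and the minimal scale on which the quantitative homogenization error is $\le r^{-\delta}$. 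Each of these is bounded by $\O_s(C)$ for appropriate $s$, so their maximum satisfies~\eqref{Xthm2perco} by the subadditivity property~\eqref{e.Osums}.

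The main obstacle is setting up the excess decay cleanly on the cluster: harmonicity is only available on $\C_\infty$, while the approximation by affine-plus-corrector profiles is most naturally carried out on $\Zd$. The coarsening via $\Pa$ is the bridge, but one must pay careful attention to boundary layers around bad cubes, which is precisely where the Meyers self-improvement and the quantitative sublinearity of the correctors are used to absorb the error terms so that the iteration closes with a factor $1/2$ per step.
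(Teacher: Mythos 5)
Your plan is essentially a from-scratch derivation of the large-scale $C^{0,1}$ regularity via excess decay and a Campanato iteration, and the ingredients you name (two-scale error estimates, coarsening via $\Pa$, Meyers, sublinearity of correctors) are indeed the ones underlying~\cite{AD2}'s Theorem~2. This could be made to work, but it re-proves a theorem the paper simply \emph{cites}. The paper's own proof is much shorter and takes a different route: it invokes Theorem~2~(iii) of~\cite{AD2} with $k=0$ as a black box, which yields the oscillation decay
\begin{equation*}
\inf_{c \in \R} \left\| u - c \right\|_{\underline{L}^2(\C_\infty \cap B_r)} \leq C \,\frac{r}{R}\, \inf_{c \in \R} \left\| u - c \right\|_{\underline{L}^2(\C_\infty \cap B_R)}
\qquad (R \geq r \geq \X),
\end{equation*}
and then transfers this to the gradient by sandwiching with two elementary inequalities: the Caccioppoli inequality (Proposition~\ref{l.caccioppoli} with $\xi=0$) to bound $\|\nabla u\|_{\underline{L}^2(B_r)}$ by $r^{-1}\inf_c\|u-c\|_{\underline{L}^2(B_{2r})}$, and a cluster Poincar\'e inequality (derived from the Sobolev inequality on the cluster together with H\"older and the volume control $\M_{3pd}(\Pa)\leq\X$) to bound $\inf_c\|u-c\|_{\underline{L}^2(B_R)}$ by $R\|\nabla u\|_{\underline{L}^2(B_R)}$. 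Chaining these three gives the claim directly. Your route buys a self-contained argument; the paper's buys brevity and avoids re-executing the delicate homogenization-commutation and boundary-layer-absorption steps you allude to, which at the level of a sketch you leave largely unproven (e.g., the constant $1/2$ per step, and the claim that the Meyers estimate is needed to close the excess decay on the cluster). If you are allowed to quote~\cite{AD2}'s Theorem~2~(iii), you should: the Caccioppoli--Poincar\'e sandwich is then a five-line argument, and your explicit set-up of excess decay becomes redundant.
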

We introduce the notation, for each point $x \in \Zd$,
\begin{equation*}
\X (x) := \X \circ \tau_x.
\end{equation*}

This proposition is weaker than Theorem 2 of~\cite{AD2}; it is indeed a consequence of the Caccioppoli inequality and Theorem 2 (iii) of~\cite{AD2} for $k=0$. As a consequence, we obtain the following Lipschitz bound on the corrector.

\begin{proposition}[Lipschitz bound on the corrector] \label{p.lipboundcorr}
There exists a constant $C < \infty$ and an exponent $s > 0$ such that, for each edge $e = (x , y) \in E_d$ and each vector $p \in \Rd$,
\begin{equation} \label{lipboundcorrbis}
\left| \nabla \chi_p (e) \right| \indc_{\{ e \in \C_\infty \}} \leq C |p| \X^{d/2}(x).
\end{equation} 
which implies, by~\eqref{Xthm2perco},
\begin{equation} \label{lipboundcorr}
\left| \nabla \chi_p (e) \right| \indc_{\{ e \in \C_\infty \}} \leq \O_s \left( C |p| \right),
\end{equation} 
for some smaller exponent $s$ (cf. Subsection~\ref{subsecconcstexp}). The same estimate holds for the coarsened corrector
\begin{equation} \label{lipboundcoarcorr}
\left| \nabla \left[ \chi_p \right]_\Pa (e) \right| \leq \O_s(C |p|). 
\end{equation}
\end{proposition}

\begin{remark}
The same estimate than~\eqref{lipboundcorrbis} would hold with the random variable $\X^{d/2}(y)$ instead of $\X^{d/2}(x)$ in the right-hand side.
\end{remark}

\begin{proof}
By the stationarity of the law of the corrector, we can assume that the edge $e$ touches $0$, i.e., that $x = 0$. First note that, for each radius $r \geq 1$,
\begin{equation*}
\left| \nabla \chi_p (e) \right| \indc_{\{ e \in \C_\infty\} } \leq r^d \left\| \nabla \chi_p \right\|_{\underline{L}^2 \left(\C_\infty \cap B_r \right) }  .
\end{equation*}
By applying Proposition~\ref{p.regularitytheory} with $r= \X$, and taking the limit $R \rightarrow \infty$, we obtain
\begin{equation*}
 \left\| p + \nabla \chi_p \right\|_{\underline{L}^2 \left(\C_\infty \cap B_\X \right)} \leq C \X^{d/2} \liminf_{R \rightarrow \infty } \frac 1R \| l_p + \chi_p  - \left( l_p + \chi_p \right)_{\C_\infty \cap B_{R}(x')}\|_{\underline{L}^2 \left( \C_\infty \cap B_{R}(x') \right)} \leq C \X^{d/2} |p|.
\end{equation*}
A combination of the two previous displays with the integrability estimate~\eqref{Xthm2perco} yields~\eqref{lipboundcorr}. To prove~\eqref{lipboundcoarcorr}, we combine~\eqref{lipboundcorr} with Proposition~\ref{p.sobcl} and use the integrability estimate $\size \left( \cu_\Pa(x) \right) \leq \O_s(C)$, valid for each point $x \in \Zd$. This is performed in the following computation: for each edge $e= (x,y) \in \Rd$, we have
\begin{align} \label{refcomputation}
 \left| \nabla \left[ \chi_p \right]_{\Pa} (e) \right| & \leq \sum_{x' \in \C_*\left(\cu_\Pa (x) \cup\cu_\Pa (y) \right)} \left| \nabla  \left[ \chi_p  \right]_{\Pa} \right|(x') \\
& \leq  C \sum_{x' \in \C_\infty \cap B \left( x , C \size \left( \cu_\Pa (x) \right) \right)} \size \left(\cu_\Pa(x')\right)^{d-1} \left|\nabla\chi_p  \right|(x') \notag \\
& \leq C \sum_{x' \in \Zd } \indc_{\{ x' \in \C_\infty \cap B \left( x , C \size \left( \cu_\Pa (x) \right) \right)\}}  \size \left(\cu_\Pa(x')\right)^{d-1} \left|\nabla\chi_p  \right|(x'). \notag
\end{align}
Using the estimate, for each point $x \in \Zd$, $\size \left( \cu_\Pa (x) \right) \leq \O_s(C)$, we obtain
\begin{equation} \label{estimateindicatorCC}
\indc_{\left\lbrace x' \in B \left(x, C \size \left( \cu_\Pa (x) \right) \right) \right\rbrace} \leq C \frac{ \size \left( \cu_\Pa (x) \right)^{d+1}}{|x-x'|^{d+1}\vee 1} \leq \frac{\O_s (C)}{|x-x'|^{d+1}\vee 1},
\end{equation}
where we used the notation $a \vee b := \max(a,b)$. Using the summability of the map $x \mapsto \left(|x|  \vee 1\right)^{-d-1}$, the properties~\eqref{e.Oavgs} and~\eqref{e.Oprods} on the $\O_s$ notation and the Lipschitz bounds~\eqref{lipboundcorr} on the corrector, we obtain the result.
\end{proof}

We now present the two main tools to prove Theorem~\ref{mainthm}. The first one is a concentration inequality, thanks to which we obtain quantitative control on the spatial averages of the gradient at scale $R$ (see Proposition~\ref{boundspatialaveragecorrector}). We then deduce Theorem~\ref{mainthm} from Proposition~\ref{boundspatialaveragecorrector} thanks to the multiscale Poincar\'e inequality (Proposition~\ref{multiscalepoincare}).

\subsection{Concentration inequality for stretched exponential moments}\label{subsecspectral}

The following concentration inequality is a key ingredient in the proof of Proposition~\ref{boundspatialaveragecorrector} in the next section; its proof can be found in~\cite[Proposition 2.2]{armstrong2017optimal}.
\begin{proposition}[Proposition 2.2 of~\cite{armstrong2017optimal}] \label{resampling}
Fix $\beta \in (0,2)$. Let $X$ be a random variable on $(\Omega,\F,\P)$ and set for each bond $e \in \mathcal{B}_d(\Zd)$,
\begin{equation*}
X_e' = \E \left[X | \F \left( \mathcal{B}_d \setminus \{ e \} \right) \right] ~\mbox{and}~ \mathbb{V}[X] = \sum_{e \in \mathcal{B}_d} \left(X - X_e' \right)^2,
\end{equation*}
then there exists a constant $C := C(d, \beta) < \infty$ such that
\begin{equation*}
\E \left[ \exp \left( \left| X - \E[X] \right|^{\beta}\right) \right] \leq C \E \left[ \exp \left( \left( C \mathbb{V}[X] \right)^{\frac{\beta}{2-\beta}}\right) \right]^{\frac{2-\beta}{\beta}}.
\end{equation*}
\end{proposition}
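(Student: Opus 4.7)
My approach is to reduce the stated stretched-exponential inequality to a Boucheron--Bousquet--Lugosi--Massart type moment inequality
\begin{equation*}
\|X - \E[X]\|_p \leq C \sqrt{p}\, \bigl\|\mathbb{V}[X]^{1/2}\bigr\|_p \qquad \text{for every } p \geq 2,
\end{equation*}
and then convert this $L^p$-bound into the desired stretched-exponential bound by Taylor-expanding $\exp(|X - \E[X]|^\beta)$ and summing. The constraint $\beta \in (0,2)$ will turn out to be exactly what is needed for convergence of that series.

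First I would enumerate the edges $\mathcal{B}_d = \{e_1, e_2, \ldots\}$ and form the Doob martingale $M_n := \E[X \mid \mathcal{F}_n]$, where $\mathcal{F}_n := \sigma(\a(e_i) : 1 \leq i \leq n)$, with increments $D_n := M_n - M_{n-1}$. Since $X'_{e_n}$ is $\F(\mathcal{B}_d \setminus \{e_n\})$-measurable and $\a(e_n)$ is independent of that $\sigma$-algebra, a short computation gives $D_n = \E[X - X'_{e_n} \mid \mathcal{F}_n]$, from which Jensen yields the conditional $L^2$ bound $D_n^2 \leq \E[(X - X'_{e_n})^2 \mid \mathcal{F}_n]$. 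The Burkholder--Davis--Gundy inequality then gives, for every $p \geq 2$,
\begin{equation*}
\|X - \E[X]\|_p \leq C\sqrt{p}\, \Bigl\| \Bigl(\sum_n D_n^2\Bigr)^{1/2} \Bigr\|_p \leq C\sqrt{p}\, \bigl\|\mathbb{V}[X]^{1/2}\bigr\|_p,
\end{equation*}
the last step using a Stein-type inequality of the form $\|\sum_n \E[Y_n \mid \mathcal{F}_n]\|_q \leq C_q \|\sum_n Y_n\|_q$ for non-negative summands under an increasing filtration. An alternative route, perhaps cleaner, is to bypass martingales entirely and derive this moment inequality by an iterated application of the modified log-Sobolev inequality for product measures, following Boucheron--Bousquet--Lugosi--Massart.

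With this moment bound in hand, set $Z := |X - \E[X]|$, $V := \mathbb{V}[X]$, and $\alpha := 2\beta/(2-\beta)$, and expand
\begin{equation*}
\E\bigl[\exp(Z^\beta)\bigr] = \sum_{k=0}^\infty \frac{\E[Z^{k\beta}]}{k!} \leq \sum_{k=0}^\infty \frac{C^{k\beta} (k\beta)^{k\beta/2}}{k!}\, \E[V^{k\beta/2}].
\end{equation*}
Comparing this term-by-term with the Taylor expansion $\E[\exp((CV)^\alpha)] = \sum_j (C^{\alpha j}/j!)\, \E[V^{\alpha j}]$ through the matching $\alpha j = k\beta/2$, i.e.\ $j = k(2-\beta)/4$, and using Stirling to evaluate the ratio of factorials, one finds a prefactor of order $k^{k(\beta-2)/4}$, which is summable precisely because $\beta < 2$. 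Interpolating between consecutive integer values of $j$ and summing yields $\E[\exp(Z^\beta)] \leq C\, \E[\exp((CV)^\alpha)]$, which is the claim.

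The main obstacle lies in the moment inequality of the second step: the martingale increments $D_n$ are only dominated in a conditional $L^2$ sense by the summands $(X - X'_{e_n})^2$ of $\mathbb{V}[X]$, so upgrading a bound on $\|(\sum_n D_n^2)^{1/2}\|_p$ into a bound on $\|\mathbb{V}[X]^{1/2}\|_p$ requires either a Stein/Burkholder-type control of conditional expectations or a direct entropy/log-Sobolev argument together with a careful tracking of the dependence of constants on $p$. Once this is granted, the Taylor-expansion step is a clean combinatorial exercise, and the constraint $\beta \in (0,2)$ is exactly what forces the exponent on the right-hand side to be $2\beta/(2-\beta)$.
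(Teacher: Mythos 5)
The paper does not prove this proposition: it is imported wholesale, with an explicit citation (``A proof of this inequality can be found in~\cite[Proposition 2.2]{armstrong2017optimal}''), so there is no proof in the present paper to compare your argument against. I can therefore only assess your blind attempt on its own terms.

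Your two-step scheme --- reduce to an $L^p$-moment bound $\left\| X - \E[X] \right\|_p \lesssim \sqrt{p}\, \left\| \mathbb{V}[X]^{1/2} \right\|_p$ for $p\ge 2$, then sum Taylor coefficients --- is a sound strategy, and your Stirling bookkeeping in the second step is correct: with $\alpha=2\beta/(2-\beta)$ and $j=k(2-\beta)/4$, the coefficient ratio contains $k^{k(\beta-2)/4}$, which is summable precisely when $\beta<2$. The non-integer-$j$ and small-$k$ fringes are routine to handle by interpolation and the Efron--Stein bound on $\E[(X-\E X)^2]$.

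The gap is exactly where you say it is, and it is a genuine one. The BDG + Stein route does not give $\sqrt{p}$. Stein's conditional-expectation inequality $\left\|\sum_n\E[Y_n\mid\F_n]\right\|_q \le C_q\left\|\sum_n Y_n\right\|_q$ has $C_q$ of order $q$, not $O(1)$: the duality proof pairs against $\sup_n\E[\,|g|\mid\F_n]$ and invokes Doob's $L^{q'}$ maximal inequality, whose constant $q'/(q'-1)$ equals $q$. With $q=p/2$, the combined bound becomes $\left\|X-\E X\right\|_p\lesssim p\,\left\|\mathbb{V}[X]^{1/2}\right\|_p$. Rerunning your Taylor comparison with this $p$-growth forces the right-hand exponent to be $\beta/(2-2\beta)$ and restricts the range to $\beta<1$, which is not what is claimed. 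So the martingale route as written proves a strictly weaker statement. The entropy / iterated modified log-Sobolev route (Boucheron--Bousquet--Lugosi--Massart) that you flag as ``perhaps cleaner'' is in fact the only one of your two options that actually delivers the $\sqrt{p}$ moment bound, and you should commit to it. Even there, one detail still needs attention: the BBLM moment inequality controls $\left\|X-\E X\right\|_p$ by $\left\|V_+^{1/2}\right\|_p$ with $V_+=\sum_e\E^{(e)}\!\left[(X-X'_e)^2\right]$, in which the conditional expectation is applied to each summand, whereas the proposition's $\mathbb{V}[X]=\sum_e(X-X'_e)^2$ has no such averaging. The two agree after taking a full expectation (the variance, $p=2$, case), but in $L^{p/2}$ for large $p$ the comparison is not automatic and must be spelled out.
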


\subsection{Multiscale Poincar\'e inequality} \label{subsecpoincar}

The next proposition is a version of the multiscale Poincar\'e inequality. It controls the oscillations of a function in the $L^q$-norm (left-hand side of~\eqref{heatmultscaleP.ver}) by the spatial average of the gradient of the function (right-hand side of~\eqref{heatmultscaleP.ver}). We first introduce the discrete heat kernel.

\begin{definition}[Discrete heat-kernel] \label{def.heatkerneldisc}
Let $\Phi : [ 0 , \infty) \times \Zd \to \R$ be the discrete heat kernel on the lattice $\Zd$, i.e., the solution of the parabolic equation
\begin{equation*}
\left\{ \begin{array}{ccc}
\partial_t \Phi - \Delta \Phi & = 0 & ~\mbox{in}~(0 , \infty) \times \Zd, \\
\Phi(0 , \cdot) & = \delta_0 & ~\mbox{in}~ \Zd,
\end{array} \right.
\end{equation*}
where $\Delta$ denotes the discrete Laplacian on $\Zd$. We introduce the notation, for each radius $r > 0$, $\Phi_{r^2} := \Phi\left( r^2 , \cdot \right)$. It satisfies the estimate, for some constant $C := C(d) < \infty$,
\begin{equation} \label{heatkernbound}
    \left\{ \begin{aligned}
    \Phi(t , x)& \leq \frac{C}{t^{\frac{d}{2}}} \exp \left( - \frac{|x|^2}{Ct}\right) &~\mbox{if}~ t \geq |x| \hspace{5mm}(\mbox{Gaussian regime}), \\
    \Phi(t , x) &\leq C \exp \left(  - \frac{|x|}{C} \left( 1 + \ln \frac{|x|}{t}\right) \right) &~\mbox{if}~ t \leq |x| \hspace{5mm}(\mbox{Poisson regime}).
    \end{aligned}
    \right.
\end{equation}
We refer to~\cite{De99} and~\cite{Da93} for a proof of these inequalities.
\end{definition}

\begin{proposition}[Multiscale Poincar\'e inequality, heat kernel version] \label{multiscalepoincare}

For each exponent $q \geq 1$, there exists a constant $C:= C(d , q) < \infty$ such that for each function $u : \Zd \to \R$ and each radius $R>0$,
\begin{equation} \label{heatmultscaleP.ver}
\left\| u - (u)_{B_R}  \right\|_{\underline{L}^q\left(B_R\right)} \leq C  \left( \sum_{x \in \Zd} R^{-d}e^{ - \frac{|x|}{2R} } \left( \int_{0}^{2R} r \left| \Phi_{r^2} * \nabla u (x) \right|^2 \, dr  \right)^{\frac q2} \right)^\frac1q,
\end{equation}
where the operator $*$ is the standard discrete convolution on $\Zd$ between the heat kernel $\Phi_{r^2}$ and the vector valued function $\nabla u$ (see the definition~\eqref{eq:18321302}). Moreover the dependence in the variable $q$ of the constant $C$ can be quantified as follows, for each exponent $q \geq 2$,
\begin{equation*}
C (d, q ) \leq A q^{\frac 32}
\end{equation*} 
for some constant $A := A(d) < \infty$.
\end{proposition}
The proof of this proposition relies on~\cite[Proposition D.1 and Remark D.6]{armstrong2017quantitative} and is presented in Appendix~\ref{appA}.
 
\section{Estimates of the spatial averages of the first-order correctors}\label{section3}

We now have collected all the necessary tools to prove the optimal $L^q$ bounds of the corrector, stated in Theorem~\ref{mainthm}. The strategy is to first prove Proposition~\ref{boundspatialaveragecorrector} thanks to the concentration inequality (Proposition~\ref{resampling}). We then deduce the bound on the coarsened corrector thanks to the multiscale Poincar\'e inequality (Proposition~\ref{multiscalepoincare}) and remove the coarsening thanks to Proposition~\ref{l.coarseLs}. This eventually yields Theorem~\ref{mainthm}. In this section, we use the notation introduced in~\eqref{eq:18321302} and think of the gradient of the coarsened corrector as a vector-valued function. 

\begin{proposition}\label{boundspatialaveragecorrector}
For each $R \geq 1$, and each $x \in \Rd$, the quantity $  \left(\Phi_{r^2} * \nabla \left[ \chi_p \right]_{\Pa}\right) (x)$ is well-defined and there exist an exponent $s > 0$ and a constant $C  < \infty$ such that it satisfies
\begin{equation}\label{boundspatialaveragecorrector.est}
\left| \left( \Phi_{r^2} * \nabla \left[ \chi_p \right]_{\Pa} \right) (x) \right| \leq  \O_s \left(C |p| r^{-\frac d2} \right).
\end{equation}
\end{proposition}

By the stationarity of the gradient of the corrector, it is enough to prove the result when $x = 0$. By linearity of the mapping $p \mapsto \nabla \chi_p$, we may assume $|p| = 1$. We denote by $X =  \left( \Phi_{r^2} * \nabla \left[ \chi_p \right]_{\Pa} \right) (0) $ and prove
\begin{equation*}
\left|X  \right| \leq  \O_s \left(C r^{-\frac d2} \right),
\end{equation*}
The strategy of the proof is to apply Proposition~\ref{resampling} to the random variable $X$. We decompose the argument into two lemmas. The first one focuses on the expectation of $X$.

\smallskip

\begin{lemma} \label{lemma3.4}
There exists a constant $C < \infty$ such that
\begin{equation*}
\left| \E [X] \right| \leq C  r^{-\frac d2}.
\end{equation*}
\end{lemma}

\smallskip
The second one estimates the quantity $\mathbb{V}[X]$.
\begin{lemma} \label{lemma3.5}
There exist a constant $C < \infty$ and an exponent $s > 0$, such that
\begin{equation*}
\mathbb{V}[X] \leq \O_s\left( C r^{-d}\right).
\end{equation*}
\end{lemma}
These lemmas are proved in the following two sections.

\subsection{Estimating the expectation of the spatial averages.} The main objective of this section is to show Lemma~\ref{lemma3.4}.

\begin{proof}[Proof of Lemma~\ref{lemma3.4}] The strategy of the proof is to use the stationarity and the sublinearity of the corrector to prove that the expectation of its gradient is equal to $0$. The technical difficulty which arises is that the partition $\Pa$ is not stationary. This implies that the random variable $\nabla \left[ \chi_p \right]_{\Pa} (0)$ is not stationary. To fix this issue we introduce a partition $\Pa_\mathrm{stat}$ which is stationary and equal to $\Pa$ on a set of large probability. We finally show that the error we make by considering the partition $\Pa_\mathrm{stat}$ instead of $\Pa$ is small. 

For each triplet $x,y,z \in \Zd$ with $x \sim y$, denote by $\tau_z \a$ the translated environment defined by
\begin{equation*}
\tau_z \a (\{ x,y \}) = \a (\{ x - z,y -z \}).
\end{equation*}
For $k \in \N$, we construct the partition $\Pa_\mathrm{stat}^k$ by applying~Proposition~\ref{p.partitions} to the collection of triadic cubes
\begin{equation*}
\G_\mathrm{stat}^k := \G \bigcup  \left( \bigcup_{n = k}^{\infty} \T_n\right).
\end{equation*}
Note that this collection is not a set of good cubes in the sense of Definition~\ref{def.goodcube} but it is $3^k\Zd$-translation invariant. A straightforward consequence is that the partition $\Pa_\mathrm{stat}^k$ is $3^k\Zd$-stationary: for every environment $\a$, every point $x \in \Zd$ and $z \in 3^k \Zd$, one has
\begin{equation} \label{3kstationnarypartition}
\size \left( \cu_{\Pa_\mathrm{stat}^k} (x + z) \right) (\tau_z \a) = \size \left( \cu_{\Pa_\mathrm{stat}^k} (x ) \right) ( \a).
\end{equation}
With a proof similar to the one of~\cite[Proposition 2.1 (iv)]{AD2}, we obtain
\begin{equation}\label{errorstatpart}
\P \left[ \exists x \in \cu_k, \, \cu_\Pa (x) \neq  \cu_{\Pa_\mathrm{stat}^k} (x)\right] \leq C \exp \left( - C^{-1} 3^{k}\right).
\end{equation}
For a function $u: \C_\infty \rightarrow \R$, we define the coarsened function $\left[ u \right]_{\Pa_\mathrm{stat}^k}$ with respect to the partition $\Pa_{\mathrm{stat}}^k$ by the formula
\begin{equation*}
\left[ u \right]_{\Pa_\mathrm{stat}^k} := u\left(\bar{z}_{\mathrm{stat}}\left(\cu_{\Pa_\mathrm{stat}^k} (x)\right)\right)
\end{equation*}
with the notation, for $\cu \in \T$,
\begin{equation} \label{d.corsenstatio}
\bar{z}_{\mathrm{stat}}\left(\cu\right) : = \left\{
  \begin{array}{lcl}
    \bar{z}\left(\cu\right) \mbox{ if } \bar{z}\left(\cu \right)  \in \C_\infty \mbox{ and } \cu\mbox{is a good cube}, \\
      \underset{z \in \C_\infty} {\argmin}\dist \left( z, \cu \right) \mbox{ otherwise}.
  \end{array}
\right.
\end{equation}
If there is more than one choice in the argument of the minima, we select the one which is minimal for the lexicographical order. By the stationarity of the gradient of the corrector and~\eqref{3kstationnarypartition}, we have 
\begin{equation} \label{statcoarsencorrector}
\nabla \left[ \chi_p \right]_{\Pa_\mathrm{stat}^k} \mbox{ is } 3^k \Zd-\mbox{stationnary}.
\end{equation}
We let $k \in \Zd$ be the the integer such that $3^k \leq r^{\frac 12} \leq 3^{k+1}$ and split the proof of Lemma~\ref{lemma3.4} into three steps:
\begin{enumerate}
\item[(i)] In Step 1, we prove
\begin{equation*}
\E \left[ \left| \left( \Phi_{r^2} *  \nabla \left[ \chi_p \right]_{\Pa} \right) (0)  -  \left( \Phi_{r^2} * \nabla \left[ \chi_p \right]_{\Pa_\mathrm{stat}^k} \right) (0) \right| \right]  \leq C r^{-\frac d2};
\end{equation*}
\item[(ii)] In Step 2, we prove
\begin{equation*}
\E \left[ \sum_{x \in \cu_k } \nabla \left[ \chi_p \right]_{\Pa_\mathrm{stat}^k} (x) \right]=0;
\end{equation*}
\item[(iii)] In Step 3, we use the result obtained in Step 2 to show
\begin{equation*}
\left| \E \left[ \left( \Phi_{r^2} * \nabla \left[ \chi_p \right]_{\Pa_\mathrm{stat}^k} \right) (0) \right] \right| \leq C r^{-\frac d2}.
\end{equation*}
\end{enumerate}
 Lemma~\ref{lemma3.4} is then a consequence of the main results of Steps 1 and 3.

\medskip

\textit{Step 1.} The main result of this step is a consequence of the following computation, by~\eqref{defvectorfieldcluster} and Proposition~\ref{l.coarsegrads},
\begin{align} \label{bigestimatestep1bla1}
\lefteqn{\E \left[ \left|    \left( \Phi_{r^2} * \nabla \left[ \chi_p \right]_{\Pa} \right) (0)   -  \left( \Phi_{r^2} *  \nabla \left[ \chi_p \right]_{\Pa_\mathrm{stat}^k} \right) (0) \right| \right]  } \qquad & \\ &
\leq \E \left[  \sum_{ x \in B_{r^2}}   \left|  \nabla \left[ \chi_p \right]_{\Pa}(x) -\nabla \left[ \chi_p \right]_{\Pa_\mathrm{stat}^k}(x) \right| \Phi_{r^2}(x)   \indc_{\left\{ \exists x \in B_{r^2} \, : \, \cu_{\Pa_\mathrm{stat}^k}(x) \neq \cu_{\Pa} (x) \right\}} \right] \notag \\
& \qquad + \E \left[ \left|   \sum_{x \in \Zd \setminus B_{r^2}} \left(  \nabla \left[ \chi_p \right]_{\Pa}(x) - \nabla \left[ \chi_p \right]_{\Pa_\mathrm{stat}^k}(x) \right)  \Phi_{r^2}(x)   \right| \right]. \notag
\end{align}
The first term on the right-hand side can be estimated (crudely) the following way. We denote by $U_0$ the set
\begin{equation*}
U_0 := \bigcup_{x \in B_{r^2}} \cu_{\Pa} (x),
\end{equation*} 
we then enlarge this set by adding two additional layers of cubes and define
\begin{equation*}
U_1  := \bigcup_{\cu\in \Pa, \dist \left( \cu , U_0 \right) \leq 1} \cu \quad \mbox{ and } \quad U  := \bigcup_{\cu\in \Pa, \dist \left( \cu , U_1 \right) \leq 1} \cu.
\end{equation*}
Note that, by the properties of the partition $\Pa$ and~\eqref{e.Oavgs}, we have the inequality
\begin{equation} \label{est.volU}
|U| = C \left|U_1 \right| \leq  C \left|U_0 \right| \leq C \sum_{x \in B_{r^2}} \size \left( \cu_\Pa (x) \right)^d \leq  \O_s\left(C r^{2d}\right).
\end{equation}
Also with these definitions, we have, for each point $x \in B_{r^2}$,
\begin{equation*}
\left| \nabla \left[ \chi_p \right]_{\Pa_\mathrm{stat}^k}  (x) \right| \leq \sum_{y \in \C_\infty \cap U} \left| \nabla \chi_p \right| (y).
\end{equation*}
Similarly, for each point $x \in B_{r^2}$,
\begin{equation*}
\left| \nabla \left[ \chi_p \right]_{\Pa}  (x) \right| \leq \sum_{y \in \C_\infty \cap U} \left| \nabla \chi_p \right| (y) .
\end{equation*}
This leads to the estimate
\begin{align} \label{trickintPhiR=1}
\left|   \sum_{x \in B_{r^2}}  \left(  \nabla \left[ \chi_p \right]_{\Pa}(x) -\nabla \left[ \chi_p \right]_{\Pa_\mathrm{stat}^k}(x) \right) \Phi_{r^2}(x)  \right|  & \leq C  \left( \sum_{y \in \C_\infty \cap U} \left| \nabla \chi_p \right| (y) \right) \left( \sum_{x \in B_{r^2}} \Phi_{r^2}(x) \right) \\
& \leq C \sum_{y \in \C_\infty \cap U} \left| \nabla \chi_p \right| (y). \notag
\end{align}
Using Proposition~\ref{lipboundcorr}, the estimate on the volume of $U$ given in~\eqref{est.volU} and a computation similar to the one performed in~\eqref{refcomputation}, we obtain
\begin{equation*}
\sum_{y \in \C_\infty \cap U} \left| \nabla \chi_p \right| (y) \leq \O_s\left(C r^{2d}\right).
\end{equation*}
Then by~\eqref{errorstatpart}, we also have
\begin{align*}
\P \left[\exists x \in B_{r^2} \, : \, \cu_{\Pa_\mathrm{stat}^k}(x) \neq \cu_{\Pa} (x) \right] & \leq \sum_{z \in 3^k \Zd \cap B_{r^2}} \P \left[\exists x \in z + \cu_k \, : \, \cu_{\Pa_\mathrm{stat}^k}(x) \neq \cu_{\Pa} (x) \right] \\
& \leq \frac{r^{2d}}{3^{dk}} \P \left[\exists x \in \cu_k \, : \, \cu_{\Pa_\mathrm{stat}^k}(x) \neq \cu_{\Pa} (x) \right] \\
& \leq \frac{C r^{2d}}{ 3^{dk}} \exp \left(- C^{-1} 3^k\right).
\end{align*}
In particular, since $k$ has been chosen such that $3^{k} \leq r^\frac 12 < 3^{k+1}$, for each exponent $q > 0$, there exist a constant $C := C(d,\p,\lambda,q) < \infty$ and an exponent $s := s(d,\p,\lambda,q) > 0$ such that
\begin{equation*}
\indc_{\left\{ \exists x \in B_{r^2}\, : \, \cu_{\Pa_\mathrm{stat}^k}(x) \neq \cu_{\Pa} (x) \right\}} \leq \O_s(C r^{-q}).
\end{equation*}
Combining the three previous displays with $q$ chosen large enough, the Cauchy-Schwarz inequality and~\eqref{e.Oprods}, we obtain
\begin{equation*}
\left|   \sum_{ x \in B_{r^2}}  \left(  \nabla \left[ \chi_p \right]_{\Pa}(x) -\nabla \left[ \chi_p \right]_{\Pa_\mathrm{stat}^k}(x) \right) \Phi_{r^2}(x) \right| \indc_{\left\{ \exists x \in B_{r^2} \, : \, \cu_{\Pa_\mathrm{stat}^k}(x) \neq \cu_{\Pa} (x) \right\}} \\ \leq \O_s\left( C r^{-\frac d2} \right),
\end{equation*}
which yields in particular
\begin{equation*}
\E \left[ \left| \sum_{x \in B_{r^2}}  \left(  \nabla \left[ \chi_p \right]_{\Pa}(x) -\nabla \left[ \chi_p \right]_{\Pa_\mathrm{stat}^k}(x) \right) \Phi_{r^2}(x) \right| \indc_{\left\{ \exists x \in B_{r^2} \, : \, \cu_{\Pa_\mathrm{stat}^k}(x) \neq \cu_{\Pa} (x) \right\}} \right] \\ \leq C r^{-\frac d2}.
\end{equation*}
We now focus on estimating the second term on the right-hand side of~\eqref{bigestimatestep1bla1}. With the same computation as the one we just wrote, one obtains
\begin{equation*}
\sum_{x \in B_{r^2}}  \left|   \nabla \left[ \chi_p \right]_{\Pa}(x) -\nabla \left[ \chi_p \right]_{\Pa_\mathrm{stat}^k}(x)  \right|  \leq \O_s \left(C r^{4d} \right).
\end{equation*}
The proof is identical, we only need to replace the term $\sum_{x \in B_{r^2}} \Phi_{r^2}(x)$ by $Cr^{2d}$ in~\eqref{trickintPhiR=1}. Since this result is valid for any radius $r\geq 1$, we obtain, for each integer $n \in \N$,
\begin{align*}
 \sum_{x \in \C_\infty \cap \left( \cu_{n+1} \setminus \cu_{n}\right)}   \left|   \nabla \left[ \chi_p \right]_{\Pa}(x) -\nabla \left[ \chi_p \right]_{\Pa_\mathrm{stat}^k}(x)  \right|  &\leq  \sum_{x \in \C_\infty \cap B_{3^n}}   \left|   \nabla \left[ \chi_p \right]_{\Pa}(x) -\nabla \left[ \chi_p \right]_{\Pa_\mathrm{stat}^k}(x)  \right| \\
	& \leq  \O_s \left( C 3^{4dn} \right).
\end{align*}
We then use the estimate~\eqref{heatkernbound} on the discrete heat kernel and write
\begin{align*}
\lefteqn{ \E \left[   \sum_{x \in \Zd \setminus B_{r^2}} \left|  \nabla \left[ \chi_p \right]_{\Pa}(x) - \nabla \left[ \chi_p \right]_{\Pa_\mathrm{stat}^k}(x) \right|  \Phi_{r^2}(x)  \right]  } \qquad & \\ & 
\leq \sum_{n =\lfloor 2 \log_3(r) \rfloor}^{+\infty} \E \left[ \exp \left( - \frac{3^{n}}{r}\right)r^{-d}  \sum_{x x \in \C_\infty \cap \left( \cu_{n+1} \setminus \cu_{n}\right)}   \left|   \nabla \left[ \chi_p \right]_{\Pa}(x) -\nabla \left[ \chi_p \right]_{\Pa_\mathrm{stat}^k}(x)  \right|  \right]\\ &
\leq \sum_{n =2 \log_3(r)}^{+\infty} \exp \left( - \frac{3^{n}}{r}\right)r^{-d} 3^{4dn} \\ &
\leq C \exp \left(-C^{-1} r \right).
\end{align*}
Combining the estimates of the first and the second terms of the right-hand side completes the proof of Step 1.
\begin{remark}
Most of the estimates of this proof are crude; precise results are not needed. The same argument shows the following (stronger) result: for each exponent $q > 0$, there exists a constant $C := C(d ,\p, \lambda,q ) < \infty$ such that for each radius $r \geq 1$ and each integer $k \in \N$ satisfying $3^k \leq r^{\frac 12} < 3^{k+1}$,
\begin{equation*}
\E \left[ \left|  \Phi_{r^2} *  \nabla  \left[ \chi_p \right]_{\Pa}    -   \Phi_{r^2} * \nabla  \left[ \chi_p \right]_{\Pa_\mathrm{stat}^k} \right| \right]  \leq C r^{-q}.
\end{equation*}
The proof of Lemma~\ref{lemma3.4} only requires the result with the value $q = \frac d2$.
\end{remark}

\medskip

\textit{Step 2.} We prove the main result of this step by combining the stationarity property~\eqref{statcoarsencorrector} with the sublinear growth of the corrector.
First notice that by~\eqref{e.infinitenormcorr}, we have, for each radius $r> 1$,
\begin{equation*}
\osc_{\C_\infty \cap B_r } \chi_p  \leq \O_s\left(C r^{1-\delta}\right).
\end{equation*}
By the Stokes formula, we have, for each integer $n \in \N$,
\begin{equation*}
\left| \sum_{x \in \cu_{nk} } \nabla \left[ \chi_p \right]_{\Pa_\mathrm{stat}^k} (x) \right|  = \left| \sum_{x \in \partial \cu_{nk} } \left[ \chi_p \right]_{\Pa_\mathrm{stat}^k}(x)  \mathbf{n}(x)\right| \leq C 3^{kn(d-1)}\osc_{\C_\infty \cap \cu_{nk} } \chi_p \leq \O_s\left(C  3^{kn(d- \delta)}\right),
\end{equation*}
where the map $x \mapsto \mathbf{n}(x)$ is the discrete outer normal to the cube $\cu_{nk}$. This yields
\begin{equation*}
\left|\E \left[ \sum_{x \in \cu_{nk} } \nabla \left[ \chi_p \right]_{\Pa_\mathrm{stat}^k} (x)  \right] \right| \leq C3^{kn(d- \delta)}.
\end{equation*}
We also have, by the stationarity property~\eqref{statcoarsencorrector},
\begin{align*}
\E \left[\sum_{ x\in \cu_{nk} } \nabla \left[ \chi_p \right]_{\Pa_\mathrm{stat}^k} (x) \right] &
= \sum_{z \in  \left( 3^k\Zd \cap \cu_{kn}\right)} \E \left[  \sum_{x \in z + \cu_{k} } \nabla \left[ \chi_p \right]_{\Pa_\mathrm{stat}^k} (x) \right] \\
& = \frac{3^{dkn}}{3^{dk}}\E \left[ \sum_{ x\in  \cu_{k} } \nabla \left[ \chi_p \right]_{\Pa_\mathrm{stat}^k} (x) \right] .
\end{align*}
Combining the two previous results shows
\begin{equation*}
\left| \E \left[   \sum_{ x \in  \cu_{k}} \nabla \left[ \chi_p \right]_{\Pa_\mathrm{stat}^k} (x) \right] \right| \leq  C3^{dk} 3^{-kn \delta}.
\end{equation*}
Sending $n \rightarrow\infty$ shows
\begin{equation*}
\left| \E \left[  \sum_{ x\in \cu_{k} } \nabla \left[ \chi_p \right]_{\Pa_\mathrm{stat}^k} (x) \right] \right| =0.
\end{equation*}

\medskip

\textit{Step 3.} First notice that
\begin{equation*}
 \E \left[   \left( \Phi_{r^2} * \nabla \left[ \chi_p \right]_{\Pa_{\mathrm{stat}}^k} \right)(0)  \right] = \left( \Phi_{r^2} * \E \left[  \nabla \left[ \chi_p \right]_{\Pa_{\mathrm{stat}}^k}  \right] \right) (0).
\end{equation*}
By~\eqref{statcoarsencorrector}, the function 
\begin{equation*}
f := \left\{\begin{array}{ccc}
 \Z^d& \to &\Rd \\
  x & \mapsto & \E \left[  \nabla \left[ \chi_p \right]_{\Pa_{\mathrm{stat}}^k}(x) \right]\\
\end{array} \right.
\end{equation*}
is $3^{k}\Zd$-periodic. Consequently, there exist complex coefficients $(a_{\mathbf{n}})_{\mathbf{n} \in \cu_{k}}$ such that
\begin{equation*}
f(x) = \sum_{\mathbf{n} \in \cu_{k}} a_{\mathbf{n}} \exp \left( \frac{2i \pi \mathbf{n}  \cdot  x}{3^k}\right) .
\end{equation*}
Using that $\Phi$ is the solution of the discrete heat equation, which implies that the coefficients of its discrete Fourier transform can be explicitly computed, we obtain the identity
\begin{equation*}
 \left( \Phi_{r^2} * \E \left[  \nabla \left[ \chi_p \right]_{\Pa_{\mathrm{stat}}}  \right] \right) (0) =  \sum_{\mathbf{n} \in  \cu_k} a_{\mathbf{n}}  \exp \left( - r^2 \sum_{i=1}^d  2 \left( 1 - \cos \left( \frac{2 \pi \mathbf{n}_i}{k} \right) \right)\right).
\end{equation*}
Notice that the main result of Step 2 is equivalent to the following equality
\begin{equation*}
a_{0} = 0.
\end{equation*}
Using this identity, the Cauchy-Schwarz inequality and the lower bound $1 - \cos a \geq \frac{a^2}{C}$ for $a \in [-\pi , \pi]$ and a universal constant $C$, we obtain
\begin{equation} \label{Fourier1}
 \left| \left( \Phi_{r^2} * \E \left[  \nabla \left[ \chi_p \right]_{\Pa_{\mathrm{stat}}^k}  \right] \right) (0) \right|^2 \leq C \left( \sum_{\mathbf{n} \in  \cu_k \setminus \left\{ 0 \right\} } \left| a_{\mathbf{n}} \right|^2 \right) \left( \sum_{\mathbf{n} \in  \cu_k \setminus \{ 0\}  } \exp \left( - \frac{r^2 \left|\mathbf{n}\right|^2 }{C 3^{2k}}\right) \right).
\end{equation}
Since the integer $k$ was chosen such that $3^k \leq r^{\frac 12} < 3^{k+1}$, we have
\begin{equation} \label{Fourier2}
 \sum_{\mathbf{n} \in  \cu_k \setminus \left\{ 0 \right\}  } \exp \left( -  \frac{r^2\left| \mathbf{n}\right|^2 }{C 3^{2k}}\right) \leq C \exp \left( - C^{-1} r \right).
\end{equation}
Moreover, we have
\begin{equation*}
\sum_{\mathbf{n} \in \cu_k } \left| a_{\mathbf{n}} \right|^2  \leq  \E \left[  \sum_{ x\in \cu_k }    \left| \nabla \left[ \chi_p \right]_{\Pa_{\mathrm{stat}}^k}(x) \right|^2  \right] .
\end{equation*}
With the same computation as the one performed in Step 1, we obtain
\begin{equation*}
\sum_{x \in \cu_k }  \left| \nabla \left[ \chi_p \right]_{\Pa_{\mathrm{stat}}^k}(x) \right|^2 \leq \O_s \left(C |p|^2 3^{4kd} \right).
\end{equation*}
Taking the expectation yields
\begin{equation*}
\sum_{\mathbf{n} \in \cu_k } \left| a_{\mathbf{n}} \right|^2\leq C  3^{4kd}.
\end{equation*}
Combining this inequality with~\eqref{Fourier1} and~\eqref{Fourier2}, we obtain
\begin{equation*}
\left| \left( \Phi_{r^2} * \E \left[  \nabla \left[ \chi_p \right]_{\Pa_{\mathrm{stat}}^k}  \right] \right) (0) \right|^2 \leq C r^{2d} \exp \left( - C^{-1} r \right) \leq C \exp \left( - C^{-1} r \right),
\end{equation*} 
where we increased the value of the constant $C$ in the second inequality to absorb the algebraic growth of the term~$r^{2d}$. This implies in particular the main result of Step 3 and completes the proof of Lemma~\ref{lemma3.4}.
\end{proof}

\subsection{Estimating the resampling of the spatial averages} In this section, we prove Lemma~\ref{lemma3.5} which is recalled below.

\begin{replemma}{lemma3.5}
There exist a constant $C < \infty$ and an exponent $s > 0$, such that
\begin{equation*}
\mathbb{V}[X] \leq \O_s\left(C r^{-d}\right).
\end{equation*}
\end{replemma}

\textit{Proof of Lemma~\ref{lemma3.5}.} We recall Proposition~\ref{resampling} and the notation $X =  \left( \Phi_{r^2} * \nabla \left[ \chi_p \right]_{\Pa} \right) (0) $. Given an environment $\a \in \Omega$ and a bond $e = \left\{ x,y \right\} \in \mathcal{B}_d$, we want to estimate the term $\left( X - X_e' \right)^2$. To this end, one needs to understand how changing the conductance of the bond $e$ can affect the infinite cluster $\C_\infty$ and the partition $\Pa$. This is studied in the following lemma.

\begin{lemma} \label{resamplepartition}
There exist two constants $C_0 := C_0(d) < \infty$ and $C := C(d) < \infty$ such that for each bond $e = \{ x,y \} \in \mathcal{B}_d$, each pair of environments $\a , \tilde{\a} \in \Omega$ which are equal on the set $\mathcal{B}_d \setminus \{ e \}$ and each point $z \in B \left(x , C_0  \size \left(\cu_{\Pa} (x)\right)  \right)$, one has the estimate
\begin{equation*}
\size \left(\cu_{\Pa(\tilde{\a})} (z) \right) \leq C \size \left(\cu_{\Pa(\a)} (x) \right).
\end{equation*}
Moreover, for each point $z \in \Zd \setminus B \left(x , C_0  \size \left(\cu_{\Pa} (x)\right)  \right)$, one has the identity
\begin{equation*}
\size \left(\cu_{\Pa(\tilde{\a})} (z) \right) = \size \left(\cu_{\Pa(\a)} (z) \right).
\end{equation*}
\end{lemma}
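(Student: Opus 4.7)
The proof rests on two structural observations. First, by Definition~\ref{def.goodcube}, the event $\{\cu\text{ is good}\}$ is $\F(\cu)$-measurable, since it depends only on crossability of $\cu$ and its descendants and on open paths inside $\cu$; consequently, the collections of good cubes under $\a$ and $\tilde\a$---which agree off the edge $e=(x,y)$---can only disagree on triadic cubes containing $e$, and hence containing $x$. Second, from the Calder\'on--Zygmund-type stopping-time construction underlying \cite[Proposition~2.1]{AD2}, one extracts a locality property: there is a dimensional constant $K=K(d)$ such that, for every $z\in\Zd$, the cube $\cu_\Pa(z)$ is determined by the goodness statuses of the triadic cubes contained in $B(z,K\size(\cu_\Pa(z)))$.

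For the ``moreover'' statement (equality outside the ball), fix $z$ with $|z-x|>C_0\size(\cu_{\Pa(\a)}(x))$ for a constant $C_0=C_0(d)$ to be chosen. A direct consequence of property~(ii) of Proposition~\ref{p.partitions} is the growth bound $\size(\cu)\leq C\,\dist(\cu,\cu_{\Pa(\a)}(x))+C\size(\cu_{\Pa(\a)}(x))$ for every $\cu\in\Pa(\a)$: a chain of neighboring cubes from $\cu_{\Pa(\a)}(x)$ to $\cu$ multiplies size by at most $3$ per hop, while the chain's summed diameters must exceed the Euclidean distance traversed. Suppose now $\cu_{\Pa(\a)}(z)\neq\cu_{\Pa(\tilde\a)}(z)$; by the locality property, some cube containing $x$ lies within distance $K\max(\size(\cu_{\Pa(\a)}(z)),\size(\cu_{\Pa(\tilde\a)}(z)))$ of $z$, forcing one of those two sizes to be at least $|z-x|/(K+1)$. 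This contradicts the growth bound applied to $\Pa(\a)$ once $C_0$ is chosen large enough in terms of $C$ and $K$, and the corresponding bound for $\Pa(\tilde\a)$ is obtained by bootstrapping: induct on scale, treating $\cu_{\Pa(\tilde\a)}(z)$ at progressively larger distances by using the equality already established at closer points.

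The first inequality of the lemma I interpret as applying to $z\in B(x,C_0\size(\cu_{\Pa(\a)}(x)))$ (the literal reading with ``$\Zd\setminus B$'' appears to be a typographical inversion, since outside the ball the claim $\size(\cu_{\Pa(\tilde\a)}(z))\leq C\size(\cu_{\Pa(\a)}(x))$ would combine with the equality to demand a bound on $\size(\cu_{\Pa(\a)}(z))$ that fails in general far from $x$). For such $z$, either $\cu_{\Pa(\tilde\a)}(z)\subseteq B(x,C_0\size(\cu_{\Pa(\a)}(x)))$, giving $\size(\cu_{\Pa(\tilde\a)}(z))\leq 2C_0\size(\cu_{\Pa(\a)}(x))$ from the ball's diameter, or $\cu_{\Pa(\tilde\a)}(z)$ admits a neighbor $\cu'\in\Pa(\tilde\a)$ meeting the complement of the ball, in which case the ``moreover'' part identifies $\cu'$ with an element of $\Pa(\a)$ and the growth bound for $\Pa(\a)$ yields $\size(\cu')\leq C'C_0\size(\cu_{\Pa(\a)}(x))$, whence $\size(\cu_{\Pa(\tilde\a)}(z))\leq 3\size(\cu')$ by property~(ii) applied inside $\Pa(\tilde\a)$. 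The principal obstacle is the locality property of the partition's construction: Proposition~\ref{p.partitions} is stated as a black-box existence result, so locality must be read off from the specific stopping rule in the proof of \cite[Proposition~2.1]{AD2}; once that verification is done, every remaining step is a geometric iteration of property~(ii).
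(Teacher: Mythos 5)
Your reading of the first claim as applying to $z$ in $B(x,C_0\size(\cu_{\Pa(\a)}(x)))$ rather than its complement is correct; the printed statement has inverted the ball. But your argument for the ``moreover'' part has a genuine gap, and since you derive the first claim from the second, the whole proof is incomplete.

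The abstract locality property together with the growth bound from Proposition~\ref{p.partitions}(ii) cannot produce the contradiction you want. Making your arithmetic explicit: disagreement gives $\max(\size(\cu_{\Pa(\a)}(z)),\size(\cu_{\Pa(\tilde\a)}(z)))\geq |z-x|/(K+1)$, while the growth bound only gives $\size(\cu_{\Pa(\a)}(z))\leq C'|z-x|+C'\size(\cu_{\Pa(\a)}(x))$ with $C'\geq 2$ (a chain whose size triples at each hop realizes this). Since $C'\geq 1>1/(K+1)$ for any $K\geq 1$, both bounds can hold simultaneously for all $|z-x|$, so no choice of $C_0$ forces a contradiction, and the scale-by-scale bootstrap you invoke does not supply the missing estimate on $\size(\cu_{\Pa(\tilde\a)}(z))$. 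There is also a structural problem with the locality statement itself: the stopping rule that determines $\cu_\Pa(z)$ inspects all ancestors of $\cu_\Pa(z)$, which are \emph{not} contained in $B(z,K\size(\cu_\Pa(z)))$, and such an ancestor $\cu'$ can have $e\in 3\cu'$ even when $z$ is far from $x$. The key ingredient that is missing --- and that the paper records as its observations (3), (4) and (5) --- is the geometric fact that resampling one edge cannot change the goodness of \emph{any} triadic cube of size $\geq C\size(\cu_{\Pa(\a)}(x))$, even one with $3\cu\ni e$: one edge cannot create or destroy an isolated component of diameter comparable to $\size(\cu_{\Pa(\a)}(x))$, and large cubes meeting $\cu_{\Pa(\a)}(x)$ are crossable by clusters routed around $\cu_{\Pa(\a)}(x)$ and hence around $e$. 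Once that is in hand, the cubes whose goodness actually changes are confined to scales $<C\size(\cu_{\Pa(\a)}(x))$ and to a neighborhood of $x$ of comparable radius, and the stopping rule then yields the equality for $z$ outside the ball. Without that ingredient, the locality/growth argument does not close. A minor further imprecision: goodness of $\cu=z+\cu_n$ is $\F(z+\cu_{n+1})$-measurable by~\eqref{e.goodlocal}, not $\F(\cu)$-measurable, since the subcubes $\cu'$ appearing in Definition~\ref{def.goodcube}(i) may protrude beyond $\cu$.
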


\begin{proof}[Proof of Lemma~\ref{resamplepartition}.]
The main ingredients of the proof are listed below:
\begin{enumerate}
\item If a good cube $\cu \in \Pa_*$ is such that $3 \cu \cap \left\{ x,y \right\} = \emptyset$ then $\cu$ is a good cube under the environment~$\tilde{\a}$.
\item By the properties of the partition $\Pa$, every cube $\cu \in \Pa$ which does not contain the points $x$ and $y$ is crossable under the environment $\tilde{\a}$. The predecessors of $\cu_\Pa (x)$ and $\cu_\Pa(y)$ are also crossable under the environment $\tilde{\a}$.
\item By resampling the conductance of one bond, we cannot create an isolated cluster of size larger than $C \size(\cu_\Pa(x))$, for some constant $C_0:= C_0(d) < \infty$. In particular, there exists a constant $C:= C(d) < \infty$ such that every good cube of size larger than $C \size\left( \cu_\Pa(x) \right)$ under the environment $\a$ satisfies Property~(ii) of Definition~\ref{def.goodcube} under the environment $\tilde{\a}$.
\item There exists a constant $C :=C(d) < \infty$ such that every cube of size larger than $C \size\left( \cu_\Pa(x) \right)$ intersecting the cube $\cu_\Pa(x)$ is crossable by a cluster which does not intersect the cube $\cu_\Pa(x)$.
\item  If, for a point $y \in B \left(x , C_0  \size \left(\cu_{\Pa} (x)\right)  \right)$, the size of the cube $\cu_{\Pa}(y)$ is larger than $C \size \left(\cu_{\Pa} (x)\right)$, then the point $x$ belongs to $\cu_{\Pa}(y)$ or one of its neighbors and thus $ \size \left( \cu_{\Pa}(y) \right) \leq C \size \left( \cu_{\Pa}(x) \right)$.
\end{enumerate}
Combining these properties shows that every good cube $\cu$ under the environment $\a$ satisfying the estimate $\size(\cu) \geq C \size\left( \cu_\Pa(x) \right)$ is a good cube under the environment $\tilde{\a}$. It is then straightforward to see from the previous remarks and the construction of the partition $\Pa$ in the proof of Proposition~\ref{p.partitions} that the conclusion of the lemma is valid.
\end{proof}

\begin{sloppypar}To estimate the random variable $\left( X - X_e' \right)^2$, we introduce an extended probability space by doubling the variables $\left(\Omega', \F', \P'\right) = \left( \Omega \times \Omega , \F \otimes \F, \P \otimes \P \right)$. Given an environment $(\a(e'), \tilde{\a}(e'))_{e' \in \mathcal{B}_d} \in \Omega'$, we denote by $\mathrm{pr}_1$ (resp. $\mathrm{pr}_2$) the first (resp. second) projection, i.e., $\mathrm{pr}_1\left((\a(e'), \tilde{\a}(e'))_{e' \in \mathcal{B}_d}\right) = (\a(e'))_{e' \in \mathcal{B}_d}$ (resp. $\mathrm{pr}_2\left((\a(e'), \tilde{\a}(e'))_{e' \in \mathcal{B}_d}\right) =(\tilde{\a}(e'))_{e' \in \mathcal{B}_d}$). Any random variable $Z$ defined on the space $\left(\Omega, \F, \P\right)$ can be seen as a random variable defined on the extended space $\left(\Omega', \F', \P'\right)$ by the formula $Z = Z \circ \mathrm{pr}_1$, i.e.,
$$Z \left( (\a(e'), \tilde{\a}(e'))_{e' \in \mathcal{B}_d} \right)  =  Z \left( (\a(e'))_{e' \in \mathcal{B}_d} \right).$$
Given an enlarged environment $\left(\a(e'),\tilde{\a}(e') \right)_{e' \in \mathcal{B}_d}$, we denote by $\a$ the environment $\left(\a(e') \right)_{e' \in \mathcal{B}_d}$ and by $\a^{e}$ the environment $\left( (\a(e'))_{e'\in \mathcal{B}_d \setminus \{e \}}, \tilde{\a}(e) \right)$. Similarly, given a random variable $Z$ defined on the space $\Omega$ and a bond $e \in \mathcal{B}_d$, we denote by $Z^{e}$ the random variable defined on the space $(\Omega', \F',\P')$ by the formula
\begin{equation} \label{eq:TV09572302}
Z^{e}\left(\left(\a(e'),\tilde{\a}(e') \right)_{e' \in \mathcal{B}_d} \right) := Z \left( \a^{e} \right).
\end{equation}
We denote by $\Pa^{e}$ and $\C_\infty^{e}$ the partition of good cubes and the infinite cluster under the environment~$\a^{e}$.
It follows from the previous definitions that, for almost every environment $\a \in \Omega$,
\begin{equation*}
    X(\a) - X_e'(\a)= \int_\Omega  \left( \Phi_{r^2} * \left( \nabla \left[ \chi_p \right]_{\Pa} (\a , \tilde \a) - \nabla \left[ \chi_p^e \right]_{\Pa^e}(\a , \tilde \a) \right) \right) (0)  \, \mathrm{d}\P(\tilde{\a}).
\end{equation*}
All the random variables in the proof of this section are considered as random variables on the enlarged probability space $(\Omega',\F',\P')$ unless explicitly stated.\end{sloppypar}

We denote by $\E'$ the expectation with respect to the measure $\P'$. Given a constant $C>0$, an exponent $s > 0$ and a nonnegative random variable $Z : \Omega' \mapsto \R$, we write
\begin{equation*}
Z \leq \O_s'(C) \mbox{ if and only if } \E' \left[ \exp \left( \left( \frac{Z}{C} \right)^s \right) \right] \leq 2.
\end{equation*} 
Any random variable $Z$ defined on the space $( \Omega , \F , \P)$ satisfying $Z \leq \O_s(C)$ satisfies, as a random variable defined on the extended space $( \Omega' , \F' , \P')$, the inequality $Z \leq \O_s'(C)$. From the definition~\eqref{eq:TV09572302}, we see that, for each bond $e \in \mathcal{B}_d$,
\begin{equation}\label{eq:resampleZZe}
    Z \leq \O_s (C) \implies Z^e \leq \O_s'(C).
\end{equation}
The estimate~\eqref{eq:resampleZZe} is frequently used when the random variable $Z$ is equal to the size of a cube of the partition $\Pa$ (Proposition~\ref{p.partitions}), the minimal scale $\X$ above which the regularity theory applies (Proposition~\ref{p.regularitytheory}), or the minimal scale $\mathcal{M}_t(\Pa)$ associated to the partition $\Pa$ (Proposition~\ref{p.partitions}): we have, for each point $x \in \Zd$ and each bond $e \in \mathcal{B}_d$,
\begin{equation*}
\left\{ \begin{aligned}
    \size \left( \cu_{\Pa^e}(x) \right) = \left( \size \left( \cu_{\Pa}(x) \right) \right)^e  \leq \O_s' (C), \\
    \X^e \leq \O_s'(C), \\
    \mathcal{M}_t(\Pa^e) = \left(\mathcal{M}_t(\Pa)\right)^e \leq \O_s'(C).
\end{aligned} \right.
\end{equation*}
To prove Lemma~\ref{lemma3.5}, we prove the estimate
\begin{equation} \label{statementmainestimate}
\sum_{e \in \mathcal{B}_d} \left|  \left( \Phi_{r^2} * \left( \nabla \left[ \chi_p \right]_{\Pa} - \nabla \left[ \chi_p^e \right]_{\Pa^e} \right) \right) (0) \right|^2  \leq \O_s' \left( Cr^{-d} \right).
\end{equation}
The inequality~\eqref{statementmainestimate} is sufficient to prove Result 2; indeed with the same argument as in~\cite[Lemma 2.3]{armstrong2017quantitative}, we have
\begin{align*}
\lefteqn{ \E \left[ \exp \left( \left( \frac{\sum_{e \in \mathcal{B}_d}  \left(X - X_e'\right)^2}{Cr^{-d}} \right)^s \right) \right] } \qquad & \\ &
= \int_\Omega  \exp \left( \left(  \frac{\sum_{e \in \mathcal{B}_d}  \left| \int_\Omega  \left( \Phi_{r^2} * \left( \nabla \left[ \chi_p \right]_{\Pa} - \nabla \left[ \chi_p^e \right]_{\Pa^e} \right) \right) (0)  \, \mathrm{d} \P(\tilde{\a})\right|^2  }{Cr^{-d}} \right)^s \right) \, \mathrm{d} \P(\a) \\
& \leq \int_\Omega  \exp \left( \left( \int_\Omega \frac{\sum_{e \in \mathcal{B}_d}  \left|  \left( \Phi_{r^2} * \left( \nabla \left[ \chi_p \right]_{\Pa} - \nabla \left[ \chi_p^e \right]_{\Pa^e} \right) \right) (0) \right|^2}{Cr^{-d}} \, \mathrm{d} \P(\tilde{\a}) \right)^s \right) \, \mathrm{d} \P(\a) \\
& \leq C  \int_\Omega \int_\Omega  \exp \left( \left( \frac{\sum_{e \in \mathcal{B}_d}  \left|  \left( \Phi_{r^2} * \left(  \nabla \left[ \chi_p \right]_{\Pa} -  \nabla \left[ \chi_p^e \right]_{\Pa^e} \right) \right) (0) \right|^2}{Cr^{-d}} \right)^s \right) \, \mathrm{d} \P(\a) \mathrm{d} \P(\tilde{\a}) \\
& \leq C \E'\left[ \exp \left( \left( \frac{\sum_{e \in \mathcal{B}_d}  \left|   \left( \Phi_{r^2} * \left( \nabla \left[ \chi_p \right]_{\Pa} - \nabla \left[ \chi_p^e \right]_{\Pa^e} \right) \right) (0) \right|^2}{Cr^{-d}} \right)^s \right) \right] \\
& \leq 2C.
\end{align*}
This yields, after redefinition of the constant $C$,
\begin{equation*}
\sum_{e \in \mathcal{B}_d}  \left(X - X_e'\right)^2 \leq \O_s' \left(Cr^{-d} \right).
\end{equation*}
Before starting the proof of~\eqref{statementmainestimate}, we select one of the correctors $ \chi_p^e$ arbitrarily (we recall that they are defined up to a constant). As we are interested in the gradient of the corrector, the value of the constant is not important. We want to give a meaning to the function~$\left[ \chi_p^e \right]_\Pa$
as a random variable defined on the extended probability space $\Omega'$. 

\smallskip

Since we do not necessarily have the identity $\C_\infty = \C_\infty^e$, we cannot simply write $\left[ \chi_p^e \right]_\Pa (z) =  \chi_p^e \left( z \left( \cu_\Pa (z) \right) \right)$. Nevertheless, since the two environments $\left( (\a(e'))_{e' \in \mathcal{B}_d \setminus \{ e\}}, \tilde{\a}(e) \right)$ and $ (\a(e'))_{e' \in \mathcal{B}_d}$ only differ by one bond, we have either $\C_\infty \subseteq \C_\infty^e$ or $\C_\infty^e \subset \C_\infty$. In the former case, we can define $\left[ \chi_p^e \right]_\Pa (z) =  \chi_p^e \left( z \left( \cu_\Pa (z) \right) \right)$. In the latter case, the cluster $\C_\infty \setminus \C_\infty^e$ is connected to $\C_\infty$ by the bond $e$. Without loss of generality, we denote it by $e = \{x,y\}$ and assume that $x \in \C_\infty^e$. One can then check that the function
\begin{equation} \label{defofwext}
w  := \left\{ \begin{array}{ccc}
 \C_\infty & \to &\R \\
 z & \mapsto & \chi_p^e(z) \indc_{\{ z \in \C_\infty^e\}}+ \left( p \cdot (z - x) + \chi_p^e (z) \right) \indc_{\{ z \notin \C_\infty^e \}}\\
\end{array} \right.
\end{equation}
is a solution of the equation
\begin{equation*}
- \nabla \cdot  \a \nabla \left( p\cdot x + w \right) = 0 ~\mbox{in}~ \C_\infty
\end{equation*}
and more precisely that the map $x \mapsto p \cdot x + w(x)$ belongs to the space $\A_1(\C_\infty).$ In particular, this gives the identity
$
w = \chi_p.
$
We thus define
\begin{equation*}
\left[ \chi_p^e \right]_\Pa = \left[ w \right]_\Pa.
\end{equation*}
To prove the estimate~\eqref{statementmainestimate}, we use the random variable $\left[ \chi_p^e \right]_\Pa$ and split the sum into two terms
\begin{multline} \label{splitestimate}
\left|   \left( \Phi_{r^2} * \left( \nabla \left[ \chi_p \right]_{\Pa} - \nabla \left[ \chi_p^e \right]_{\Pa^e} \right) \right) (0) \right|^2  \\ \leq  \underbrace{2 \left|  \left( \Phi_{r^2} * \left( \nabla \left[ \chi_p^e \right]_{\Pa} - \nabla \left[ \chi_p^e \right]_{\Pa^e} \right)(0) \right)  \right|^2}_{\eqref{splitestimate}-(i)} + \underbrace{2 \left| \left( \Phi_{r^2} * \left(  \nabla \left[ \chi_p \right]_{\Pa} -  \nabla \left[ \chi_p^e \right]_{\Pa} \right) \right) (0)\right|^2}_{\eqref{splitestimate}-(ii)}  .
\end{multline}
We estimate the two terms in the right side in the two steps below.

\smallskip

\textit{Step 1. Estimate of the term~\eqref{splitestimate}-(i).} We use  Lemma~\ref{resamplepartition} and Proposition~\ref{l.coarseLs} with the exponent $s=1$ to write
\begin{align*}
\lefteqn{ \left|  \left( \Phi_{r^2} * \left( \nabla \left[ \chi_p^e \right]_{\Pa} - \nabla \left[ \chi_p^e \right]_{\Pa^e} \right) \right) (0) \right|^2 } \qquad & \\ &
\leq  \left( \sum_{z \in  \Zd \cap B \left(x , C  \size \left(\cu_{\Pa}(x) \right) \right)} \left| \nabla \left[ \chi_p^e \right]_{\Pa}(z)\right| + \left| \nabla \left[ \chi_p^e \right]_{\Pa^e}(z) \right| \right)^2  \sup_{z \in B \left(x , C  \size \left(\cu_{\Pa}(x) \right) \right)}  \Phi_{r^2}^2(z) \\ &
\leq C  \left( \sum_{z \in \C_\infty^e \cap B \left(x , C  \size \left(\cu_{\Pa}(x) \right) \right) }  \size \left( \cu_{\Pa}(x) \right)^{d-1} \left( \left| \nabla \chi_p^e\right| (z) + 1 \right) \right)^2  \sup_{z \in B \left(x , C  \size \left(\cu_{\Pa}(x) \right) \right)}  \Phi_{r^2}^2(z) .
\end{align*}
The term "+1" on the right-hand side comes from the assumption $|p| = 1$ combined with the definition of the map $w$ stated in~\eqref{defofwext} (in the case $\C_\infty^e \subset \C_\infty$). We deduce that
\begin{multline} \label{eq:TV11331602}
\left| \left( \Phi_{r^2} * \left( \nabla \left[ \chi_p^e \right]_{\Pa} - \nabla \left[ \chi_p^e \right]_{\Pa^e} \right) \right) (0)  \right|^2 \\
\leq C  \size \left( \cu_{\Pa}(x) \right)^{3d-2} \sup_{z \in \C_\infty^e \cap B \left(x , C  \size \left(\cu_{\Pa}(x) \right) \right) } \left( \left| \nabla \chi_p^e \right|^2 (z) + 1 \right)  \sup_{z \in B \left(x , C  \size \left(\cu_{\Pa}(x) \right) \right)}  \Phi_{r^2}^2(z).
\end{multline}
By the heat kernel bound~\eqref{heatkernbound} stated in Definition~\ref{def.heatkerneldisc}, there exists a constant $C(d) < \infty$ such that, for each point $z \in \Zd$,
\begin{equation} \label{trickexponentialpoly}
\Phi_{r^2} \leq  \frac{C}{r^d} \left( \left(\frac{\left| z\right|}{r}\right)^{-\frac{d+1}{2}} \wedge 1 \right).
\end{equation}
We denote by $\zeta(z) := \left( |z|^{-\frac{d+1}{2}} \wedge 1 \right)$ and by $\zeta_r(z) := \frac 1{r^d} \zeta\left( \frac{z}{r} \right)$. We use the function $\zeta$ instead of the heat kernel $\Phi_{r^2}$ to complete the estimate of the term~\eqref{splitestimate}-(i) because it satisfies the inequality
\begin{equation*}
 \sup_{z \in B \left(x , C  \size \left(\cu_{\Pa}(x) \right) \right)}  \zeta_r^2(z) \leq C \size \left(\cu_{\Pa}(x)\right)^{d+1}  \inf_{z \in B \left(x , C  \size \left(\cu_{\Pa}(x) \right) \right)} \zeta_r^2(z).
\end{equation*}
In particular, the estimate~\eqref{eq:TV11331602} can be rewritten
\begin{multline*}
\left|\left( \Phi_{r^2} * \left( \nabla \left[ \chi_p^e \right]_{\Pa} - \nabla \left[ \chi_p^e \right]_{\Pa^e} \right) \right) (0)   \right|^2 \\ 
\leq C  \size \left( \cu_{\Pa}(x) \right)^{4d-1}  \sum_{z \in \C_\infty^e \cap B \left(x , C  \size \left(\cu_{\Pa}(x) \right) \right) } \zeta_r(z)^2 \left( \left| \nabla \chi_p^e \right|^2 (z) + 1 \right).
\end{multline*}
Summing over all the bonds $e \in \mathcal{B}_d$ gives
\begin{align} \label{previouscomputation}
\lefteqn{ \sum_{e \in \mathcal{B}_d} \left|\left( \Phi_{r^2} * \left( \nabla \left[ \chi_p^e \right]_{\Pa} - \nabla \left[ \chi_p^e \right]_{\Pa^e} \right) \right) (0)   \right|^2} \qquad & \\ & 
\leq C \sum_{x \in \Zd}  \size \left( \cu_{\Pa}(x) \right)^{4d-1} \sum_{z \in \C_\infty^e \cap B \left(x , C  \size \left(\cu_{\Pa}(x) \right) \right) } \zeta_r(z)^2 \left( \left| \nabla \chi_p^e \right|^2 (z) +1 \right) \notag \\
& \leq C \sum_{z \in \C_\infty^e} \zeta_r(z)^2 \left( \left| \nabla \chi_p^e\right|^2 (z) + 1 \right) \left( \sum_{x \in \Zd} \size \left( \cu_\Pa (x) \right)^{4d-1} \indc_{\left\lbrace z \in B \left(x, C \size \left( \cu_\Pa (x) \right) \right) \right\rbrace} \right). \notag
\end{align}
Using the estimate $\size \left( \cu_\Pa (x) \right) \leq \O_s'(C)$, valid for any point $x \in \Zd$, we obtain
\begin{equation*}
\indc_{\left\lbrace z \in B \left(x, C \size \left( \cu_\Pa (x) \right) \right) \right\rbrace} \leq C \frac{ \size \left( \cu_\Pa (x) \right)^{d+1}}{\left(|x-z| \vee 1 \right)^{d+1}} \leq \frac{\O_s' (C)}{\left(|x-z| \vee 1 \right)^{d+1}}.
\end{equation*}
Since the map $z \mapsto \left( |z|\vee 1\right)^{-d-1}$ is summable on $\Zd$, we use the inequality~\eqref{e.Oavgs} to obtain
\begin{equation} \label{errortermbounded}
\sum_{x \in \Zd} \size \left( \cu_\Pa (x) \right)^{4d-1} \indc_{\left\lbrace z \in B \left(x, C \size \left( \cu_\Pa (x) \right) \right) \right\rbrace} \leq \O_s'(C).
\end{equation} 
By Proposition~\ref{lipboundcorr} and the implication~\eqref{eq:resampleZZe}, we have the Lipschitz bound on the corrector
\begin{equation} \label{nablachibounded}
\left| \nabla \chi_p^e (y) \right|  \indc_{\{ y \in \C_\infty^e \}} \leq \O_s'(C),
\end{equation}
which implies
\begin{equation*}
\sum_{e \in \mathcal{B}_d} \left|\left( \Phi_{r^2} * \left( \nabla \left[ \chi_p^e \right]_{\Pa} - \nabla \left[ \chi_p^e \right]_{\Pa^e} \right) \right) (0) \right|^2  \leq C \sum_{y \in \Zd} \zeta_r(y)^2 \O_s'(C).
\end{equation*}
We use the estimate~\eqref{e.Oavgs} and the inequality $ \sum_{y \in \Zd} \zeta_r(y)^2 \leq C r^{-d}$ to obtain
\begin{equation*}
\sum_{e \in \mathcal{B}_d} \left| \left( \Phi_{r^2} * \left( \nabla \left[ \chi_p^e \right]_{\Pa} - \nabla \left[ \chi_p^e \right]_{\Pa^e} \right) \right) (0) \right|^2  \leq \O_s' \left( \frac{C}{r^d} \right).
\end{equation*}
This completes the proof of the estimate of the first term on the right-hand side of~\eqref{splitestimate}.

\smallskip

\textit{Step 2. Estimate of the term~\eqref{splitestimate}-(ii).} In this step, we prove the inequality
\begin{equation*}
\left| \left( \Phi_{r^2} * \left( \nabla \left[ \chi_p \right]_{\Pa} - \nabla \left[ \chi_p^e \right]_{\Pa} \right) \right) (0) \right|^2  \leq \O_s'\left( \frac{C}{r^d}\right).
\end{equation*}
To prove this estimate, we distinguish three cases. We recall the two endpoints of the bond $e$ are $x$ and $y$; they are fixed through the proof.

\textbf{Case 1.} ($x \notin \C_\infty$ and $y \notin \C_\infty$) or $\a = \a^e$. In that case, one has the identities $\C_\infty = \C_\infty^e$ and $\nabla \chi_p = \nabla \chi_p^e$. They imply
\begin{equation*}
\left|\left( \Phi_{r^2} * \left( \nabla \left[ \chi_p \right]_{\Pa} - \nabla \left[ \chi_p^e \right]_{\Pa} \right) \right) (0)  \right|^2 =0.
\end{equation*}

\textbf{Case 2.} $\C_\infty \neq \C_\infty^e$. In that case, we have $\left[ \chi_p \right]_{\Pa}  = \left[\chi_p^e \right]_{\Pa}$ by definition of the latter quantity. It implies
\begin{equation*}
\left|\left( \Phi_{r^2} * \left( \nabla \left[ \chi_p  \right]_{\Pa} - \nabla \left[ \chi_p^e \right]_{\Pa} \right) \right) (0) \right|^2 =0.
\end{equation*}

\textbf{Case 3.} $x,y \in \C_\infty$ and $\C_\infty = \C_\infty^e$ and $\a \neq \a^e$. By definitions of the correctors $\chi_p$ and $\chi_p^e$, we have the identity
\begin{equation} \label{eq.corrcorre}
- \nabla \cdot \left( \a \nabla \left( \chi_p - \chi_p^e \right) \right) = \left( \a - \a^e \right) \left( \left\{x,y \right\} \right) \left( p \cdot (x-y) + \chi_p^e(x) - \chi_p^e(y) \right)  \left( \delta_x -   \delta_y \right) \hspace{4mm} \mbox{in}~\C_\infty.
\end{equation}
We solve~\eqref{eq.corrcorre} by using Proposition~\ref{p.green} and recall the notation $\nabla G(e , \cdot)$ introduced there. Note that the function $$\chi_p - \chi_p^e -  \left( \a - \a^e \right) (x,y) \left( p \cdot (x-y) + \chi_p^e(x) - \chi_p^e(y) \right) \nabla G ( e, \cdot) ~ \mbox{is}~ \a-\mbox{harmonic}.$$ By the sublinear growth of the corrector stated in~\eqref{minscale.sublin}, the $L^2(\C_\infty)$-bound stated in~\eqref{gradientgreenbounded} on the gradient of the function $\nabla G(e,\cdot)$ and a version of the Poincar\'e inequality on the percolation cluster (see for instance the proof of Proposition~\ref{p.regularitytheory}), one can show that the function $$\chi_p - \chi_p^e -  \left( \a - \a^e \right) (x,y) \left( p \cdot (x-y) + \chi_p^e(x) - \chi_p^e(y) \right)  \nabla G ( e, \cdot) ~\mbox{has a sublinear growth.}$$ This implies that this function is constant. In particular, it proves the identity
\begin{equation} \label{eq:TV08361802}
\nabla \chi_p - \nabla \chi_p^e =  \left( \a - \a^e \right) (x,y) \left( p \cdot (x-y) + \chi_p^e(x) - \chi_p^e(y) \right) \nabla  \nabla G(e,\cdot).
\end{equation}
We now estimate the right side of~\eqref{eq:TV08361802} thanks to the Lipschitz bound on the corrector stated in Proposition~\ref{lipboundcorr} and~\eqref{nablachibounded}. We distinguish two cases depending on the value of the conductance~$\a^e(e)$:
\begin{itemize}
\item If $\a^e(e) = \tilde{\a}(e) \neq 0,$ then~\eqref{nablachibounded} implies the estimate
\begin{equation} \label{eq:TV08432802}
\left| \chi_p^e(x) - \chi_p^e(y) \right| \indc_{\{ y \in \C_\infty^e, \tilde{\a}(e) \neq 0  \}}\leq \left| \nabla \chi_p^e (y) \right|  \indc_{\{ y \in \C_\infty^e \}} \leq C \left( \X^e(x) \right)^{d/2};
\end{equation}
\item If $\a^e(e) = \tilde{\a}(e) = 0$, then there exists a path going from $x$ to $y$ which stays in the cube $\cu_{\Pa^e} (x)$ and its neighbors (its neighbors because we may not have $\cu_{\Pa^e} (x) = \cu_{\Pa^e} (y)$ or we may have $x, y \in \C_\infty \setminus \C_* \left(\cu_{\Pa^e} (x)\right)$). Combining this remark with Lemma~\ref{resamplepartition}, we obtain
\begin{align*}
\left| \chi_p^e(x) - \chi_p^e(y) \right| & \leq C \sum_{z \in \C_\infty^e \cap B \left(x, C \size \left(\cu_\Pa(x) \right)\right)} \left| \nabla \chi_p^e \right|(z) \\
& \leq C  \size \left(\cu_\Pa (x) \right)^{d} \left\| \nabla \chi_p^e\right\|_{\underline{L}^2 \left( \C_\infty^e \cap B\left(x, C \size \left(\cu_\Pa(x) \right) \right) \right)}.
\end{align*}
Using the Lipschitz bounds on the corrector, we deduce that
\begin{equation} \label{eq:TV08421802}
\left| \chi_p^e(x) - \chi_p^e(y) \right|  \indc_{\{ x, y \in \C_\infty^e, \tilde{\a}(e) = 0  \}} \leq \size \left(\cu_\Pa (x) \right)^{d} \left( \X^e(x) \right)^{d/2}.
\end{equation}
\end{itemize}
Combining the estimates~\eqref{eq:TV08432802} and~\eqref{eq:TV08421802}, we obtain the inequality
\begin{equation} \label{estimatecorrectorgreensfunction1}
\left|\nabla \left( \Phi_{r^2} * \left( \left[ \chi_p \right]_{\Pa} - \left[ \chi_p^e \right]_{\Pa} \right) \right) (0) \right|^2  \\ \leq  \left| \nabla \left( \Phi_{r^2} * \left( \left[ \nabla G(e,\cdot) \right]_{\Pa} \right) \right) (0) \right|^2 \size \left(\cu_\Pa (x) \right)^{2d} \left( \X^e(x) \right)^{d}.
\end{equation}
In the next step of the proof, we treat the coarsening in the right-hand side of~\eqref{estimatecorrectorgreensfunction1}. To this end, we prove that there exist a constant $C:= C(d) < \infty$ and a (random) vector field $\gamma_r : E_d \rightarrow \R^d$ satisfying the estimate, for each edge $e'=(x',y') \in E_d$,
\begin{equation*}
\left| \gamma_r (e') \right| \leq C \size(\cu_\Pa(x'))^{2d}  \zeta_r \left(x' \right)
\end{equation*}
such that for each function $u : \C_\infty \mapsto \R$ satisfying $\left\langle \nabla u , \nabla u \right\rangle_{\C_\infty} < \infty$,
\begin{equation} \label{propertygammaR}
 \left( \Phi_{r^2} * \nabla \left[u \right]_{\Pa}  \right) (0) =  \left\langle \gamma_r, \nabla u   \right\rangle_{\C_\infty}.
\end{equation}
We first write
\begin{equation*}
\left( \Phi_{r^2} * \nabla \left[u \right]_{\Pa} \right) (0)   = \sum_{z \in \Zd}  \Phi_{r^2} \left( z  \right) \nabla \left[u \right]_{\Pa} (z).
\end{equation*}
Given two neighboring points $z , z' \in \Zd$, note that the values $\left[u \right]_{\Pa} (z)$ and $\left[u \right]_{\Pa} (z')$ are only different if the points $z$ and $z'$ belong to two different cubes of the partition $\Pa$. In that case, we have
\begin{align*}
 \left[u \right]_{\Pa} (z ) - \left[u \right]_{\Pa} (z' )
& = u(\zbar(\cu_\Pa(z))) - u(\zbar(\cu_\Pa(z'))).
\end{align*}
Recall that there exists a path between $\zbar(\cu_\Pa(z))$ and $\zbar(\cu_\Pa(z'))$ which lies entirely in the set $\cu_\Pa(z) \cup \cu_\Pa(z')$. We denote this path by $p_{z,z'} \subseteq E_d$. Summing over the edges along this path, we find that 
\begin{equation*}
u(\zbar(\cu_\Pa(z))) -u(\zbar(\cu_\Pa(z'))) = \sum_{e' \in p_{z,z'}} \nabla u (e') = \sum_{e' \in E_d} \nabla u (e') \indc_{\{ e' \in p_{z,z'}\}}.
\end{equation*}
If the points $z$ and $z'$ belong to the same cube of the partition $\Pa$, we keep the same notation with the convention $p_{z,z'} = \emptyset$. Consequently, we have for each pair of neighboring points $z,z' \in \Zd$,
\begin{equation*}
\left[u \right]_{\Pa} (z) - \left[u \right]_{\Pa} (z' ) = \sum_{e' \in E_d} \nabla u (e') \indc_{\{ e' \in p_{z,z'}\}}.
\end{equation*}
Using this formula, we can rewrite
\begin{equation*}
\left( \Phi_{r^2} * \nabla \left[u \right]_{\Pa}  \right) (0) =  \left\langle \gamma_r,   \nabla u  \right\rangle_{\C_\infty},
\end{equation*}
where $\gamma_r$ is the vector-valued random field defined by the formula, for each edge $e'  \in E_d$,
\begin{equation*}
\gamma_r (e') =\begin{pmatrix}
\sum_{z \in \Zd}  \Phi_{r^2} \left( z \right) \indc_{\{ e' \in p_{z,z + \e_1}\}} \\[3mm]
\vdots \\[3mm]
\sum_{z \in \Zd}  \Phi_{r^2} \left( z \right) \indc_{\{ e' \in p_{z,z + \e_d}\}} \\
\end{pmatrix} .
\end{equation*}
For each pair of neighboring points $z,z' \in \Zd$ such that $\cu_\Pa (z) \neq \cu_\Pa (z')$, the path between the points $\zbar(\cu_\Pa(z))$ and $\zbar(\cu_\Pa(z'))$ lies in the set $\cu_\Pa(z) \cup \cu_\Pa(z')$. In particular an edge $e' = \left(x',y' \right)$ belongs to the path $p_{z,z'}$ only if $\dist \left( z , \partial \cu_\Pa(x') \right) \leq 1$. This argument implies the inequality
\begin{equation} \label{eq:TV10142302}
\left| \gamma_r (e') \right| \leq C \sum_{z \, : \, \dist \left( z , \partial \cu_\Pa(x') \right) \leq 1} \Phi_{r^2} \left( z  \right).
\end{equation}
We use the inequality~\eqref{trickexponentialpoly} on the discrete heat kernel and note that function $\zeta$ satisfies the estimate, for each triadic cube $\cu \in \T$,
\begin{equation*}
\sup_{\cu } \zeta_r  \leq C \size \left( \cu \right)^{\frac{d+1}{2}} \inf_{\cu} \zeta_r.
\end{equation*}
As a consequence of the two previous displays, we can rewrite the estimate~\eqref{eq:TV10142302}
\begin{align} \label{estimateeta}
\left| \gamma_r (e') \right| & \leq  \sum_{z \, : \, \dist \left( z , \partial \cu_\Pa(x') \right) \leq 1} \Phi_{r^2} \left(z \right) \\
						& \leq C \size(\cu_\Pa(x'))^{d-1}  \sup_{z \, : \, \dist \left( z , \partial \cu_\Pa(x') \right) \leq 1} \zeta_r  \notag \\
						& \leq C \size(\cu_\Pa(x'))^{2d}  \zeta_r (x' ), \notag
\end{align}
which is the desired inequality. The proof of ~\eqref{propertygammaR} is complete.

\smallskip 

Applying the property~\eqref{propertygammaR} to the function $u =  \nabla  G(e,\cdot) $, the inequality~\eqref{estimatecorrectorgreensfunction1} becomes
\begin{equation*}
\left|\left( \Phi_{r^2} * \left( \nabla \left[ \chi_p \right]_{\Pa}  - \nabla \left[ \chi_p^e \right]_{\Pa} \right) \right) (0) \right|^2  \\ \leq  \left| \left\langle \gamma_r, \nabla \nabla G(e,\cdot) \right\rangle_{\C_\infty} \right|^2 \size \left(\cu_\Pa (x) \right)^{2d} \left( \X^e(x) \right)^{d}.
\end{equation*}
We apply Proposition~\ref{morecomplicatedgreen} and denote by $w_{\gamma_r} : \C_\infty \to \R^d$ the solution of the equation
\begin{equation*}
- \nabla \cdot  \a \nabla w_{\gamma_r}  = - \nabla \cdot \gamma_r \mbox{ in } \C_\infty,
\end{equation*}
so that, for each edge $e'$ in the infinite cluster,
\begin{equation*}
\nabla  w_{\gamma_r}(e')  = \sum_{e'' \subseteq \C_\infty}  \gamma_r(e'') \nabla \nabla G\left(e'',e'\right) =    \sum_{e'' \subseteq \C_\infty}  \gamma_r(e'') \nabla \nabla G(e',e'')  = \left\langle \gamma_r, \nabla \nabla  G(e', \cdot)  \right\rangle_{\C_\infty}.
\end{equation*}
This implies the identity
\begin{equation*}
\left| w_{\gamma_r} (x) - w_{\gamma_r} (y) \right|=  \left| \left\langle \gamma_r, \nabla \nabla  G( e , \cdot )  \right\rangle_{\C_\infty} \right|,
\end{equation*}
and consequently
\begin{equation*}
\left| \left( \Phi_{r^2} * \left( \nabla \left[ \chi_p \right]_\Pa - \nabla \left[ \chi_p^e \right]_{\Pa} \right) \right) (0)  \right|^2 \leq  \left| w_{\gamma_r}(x) -  w_{\gamma_r}(y) \right|^2  \size \left(\cu_\Pa (x) \right)^{2d} \left( \X^e(x) \right)^{d}.
\end{equation*}

We now combine Cases 1, 2 and 3 to obtain the estimate
\begin{equation*}
\sum_{e \in \mathcal{B}_d} \left| \left( \Phi_{r^2} * \left( \nabla \left[ \chi_p \right]_{\Pa} - \left[ \chi_p^e \right]_{\Pa} \right) \right) (0) \right|^2 \leq C  \sum_{x,y \in \C_\infty, |x-y|_1 = 1}\left| w_{\gamma_r}(x) -  w_{\gamma_r}(y) \right|^2  \size \left(\cu_\Pa (x) \right)^{2d}   \sum_{e \in \mathcal{B}_d^x} \left( \X^e(x) \right)^d,
\end{equation*}
where we used the notation $\mathcal{B}_d^x := \left\lbrace \{ x , y \} \, : \, y \in \Zd, y \sim x \right\rbrace$ to denote the set of bonds connecting the point $x$ to another vertex of $\Zd$.

Using that for each pair of points $x,y \in \C_\infty$ with $|x-y|_1 = 1$, there exists a path which is contained in the infinite cluster $\C_\infty$, the cube $\cu_\Pa (x)$ and its neighbors (the path is simply $(x,y)$ if $\a(\{x,y\}) \neq 0$), we obtain
\begin{multline} \label{resultatbrut}
\sum_{e \in \mathcal{B}_d} \left| \left( \Phi_{r^2} * \left( \nabla \left[ \chi_p \right]_{\Pa} - \nabla \left[ \chi_p^e \right]_{\Pa} \right) \right) (0) \right|^2 \\ \leq C\sum_{z \in \C_\infty}\left|\nabla  w_{\gamma_r}\right|^2(z)  \size \left(\cu_\Pa (z) \right)^{3d} \left( \sum_{x \in \Zd , \, \dist \left(\cu_\Pa(x) , \cu_\Pa (z)\right) \leq 1, \newline  e \in \mathcal{B}_d^x}\left(  \X^e(x) \right)^d \right).
\end{multline} 
To estimate the term on the right-hand side, we first note that, by definition of the function $w_{\gamma_r}$ and the inequality~\eqref{e.Oavgs},
\begin{equation*}
\sum_{z \in \C_\infty}\left|\nabla  w_{\gamma_r} \right|^2(z)  \leq C  \sum_{z \in \C_\infty} \left| \gamma_r(z) \right|^2 
\leq C \sum_{z \in \Zd} C \size(\cu_\Pa(z))^{4d} \zeta_r \left( z\right)^2
 \leq \O_s' \left( C r^{-d}\right).
\end{equation*}
To complete the proof, we use the two lemmas stated below; their proofs are postponed to Appendices~\ref{appC} and~\ref{appB}.

\begin{proposition}[Meyers estimate] \label{meyercorrector}
There exist a constant $C := C(d,\lambda,\p) < \infty$, two exponents $s := s(d, \lambda, \p) > 0$ and $\ep := \ep(d, \lambda, \p) >0$ and a random variable $\M_{\mathrm{Meyers}} \leq \O_s(C)$ such that for each integer $m\in \N $ satisfying $3^m \geq \M_{\mathrm{Meyers}}$, each vector field $\xi : E_d \to \R$ satisfying
\begin{equation*}
\xi (x,y) = 0 \mbox{ if} ~ \a(x,y) = 0 \mbox{ or } x,y \notin \C_\infty,
\end{equation*}
and each function $v : \C_\infty \mapsto \R$ solution of the equation
\begin{equation*}
- \nabla \cdot \left( \a \nabla v \right) = - \nabla \cdot \xi \mbox{ in } \C_\infty,
\end{equation*}
on has the estimate
\begin{multline} \label{eqmeyercorrector}
\left( \frac{1}{|\cu_m|} \sum_{x \in \cu_m \cap \C_\infty} \left| \nabla v \right|^{2+\ep}(x)  \right)^{\frac1{2+\ep}} \\
\leq C \left( \frac1{\left| \frac 43 \cu_m \right|}\sum_{x \in \frac 43 \cu_m\cap \C_\infty} \left|\nabla v \right|^{2}(x) \right)^{\frac1{2}} + C  \left( \frac1{\left| \frac 43 \cu_m \right|} \sum_{x \in \frac 43\cu_m \cap \C_\infty} \left| \xi \right|^{2+\ep}(x) \right)^{\frac1{2+\ep}}.
\end{multline}
\end{proposition}

\begin{lemma}[Minimal scale] \label{minscalecorrector}
\begin{sloppypar}There exist a constant $C := C(d, \p ,\lambda) < \infty$, an exponent $s := s(d, \p ,\lambda) > 0$ and a random variable $\M_1 \leq \O_s' (C)$ such that for each integer $m \in \N$ satisfying $3^m \geq \M_1$,\end{sloppypar}
\begin{equation} \label{eqminscalecorrector}
3^{-dm} \sum_{z \in \cu_m}\size \left(\cu_\Pa (z) \right)^{\frac{3d(2+\ep)}{\ep}}  \left( \sum_{x \in \Zd, \, \dist \left(\cu_\Pa(x) , \cu_\Pa (z)\right) \leq 1, \, e \in \mathcal{B}_d^x}\left(\X^e(x) \right)^d \right)^{\frac{2+\ep}{\ep}} \leq C
\end{equation}
where $\ep := \ep(d, \p , \lambda) > 0$ is the exponent which appears in Proposition~\ref{meyercorrector}.
\end{lemma}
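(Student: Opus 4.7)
The plan is to first establish a pointwise stretched-exponential bound on the integrand, then upgrade it to the claimed minimal-scale statement for the spatial average by the same mechanism that was used to prove Proposition~\ref{p.partitions}(iv) in~\cite{AD2}. Denote
\begin{equation*}
Y(z) := \size \left(\cu_\Pa (z) \right)^{\frac{3d(2+\ep)}{\ep}}  \left( \sum_{x \in \Zd, \, \dist \left(\cu_\Pa(x) , \cu_\Pa (z)\right) \leq 1, \, e \in \mathcal{B}_d^x}\left( 1 + \X^e(x) \right)^d \right)^{\frac{2+\ep}{\ep}}.
\end{equation*}
First I would show that $Y(z) \leq \O_s'(C)$ for some $s := s(d,\p,\lambda) > 0$ uniformly in $z \in \Zd$. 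By~\eqref{e.partitionO1} and the comparability of neighboring cubes of $\Pa$ given in Proposition~\ref{p.partitions}(ii), every $x$ with $\dist(\cu_\Pa(x),\cu_\Pa(z)) \leq 1$ lies in a ball of radius $C\size(\cu_\Pa(z))$, so the inner sum runs over at most $C\size(\cu_\Pa(z))^d$ vertices, each contributing $2d$ edges. Each term $(1+\X^e(x))^d$ is $\O_{s'}'(C)$ by Proposition~\ref{p.regularitytheory} applied in the extended probability space (the law of $\a^e$ under $\P'$ is the same as that of $\a$ under $\P$). Summing via~\eqref{e.Oavgs} first over $x$ and $e$ at fixed $\size(\cu_\Pa(z))$, one obtains $W(z) \leq \O_{s''}'\bigl(C\size(\cu_\Pa(z))^d\bigr)$; combining this via~\eqref{e.Oprods} with the powers of $\size(\cu_\Pa(z)) \leq \O_1(C)$ furnished by~\eqref{e.partitionO1} yields $Y(z) \leq \O_s'(C)$ after decreasing the exponent.

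Next I would turn this pointwise bound into the desired minimal-scale statement. Both $\Pa$ and the law of $\X$ are $\Zd$-stationary, hence so is $Y$, and $C_0 := \E'[Y(0)]$ is a finite constant independent of the base point. The key structural observation is that $Y(z)$ is measurable with respect to the environment in a ball $B(z, R(z))$ of random radius $R(z) \leq \O_s'(C)$, so $Y$ has a stretched-exponential range of dependence. Partitioning $\cu_m$ into sub-cubes of a well-chosen intermediate size $3^k$ with $k \ll m$, the sub-cube averages are nearly independent on an event of overwhelming probability, and a block Bernstein-type concentration argument yields
\begin{equation*}
\P'\!\left[\, 3^{-dm}\sum_{z \in \cu_m} Y(z) > 2C_0\,\right] \leq C\exp\bigl(-C^{-1} 3^{m \alpha}\bigr)
\end{equation*}
for some $\alpha := \alpha(d,\p,\lambda) > 0$. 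Defining $\M_1 := \inf\bigl\{3^m : 3^{-dm'}\sum_{z \in \cu_{m'}}Y(z) \leq 2C_0 \textnormal{ for every } m'\geq m\bigr\}$ and summing the above tail probabilities along the scales $m' \geq m$ gives $\P'[\M_1 > 3^m] \leq C\exp(-C^{-1}3^{m\alpha})$, which translates into $\M_1 \leq \O_s'(C)$ (possibly for a smaller $s$) and establishes~\eqref{eqminscalecorrector} with constant $2C_0$.

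The main technical obstacle is the bookkeeping of exponents when combining the various $\O_s'$ bounds: the inner sum defining $W(z)$ has a random number of summands driven by $\size(\cu_\Pa(z))$, which is correlated with the $\X^e(x)$ appearing inside through the shared environment. The resolution is to first condition on the realization of $\Pa$ (which fixes the range of the summation) before applying~\eqref{e.Oavgs} to the $(1+\X^e(x))^d$ terms, and only afterwards combine with the power of $\size(\cu_\Pa(z))$ via~\eqref{e.Oprods}. Each such combination degrades the stretched-exponential exponent by a fixed factor, but only finitely many steps are required, so the final $s$ remains strictly positive, which suffices for the minimal-scale statement.
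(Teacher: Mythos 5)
Your Step~1 (the pointwise bound $Y(z) \le \O_s'(C)$) is sound in spirit, and the overall block-concentration-plus-union-bound template is the right genre of argument. But there is a genuine gap in Step~2, and it is exactly the obstruction that the paper's proof in Appendix~\ref{appB} is designed to handle.

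You assert that $Y(z)$ is measurable with respect to the environment in a ball $B(z, R(z))$ of random radius $R(z) \le \O_s'(C)$. This is false. The random variable $\X^e(x)$ is the minimal scale from Proposition~\ref{p.regularitytheory}: it is an infimum over all radii $r$ such that the Lipschitz-type estimate holds for \emph{every} pair $r' \le R'$ with $r' \ge r$, and the outer radius $R'$ ranges over all of $[r,\infty)$. Consequently $\X^e(x)$ depends on the environment at arbitrarily large distances from $x$ and is not $\F(B_R)$-measurable for any finite $R$, random or otherwise. With no finite range of dependence, the sub-cube averages in your block decomposition are not nearly independent, and the Bernstein-type tail bound you invoke is unavailable.

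The paper works around this in a way you did not anticipate and which slightly changes the statement being proved. After a Jensen/Cauchy--Schwarz reduction (which brings the power $\frac{2+\ep}{\ep}$ inside the inner sum, separates the $\size(\cu_\Pa)$-powers from the $\X^e$-powers, and disposes of the $\size$-average using Proposition~\ref{p.partitions}(iv)), the task becomes a minimal-scale bound for spatial averages of $(\X^e(x))^t$. To make this quantity tractable, the paper introduces a \emph{truncated, local} minimal scale $\X^e_{R,C}(x)$ in~\eqref{definitionlocalisedminscale.def}, defined by requiring the regularity estimate only on scales $r' \le R' \le R$ inside $\C_{\mathrm{max}}(B_R(x))$; this one \emph{is} $\F(x+B_R)\otimes\{\emptyset,\Omega\}$-measurable. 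It then proves the deterministic comparison inequality~\eqref{estimatelocalXX},
\begin{equation*}
\X_{C^2} \le \X_{R,C} + R\,\indc_{\{R \le \M_t(\Pa)\}} + \X_{C}\,\indc_{\{\X_{C} > R\}},
\end{equation*}
which bounds the minimal scale at a \emph{degraded constant} $C^2$ by the local variable plus two error terms with superpolynomially small moments. The error terms are absorbed with the pointwise $\O_s'$ bounds and union bounds via the scale choice $n \simeq \tfrac{dm}{d+t+1}$, the local variable is handled by the block-independence and exponential-moment computation of Step~1 of the proof of Lemma~\ref{minscaleforminscale}, and Lemma~\ref{lemmafromperc.0} turns the result into a genuine minimal scale. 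The upshot is that the paper proves a \emph{strictly weaker} statement than the literal Lemma~\ref{minscalecorrector}, with $\X = \X_{C_0}$ replaced by the pointwise smaller $\X_{C_0^2}$, and then observes in a remark that this suffices because the estimates of Section~\ref{section3} can be rerun with $C_0^2$ in place of $C_0$. Your proposal does not anticipate either the localization device or the need to weaken the constant, and without them the block-concentration step does not go through.
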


\begin{definition}[The partition $\mathcal{U}$] \label{def.partitionU}
We define the following family of ``good cubes"
\begin{equation*}
\mathcal{G}:= \left\{\cu\in\T\,:\, \mbox{\eqref{eqmeyercorrector} and \eqref{eqminscalecorrector} hold}\right\}
\end{equation*}
in which a deterministic Meyers estimate and a minimal scale inequality hold. By Lemma~\ref{minscalecorrector} and Proposition~\ref{meyercorrector}, this collection satisfies the assumption of Proposition~\ref{p.partitions} (but not the assumption~\eqref{e.goodlocal}). We denote by~$\mathcal{U}$  the partition thus obtained. By the Property~(iii) of Proposition~\ref{p.partitions}, one has the inequality
\begin{equation*}
\size\left( \cu_{\mathcal{U}}(x) \right) \leq \O_s (C),
\end{equation*}
for some exponent $s := s(d , \p, \lambda) > 0$ and some constant $C := C(d , \p ,\lambda) < \infty$.
\end{definition}
Using the properties of the partition $\mathcal{U}$ and H\"{o}lder inequality, one obtains
\begin{align*}
\lefteqn{ \sum_{z \in  \C_\infty}\left|\nabla w_{\gamma_r} \right|^2(z)  \size \left(\cu_\Pa (z) \right)^{3d} \left( \sum_{x \in \Zd, \, \dist \left(\cu_\Pa(x) , \cu_\Pa (z)\right) \leq 1, \, e \in \mathcal{B}_d^x}\left( 1 + \X^e(x) \right)^d \right) } \qquad & \\ &
 = \sum_{\cu \in \mathcal{U}} \sum_{z \in \cu \cap \C_\infty}\left|\nabla w_{\gamma_r} \right|^2(z)  \\ & \qquad \times \size \left(\cu_\Pa (z) \right)^{3d} 
 \left( \sum_{x \in \Zd, \, \dist \left(\cu_\Pa(x) , \cu_\Pa (z)\right) \leq 1, \, e \in \mathcal{B}_d^x}\left( 1 + \X^e(x) \right)^d \right) \\ &
\leq  \sum_{\cu \in \mathcal{U}} |\cu| \left( \frac{1}{|\cu|} \sum_{z \in \cu \cap \C_\infty}\left|\nabla w_{\gamma_r}\right|^{2+\ep}(z) \right)^\frac{2}{2+\ep} \\ &
\qquad \times  \left( \frac{1}{|\cu|} \sum_{z \in \cu} \left( \size \left(\cu_\Pa (z) \right) \right)^{\frac{3d(2+\ep)}{\ep}}   \left( \sum_{x \in \Zd, \, \dist \left(\cu_\Pa(x) , \cu_\Pa (z)\right) \leq 1, \, e \in \mathcal{B}_d^x}\left( 1 + \X^e(x) \right)^d \right)^{\frac{2+\ep}{\ep}}\right)^\frac{\ep}{2+\ep} \\&
\leq C  \sum_{\cu \in \mathcal{U}} \left(  \sum_{x \in \frac 43 \cu \cap \C_\infty} \left|\nabla w_{\gamma_r} \right|^{2}(x) + |\cu| \left( \frac1{\left| \frac 43 \cu \right|} \sum_{\frac 43\cu \cap \C_\infty} \left|\gamma_r\right|^{2+\ep}(x)\right)^{\frac2{2+\ep}} \right).
\end{align*}
To estimate the term on the right-hand side, we note that the cube $\frac 43 \cu$ is included in the set $\bigcup_{\cu' \in \mathcal{U},\, \dist(\cu', \cu) \leq 1} \cu'$ and the cardinality of the set $ \left\lbrace \cu' \in \mathcal{U}\, : \, \dist(\cu', \cu) \leq 1 \right\rbrace$ is bounded by a constant depending only on the dimension $d$. This leads to the upper bound
\begin{equation*}
\sum_{\cu \in \mathcal{U}}  \sum_{x \in \frac 43 \cu \cap \C_\infty} \left|\nabla w_{\gamma_r} \right|^{2}(x) \leq C \sum_{ x \in \C_\infty} \left|\nabla w_{\gamma_r} \right|^{2}(x)  \leq \O_s'\left(\frac{C}{r^d}\right).
\end{equation*}
To estimate the second term on the right-hand side, we recall the discrete $l^1-l^t$-estimate: for any finite sequence of nonnegative real numbers $(b_i)_{0\leq i \leq n} \in \R_+^{n+1}$ and any exponent $t \geq 1$, $\sum_{i = 0}^n b_i^t \leq \left( \sum_{i = 0}^n b_i \right)^t$. Using this inequality, we obtain
\begin{align*}
\sum_{\cu \in \mathcal{U}}   |\cu| \left( \frac1{\left| \frac 43 \cu \right|} \sum_{x \in \frac 43\cu \cap \C_\infty} \left|\gamma_r\right|^{2+\ep}(x) \right)^{\frac2{2+\ep}}  & \leq C \sum_{\cu \in \mathcal{U}} |\cu|^{1 - \frac 2{2+\ep}} \sum_{x \in \frac 43\cu \cap \C_\infty} \gamma_r^{2}(x) \\
&  \leq C \sum_{x \in \C_{\infty}} \left|\gamma_r\right|^2(x) \size \left(\cu_\mathcal{U}(x) \right)^{d \left( 1 - \frac 2{2+\ep} \right)} \\
& \leq C \sum_{x \in \Zd} \zeta_{r}(x)^2 \size \left(\cu_\Pa(x) \right)^{4d} \size \left(\cu_\mathcal{U}(x) \right)^{d \left( 1 - \frac 2{2+\ep} \right)}.
\end{align*}
Using the inequalities $ \size \left(\cu_\mathcal{U}(x) \right) \leq \O_s'(C)$, $\size \left(\cu_\Pa(x) \right) \leq \O_s'(C)$ and~\eqref{e.Oavgs}, we obtain
\begin{equation*}
\sum_{\cu \in \mathcal{U}}   |\cu| \left( \frac1{\left| \frac 43 \cu \right|} \sum_{x \in \frac 43\cu \cap \C_\infty} \left|\gamma_r\right|^{2+\ep}(x) \right)^{\frac2{2+\ep}} \leq \O_s' \left(\frac C{r^d} \right).
\end{equation*}
The proof of Result 2, and thus of Proposition~\ref{boundspatialaveragecorrector}, is complete.

\section{Optimal $L^q$ estimates for first order corrector}\label{section4}

In this section, we show how to obtain $L^q$ optimal scaling estimates on the corrector (Theorem~\ref{mainthm}) from Proposition~\ref{boundspatialaveragecorrector}. We first restate the result.

\begin{reptheorem}{mainthm}[Optimal $L^q$ estimates for first order corrector]
There exist two exponents $s := s(d, \p , \lambda) > 0$, $k := k(d, \p , \lambda) < \infty$ and a constant $C(d, \p,  \lambda) < \infty$ such that for each radius $R \geq 1$, each exponent $q \geq 1$ and each $p \in \Rd$,
\begin{equation} \label{e.mainthmbis52}
\left( R^{-d} \sum_{x \in \C_\infty \cap B_R} \left| \chi_p(x) - \left( \chi_p \right)_{\C_\infty \cap B_R} \right|^q  \right)^\frac 1q \leq  \left\{
  \begin{array}{lcl}
    \O_s \left( C |p| q^k \log^\frac12 R \right) & \mbox{if } d=2, \\
     \O_s \left( C|p| q^k \right) & \mbox{if } d\geq 3. \\
  \end{array}
\right.
\end{equation}
\end{reptheorem}

Before starting the proof, we mention that, in this section, we need to keep track of the dependence on the parameter $q$ of the constants as it will be useful in the next section to obtain the $L^\infty$ bounds on the corrector.

\begin{proof}[Proof of Theorem~\ref{mainthm}.]
As in the proof of Proposition~\ref{boundspatialaveragecorrector}, we assume that $|p| = 1$ to ease the notations. Additionally, note that, by the Jensen inequality, it is enough to prove Theorem~\ref{mainthm} in the case $q \geq 2$. We consequently make this assumption for the rest of the proof. The proof of this theorem is split into two steps.
\begin{itemize}
\item In Step 1, we use Proposition~\ref{boundspatialaveragecorrector} and the multiscale Poincar\'e inequality (Proposition~\ref{multiscalepoincare}) to show, for each radius $R \geq 1$,
\begin{equation*}
\left( R^{-d} \sum_{x \in B_R} \left| \left[ \chi_p \right]_{\Pa}(x) - \left( \left[ \chi_p\right]_{\Pa} \right)_{B_R} \right|^q  \right)^\frac 1q \leq  \left\{
  \begin{array}{lcl}
    \O_s \left( C q^k \log^\frac12 R \right) & \mbox{if } d=2, \\
     \O_s \left( C q^k\right) & \mbox{if } d\geq 3, \\
  \end{array}
\right.
\end{equation*}
where the parameters $C , k$ and $s$ depend only on $s , \p, \lambda.$
\item In Step 2, we remove the coarsening, thanks to Proposition~\ref{l.coarseLs}, to obtain
\begin{equation*}
\left( R^{-d} \sum_{x \in \C_\infty \cap B_R} \left| \chi_p(x) - \left( \chi_p \right)_{\C_\infty \cap B_R} \right|^q  \right)^\frac 1q \leq  \left\{
  \begin{array}{lcl}
    \O_s \left( C q^k \log^\frac12 R \right) & \mbox{if } d=2, \\
     \O_s \left( C q^k \right) & \mbox{if } d\geq 3. \\
  \end{array}
\right.
\end{equation*}
\end{itemize}

\emph{Step 1.} We apply Proposition~\ref{multiscalepoincare} to the function $u = \left[ \chi_p \right]_{\Pa}$ and obtain, for each $R \geq 1$,
\begin{equation*}
\left\| \left[ \chi_p \right]_{\Pa} - \left(\left[ \chi_p \right]_{\Pa}\right)_{B_R}  \right\|_{L^q\left(B_R\right)} \leq C  \left( \sum_{x \in \Zd} e^{ - \frac{|x|}{2R} } \left( \int_{0}^{2R} r \left| \Phi_{r^2} * \nabla \left[ \chi_p \right]_{\Pa} (x) \right|^2 \, dr  \right)^{\frac q2} \right)^\frac1q.
\end{equation*}
To study the term on the right-hand side, we split the integral into two terms
\begin{equation} \label{splittingtheeqaution}
\int_{0}^{2R} r \left|  \Phi_{r^2} * \nabla \left[ \chi_p \right]_{\Pa} (x) \right|^2 \, dr = \int_{0}^{1} r \left| \Phi_{r^2} * \nabla \left[ \chi_p \right]_{\Pa} (x) \right|^2 \, dr + \int_{1}^{2R} r \left| \Phi_{r^2} * \nabla \left[ \chi_p \right]_{\Pa} (x) \right|^2 \, dr.
\end{equation}
By Proposition~\ref{boundspatialaveragecorrector}, we know that for each radius $r \geq 1$ and each point $x \in \Rd$,
\begin{equation*}
\left| \Phi_{r^2} * \nabla \left[ \chi_p \right]_{\Pa} (x) \right| \leq \O_s\left( C r^{- \frac d2} \right).
\end{equation*}
This implies
\begin{equation*}
\left| \Phi_{r^2} * \nabla \left[ \chi_p \right]_{\Pa} (x) \right|^2 \leq \O_{ s}\left( C r^{- d} \right).
\end{equation*}
The second term on the right-hand side can be estimated by using Proposition~\ref{boundspatialaveragecorrector} and the inequality~\eqref{e.Oavgs}. We obtain
\begin{equation*}
\int_{1}^{2R} r \left| \Phi_{r^2} * \nabla \left[ \chi_p \right]_{\Pa} (x) \right|^2 \, dr \leq  \left\{
  \begin{array}{lcl}
    \O_s \left( C \log R \right) & \mbox{if } d=2, \\
     \O_s \left( C \right) & \mbox{if } d\geq 3. \\
  \end{array}
\right.
\end{equation*}
To estimate the first term on the right-hand side of~\eqref{splittingtheeqaution}, we use Proposition~\ref{p.lipboundcorr} which reads, for each point $x \in \Rd$,
\begin{equation} \label{eq:TV16351602}
 \left| \nabla \left[ \chi_p \right]_{\Pa} (x) \right| \leq \O_s \left( C \right).
\end{equation}
Combining the inequality~\eqref{eq:TV16351602} with~\eqref{e.Oavgs}, we obtain
\begin{equation*}
\int_{0}^{1} r \left| \Phi_{r^2} * \nabla \left[ \chi_p \right]_{\Pa} (x) \right|^2 \, dr \leq \O_s \left( C \right).
\end{equation*}
Combining the previous displays shows
\begin{equation*}
\int_{0}^{2R} r \left| \Phi_{r^2} * \nabla \left[ \chi_p \right]_{\Pa} (x) \right|^2 \, dr  \leq \left\{  \begin{array}{lcl}
    \O_s \left( C \log R \right) & \mbox{if } d=2, \\
     \O_s \left( C \right) & \mbox{if } d\geq 3. \\
  \end{array}
\right.
\end{equation*}
We obtain
\begin{equation*}
\left( \int_{0}^{2R} r \left| \Phi_{r^2} * \nabla \left[ \chi_p \right]_{\Pa} (x) \right|^2 \, dr \right)^\frac q2  \leq \left\{  \begin{array}{lcl}
    \O_{\frac{2s}{q}} \left( C^{\frac q2} \left( \log R \right)^\frac q2 \right) & \mbox{if } d=2, \\
     \O_{\frac{2s}{q}}  \left( C^{\frac q2} \right) & \mbox{if } d\geq 3. \\
  \end{array}
\right.
\end{equation*}
We apply~\eqref{e.Oavgs} and keep track of the dependence of the constants in the exponent $q$ thanks to~\eqref{est.cte.Oavgs}. We obtain
\begin{equation*}
\sum_{x \in \Zd} R^{-d}e^{ - \frac{|x|}{2R} } \left( \int_{0}^{2R} r \left| \Phi_{r^2} * \nabla \left[ \chi_p \right]_{\Pa} (x) \right|^2 \, dr  \right)^{\frac q2}  \leq \left\{  \begin{array}{lcl}
    \O_{\frac{2s}{q}} \left( \left( \frac{q}{s \ln(2)} \right)^\frac qs C^{\frac q2}  \left( \log R \right)^\frac q2 \right) & \mbox{if } d=2, \\
     \O_{\frac{2s}{q}}  \left( \left( \frac{q}{s \ln(2)} \right)^\frac qs C^{\frac q2}  \right) & \mbox{if } d\geq 3. \\
  \end{array}
\right.
\end{equation*}
This eventually yields
\begin{equation*}
\left( \sum_{x \in \Zd} R^{-d}e^{ - \frac{|x|}{2R} } \left( \int_{0}^{2R} r \left| \Phi_{r^2} * \nabla \left[ \chi_p \right]_{\Pa} (x) \right|^2 \, dr  \right)^{\frac q2} \right)^\frac 1q  \leq \left\{  \begin{array}{lcl}
    \O_s \left( q^\frac 1s C  \left( \log R \right)^\frac 12 \right) & \mbox{if } d=2, \\
     \O_s  \left( q^\frac 1s C  \right) & \mbox{if } d\geq 3. \\
  \end{array}
\right.
\end{equation*}
We now set $k := \frac 1s + \frac 32$; this exponent depends only on the parameters $d , \p , \lambda$. By applying Proposition~\ref{multiscalepoincare}, we obtain
\begin{equation} \label{quotetheresultwithcoarsenedfunctions}
\left( R^{-d} \sum_{x \in \Zd} \left| \left[ \chi_p \right]_{\Pa}(x) - \left( \left[ \chi_p\right]_{\Pa} \right)_{B_R} \right|^q  \right)^\frac 1q \leq  \left\{
  \begin{array}{lcl}
    \O_s \left( C q^k \log^\frac12 R \right) & \mbox{if } d=2, \\
     \O_s \left( C q^k \right) & \mbox{if } d\geq 3. \\
  \end{array}
\right.
\end{equation}
The proof of Step 1 is complete.

\smallskip

\emph{Step 2.} In this step, we remove the coarsening from~\eqref{quotetheresultwithcoarsenedfunctions} thanks to Proposition~\ref{l.coarseLs}. We write
\begin{align} \label{splitcorrLq}
\left(3^{-dm}\sum_{x \in \C_\infty \cap  \cu_m} \left| \chi_p(x) - \left( \chi_p \right)_{\C_\infty \cap  \cu_m} \right|^q \right)^\frac 1q 
& \leq 2 \inf_{a \in \R}  \left(  3^{-dm} \sum_{x \in \C_\infty \cap  \cu_m} \left| \chi_p(x) - a \right|^q \right)^\frac 1q  \notag \\
&\leq 2 \left(3^{-dm}\sum_{x \in \C_\infty \cap  \cu_m} \left| \chi_p(x) - \left( \left[ \chi_p \right]_\Pa \right)_{\cu_m} \right|^q \right)^\frac 1q \\
&  \leq 2 \left( 3^{-dm}\sum_{x \in \C_\infty \cap  \cu_m} \left| \chi_p(x) - \left[ \chi_p \right]_{\Pa}(x)  \right|^q \right)^\frac 1q  \notag \\
		& \qquad + 2 \left(  3^{-dm}\sum_{x \in \C_\infty \cap  \cu_m} \left| \left[ \chi_p \right]_{\Pa}(x) - \left( \left[ \chi_p \right]_{\Pa} \right)_{\cu_m} \right|^q  \right)^\frac 1q. \notag
\end{align}
To estimate the first term on the right-hand side, we first use the inclusion~\eqref{cubeandneighbors} and Proposition~\ref{l.coarseLs} to obtain
\begin{align*}
\sum_{x \in \C_\infty \cap  \cu_m} \left| \chi_p(x) - \left[ \chi_p \right]_{\Pa}(x) \right|^q & \leq \sum_{x\in \C_*\left( \cu_{m+1} \right)} \left| \chi_p(x) - \left[ \chi_p \right]_{\Pa}(x) \right|^q \\ 
& \leq  C \sum_{x \in \C_*\left(\cu_{m+1}\right)}  \size(\cu_\Pa(x))^{qd} \left| \nabla \chi_p \right|^q(x)   \\
& \leq C \sum_{x \in \C_*\left(\cu_{m+1}\right)}  \size(\cu_\Pa(x))^{qd} \left| \nabla \chi_p \right|^q(x).
\end{align*}
By the Lipschitz bounds on the gradient of the corrector and the property of the partition $\Pa$, we have, for each point $x \in \Zd$,
\begin{equation*}
\size(\cu_\Pa(x))^{qd} \left| \nabla \chi_p \right|^q(x) \indc_{\{ x \in \C_\infty \}} \leq \O_{\frac sq} \left(C^q\right).
\end{equation*}
Additionally, for each point $x \in \Zd \setminus \cu_{m+1}$,
\begin{equation*}
    \indc_{\{ x \in \C_*\left( \cu_{m+1} \right)\} } \leq \frac{\size(\cu_{\Pa}(x))^{d+1}}{|x|^{d+1} \vee 1} \leq \O_s\left(\frac{C}{\left(|x| \vee 1\right)^{d+1}}\right).
\end{equation*}
We then use the inequalities~\eqref{e.Oavgs} and~\eqref{est.cte.Oavgs} to obtain
\begin{align*}
\sum_{ x \in \C_*\left( \cu_{m+1} \right)}  \size(\cu_\Pa(x))^{qd} \left| \nabla \chi_p \right|^q(x) & = \sum_{x \in \Zd}  \size(\cu_\Pa(x))^{qd} \left| \nabla \chi_p \right|^q(x) \indc_{\{x \in \C_*\left( \cu_{m+1} \right) \}}\\
																			& \leq \O_{\frac sq} \left( 3^{dm}  \left( \frac{q}{s \ln(2)} \right)^\frac qs  C^q  \right) \\
																			& \leq \O_{\frac sq} \left( 3^{dm}  q^\frac qs  C^q  \right).
\end{align*}
By~\eqref{e.sizecluster}, we obtain
\begin{equation} \label{eq:TV09211702}
  \left( 3^{-dm}\sum_{x \in \C_\infty \cap  \cu_m} \left| \chi_p(x) - \left[ \chi_p \right]_{\Pa}(x) \right|^q \right)^\frac 1q \leq \O_{s} \left( q^\frac 1s  C  \right).
\end{equation}
To estimate the second term on the right-hand side of~\eqref{splitcorrLq}, we note that
\begin{equation} \label{eq:TV09221702}
  \left(  3^{-dm} \sum_{x \in \C_\infty \cap  \cu_m} \left| \left[ \chi_p \right]_{\Pa}(x) - \left( \left[ \chi_p \right]_{\Pa} \right)_{\cu_m} \right|^q  \right)^\frac 1q \leq  \left(  3^{-dm} \sum_{x \in \cu_m} \left| \left[ \chi_p \right]_{\Pa}(x)  -  \left( \left[ \chi_p \right]_{\Pa} \right)_{ \cu_m} \right|^q \right)^\frac 1q.
\end{equation}
We combine~\eqref{eq:TV09211702},~\eqref{eq:TV09221702} and apply~\eqref{quotetheresultwithcoarsenedfunctions} to obtain
\begin{equation*}
 \left(  3^{-dm} \sum_{x \in \C_\infty \cap  \cu_m}  \left| \chi_p(x) - \left( \chi_p \right)_{\C_\infty \cap  \cu_m} \right|^q \right)^\frac 1q  \leq \left\{  \begin{array}{lcl}
    \O_{s} \left(q^k C  m^\frac 12 \right) & \mbox{if } d=2, \\
     \O_{s}  \left(q^k C  \right) & \mbox{if } d\geq 3, \\
  \end{array}
\right.
\end{equation*}
for some exponents $k := k(d , \p , \lambda),$ $s:= s (d , \p , \lambda) >0 $ and some constant $C:= C(d , \p , \lambda) < \infty$. The result of Theorem~\ref{mainthm} requires to prove the previous inequality for a general ball $B_R$ and not a triadic cube. This result is obtained by selecting, for each radius $R \geq 1$, the integer $m$ such that $3^{m} < R \leq 3^{m+1}$ and by performing a similar analysis.
\end{proof}

\section{Optimal $L^\infty$ estimates for the first order corrector} \label{section5}
In this section, we prove the $L^\infty$ bound on the corrector, Theorem~\ref{realmainthm}.

\begin{reptheorem}{realmainthm}[Optimal $L^\infty$ estimates for first order correctors]
There exist an exponent $s := s(d , \p , \lambda) > 0$ and a constant $C := C(d , \p , \lambda) < \infty$ such that for each $x,y \in \Zd$ and each $p \in \Rd$,
\begin{equation*}
\left| \chi_p(x) - \chi_p(y) \right| \indc_{\{ x,y \in \C_\infty\}}\leq \left\{  \begin{array}{lcl}
    \O_{s} \left(C  |p| \log^\frac 12  |x-y|  \right) & \mbox{if } d=2, \\
     \O_{s}  \left(C|p|  \right) & \mbox{if } d\geq 3. \\
  \end{array}
\right.
\end{equation*}
\end{reptheorem}

\begin{proof}
First by the stationarity of the gradient of the corrector, we can assume that $y = 0$. Without loss of generality, we can also assume $|p| = 1$. We want to prove, for each point $x \in \Zd$,
\begin{equation*}
\left| \chi_p(x) - \chi_p(0) \right| \indc_{\{ 0,x \in \C_\infty\}}\leq \left\{  \begin{array}{lcl}
    \O_{s} \left(C  \log^\frac 12 |x| \right) & \mbox{if } d=2, \\
     \O_{s}  \left(C  \right) & \mbox{if } d\geq 3. \\
  \end{array}
\right.
\end{equation*}
We record that, for every exponent $q >0$ and every point $x \in \R_+$
\begin{equation*}
\exp(x) \geq \frac{x^q}{q^q \exp(-q)}.
\end{equation*}
This implies, for each triplet $s,q,\theta > 0$, 
\begin{equation} \label{est.exp.pol}
X \leq \O_s (\theta) \implies \E[ X^q ] \leq 2 \theta^q \left( \frac{q}{s} \right)^{\frac{q}{s} }  \exp \left( \frac{q}{s} \right).
\end{equation}
We split the proof into six steps.
\begin{itemize}
\item In Step 1, we prove that for each exponent $q \geq 1$ and each integer $m \in \N$,
\begin{equation*}
\E \left[ \left| \left[ \chi_p \right]_\Pa (0) - 3^{-2dm}\sum_{y \in \cu_m} \sum_{z \in y + \cu_m} \left[ \chi_p \right]_\Pa (z) \right|^q \right] \leq  \left\{ \begin{array}{lcl}
    C^q q^{qk} m^{\frac q2} & \mbox{if } d=2, \\
     C^q q^{qk} & \mbox{if } d\geq 3. \\
  \end{array}
\right.
\end{equation*}
\item In Step 2, we use the result of Step 1 to prove that for each exponent $q \geq 1$ and each integer $m \in \N$,
\begin{equation*}
\E \left[ \left| \left[ \chi_p \right]_\Pa (x) - 3^{-2dm} \sum_{y \in \cu_m} \sum_{z \in x + y+ \cu_m} \left[ \chi_p \right]_\Pa (z) \right|^q \right] \leq  \left\{ \begin{array}{lcl}
    C^q q^{qk} m^{\frac q2} & \mbox{if } d=2, \\
     C^q q^{qk} & \mbox{if } d\geq 3. \\
  \end{array}
\right.
\end{equation*}
Note that this statement is not just a consequence of Step 1 and the stationarity of the corrector since the partition $\Pa$ is not stationary.
\item In Step 3, we prove that for each $q \geq 1$ and $m \in \N$, chosen such that $3^m \leq |x| < 3^{m+1}$,
\begin{equation*}
\E \left[ \left| \left[ \chi_p \right]_\Pa (0) - 3^{-2dm} \sum_{y \in \cu_m} \sum_{z \in x + y+ \cu_m} \left[ \chi_p \right]_\Pa (z) \right|^q \right] \leq  \left\{ \begin{array}{lcl}
    C^q q^{qk} m^{\frac q2} & \mbox{if } d=2, \\
     C^q q^{qk} & \mbox{if } d\geq 3. \\
  \end{array}
\right.
\end{equation*}
\item In Step 4, we combine Steps 2 and 3 to obtain, for each exponent $q \geq 1$,
\begin{equation} \label{est.step4.linfty}
\E \left[ \left| \left[ \chi_p \right]_\Pa (x) -\left[ \chi_p \right]_\Pa (0) \right|^q \right] \leq \left\{ \begin{array}{lcl}
    C^q q^{qk} m^{\frac q2} & \mbox{if } d=2, \\
     C^q q^{qk} & \mbox{if } d\geq 3. \\
  \end{array}
\right. 
\end{equation}
\item In Step 5, we prove that there exist an exponent $s := s(d , \p, \lambda) > 0$ and a constant $C := C(d , \p, \lambda ) < \infty$ such that
\begin{equation} \label{est.step5.linfty}
\left| \left[ \chi_p \right]_\Pa (x) -\left[ \chi_p \right]_\Pa (0) \right| \leq \left\{  \begin{array}{lcl}
    \O_{s} \left(C  \log^\frac 12 |x| \right) & \mbox{if } d=2, \\
     \O_{s}  \left(C  \right) & \mbox{if } d\geq 3. \\
  \end{array}
\right.
\end{equation}
\item In Step 6, we remove the coarsening and show that
\begin{equation*}
\left| \chi_p(x) - \chi_p(0) \right| \indc_{\{ 0,x \in \C_\infty\}}\leq \left\{  \begin{array}{lcl}
    \O_{s} \left(C  \log^\frac 12 |x| \right) & \mbox{if } d=2, \\
     \O_{s}  \left(C  \right) & \mbox{if } d\geq 3. \\
  \end{array}
\right.
\end{equation*}
\end{itemize}

\medskip

\textit{Step 1.}  The main tool of this step is the following inequality which was proved in Step 1 of the proof of Theorem~\ref{mainthm}, for each $m \in \N$, and each $q \geq 1$,
\begin{equation} \label{maintoolstep1Linfty}
\left( 3^{-dm} \sum_{y \in  \cu_m} \left| \left[ \chi_p \right]_{\Pa}(y) - \left( \left[ \chi_p\right]_{\Pa} \right)_{\cu_m} \right|^q  \right)^\frac 1q \leq  \left\{
  \begin{array}{lcl}
    \O_s \left( C q^k \sqrt{m} \right) & \mbox{if } d=2, \\
     \O_s \left( C q^k\right) & \mbox{if } d\geq 3. \\
  \end{array}
\right.
\end{equation}
Note that this implies, by~\eqref{est.exp.pol},
\begin{equation} \label{step1LinftyLp}
\E \left[3^{-dm} \sum_{y \in \cu_m} \left| \left[ \chi_p \right]_{\Pa}(y) - \left( \left[ \chi_p\right]_{\Pa} \right)_{\cu_m} \right|^q   \right] \leq
\left\{
  \begin{array}{lcl}
    C^q q^{qk} m^{\frac q2} & \mbox{if } d=2, \\
     C^q q^{qk} & \mbox{if } d\geq 3. \\
  \end{array}
\right.
\end{equation}
For some fixed point $y \in \Zd$, the stationarity of the corrector stated in~\eqref{e.stat.corr} implies the identity, for almost every environment $\a \in \Omega$,
\begin{equation*}
\left( \left[ \chi_p \right]_{\Pa}(-y) - \left( \left[ \chi_p\right]_{\Pa} \right)_{\cu_m} \right) (\a) = \left( \left[ \chi_p \right]_{\Pa_y}(0) - \left( \left[ \chi_p\right]_{\Pa_y} \right)_{y +\cu_m} \right) (\tau_y \a),
\end{equation*}
where we recall the notation $\Pa_y = y + \Pa(\tau_{-y} \a)$. Using the stationarity property~\eqref{Pisstat}, we obtain, for each exponent $q \geq 1$,
\begin{equation*}
\E \left[ \left| \left[ \chi_p \right]_{\Pa_y}(0) - \left( \left[ \chi_p\right]_{\Pa_y} \right)_{y +\cu_m} \right|^q \right] =  \E \left[ \left| \left[ \chi_p \right]_{\Pa}(-y) - \left( \left[ \chi_p\right]_{\Pa} \right)_{\cu_m} \right|^q \right].
\end{equation*}
Since this result applies for each point $y \in \Zd$, we can sum over all the points in the cube $\cu_m$. We obtain
\begin{equation*}
3^{-dm} \sum_{y \in \cu_m} \E \left[ \left| \left[ \chi_p \right]_{\Pa_y}(0) - \left( \left[ \chi_p\right]_{\Pa_y} \right)_{y +\cu_m} \right|^q \right] = 3^{-dm} \sum_{y \in \cu_m}  \E \left[ \left| \left[ \chi_p \right]_{\Pa}(-y) - \left( \left[ \chi_p\right]_{\Pa} \right)_{\cu_m} \right|^q \right] .
\end{equation*}
Thus, by~\eqref{step1LinftyLp},
\begin{equation} \label{estwithy.5}
\E \left[ 3^{-dm} \sum_{y \in \cu_m}   \left| \left[ \chi_p \right]_{\Pa_y}(0) - \left( \left[ \chi_p\right]_{\Pa_y} \right)_{y +\cu_m} \right|^q \right]  \leq \left\{
  \begin{array}{lcl}
    C^q q^{qk} m^{\frac q2} & \mbox{if } d=2, \\
     C^q q^{qk} & \mbox{if } d\geq 3. \\
  \end{array}
\right.
\end{equation}
We now remove the translation of the partition and prove, for each point $z \in \Zd$
\begin{equation} \label{est.PPy}
\left| \left[ \chi_p \right]_{\Pa_y}(z) -  \left[ \chi_p \right]_{\Pa}(z) \right| \leq \O_s (C).
\end{equation}
To prove this inequality, note that, by definition of the coarsening~\eqref{d.coarsenedfnct}, we have
\begin{equation*}
\left[ \chi_p \right]_{\Pa_y}(z) -  \left[ \chi_p \right]_{\Pa}(z) =  \chi_p\left(\bar{z}(\cu_{\Pa_y} (z)) \right) - \chi_p\left(\bar{z}(\cu_{\Pa} (z))\right),
\end{equation*}
and by definition of the two partitions $\Pa$ and $\Pa_y$, there exists a path connecting the cubes $\cu_{\Pa_y} (z)$ and $\cu_{\Pa} (z)$ which lies in the ball $ B\left(z , C \max\left( \size\left( \cu_{\Pa_y}(z)\right), \size\left( \cu_{\Pa}(z)\right) \right) \right)$. To simplify the notation in the following computation, we denote by $R' = C \max\left( \size\left( \cu_{\Pa_y}(z)\right), \size\left( \cu_{\Pa}(z)\right) \right) $. We have the estimate
\begin{equation*}
\left| \left[ \chi_p \right]_{\Pa_y}(z) -  \left[ \chi_p \right]_{\Pa}(z) \right| \leq \sum_{x \in \C_\infty \cap B_{R'}(z)} \left| \nabla \chi_p \right| (x).
\end{equation*}
By Proposition~\ref{p.regularitytheory}, the bounds $R' \leq \O_s(C)$ and $\X(z) \leq \O_s(C)$ and the assumption $|p|=1$, we have
\begin{equation*}
\sum_{y \in \C_\infty \cap  B_R'(z) } \left| \nabla \chi_p \right| (y)  \leq \O_s(C).
\end{equation*} 
Combining the previous displays completes the proof of~\eqref{est.PPy}. To remove the parameter $y$ in~\eqref{estwithy.5}, we compute
\begin{align} \label{removingthey}
\lefteqn{ \E \left[3^{-dm} \sum_{y \in \cu_m}   \left| \left[ \chi_p \right]_{\Pa}(0) - \left( \left[ \chi_p\right]_{\Pa} \right)_{y +\cu_m} \right|^q  \right]} \qquad & \\  & 
		\leq 2^q \E \left[ 3^{-dm} \sum_{y \in \cu_m}   \left| \left[ \chi_p \right]_{\Pa_y}(0) - \left( \left[ \chi_p\right]_{\Pa_y} \right)_{y +\cu_m} \right|^q \right] \notag \\ &
		\quad + 2^q  \E \left[ 3^{-dm} \sum_{y \in \cu_m}   \left| \left[ \chi_p \right]_{\Pa_y}(0) - \left( \left[ \chi_p\right]_{\Pa_y} \right)_{y +\cu_m} -\left[ \chi_p \right]_{\Pa}(0) - \left( \left[ \chi_p\right]_{\Pa} \right)_{y +\cu_m} \right|^q  \right]. \notag
\end{align}
By~\eqref{est.PPy} and~\eqref{e.Oavgs}, we have, for each point $y \in \cu_m$,
\begin{equation*}
\left| \left[ \chi_p \right]_{\Pa_y}(0) - \left( \left[ \chi_p\right]_{\Pa_y} \right)_{y +\cu_m} -\left[ \chi_p \right]_{\Pa}(0) - \left( \left[ \chi_p\right]_{\Pa} \right)_{y +\cu_m} \right| \leq \O_s(C),
\end{equation*}
and thus
\begin{equation*}
\E \left[ \left| \left[ \chi_p \right]_{\Pa_y}(0) - \left( \left[ \chi_p\right]_{\Pa_y} \right)_{y +\cu_m} -\left[ \chi_p \right]_{\Pa}(0) - \left( \left[ \chi_p\right]_{\Pa} \right)_{y +\cu_m} \right|^q \right] \leq C^q q^{qk}.
\end{equation*}
Summing over the points $y \in \cu_m$ yields
\begin{equation*}
  \E \left[ 3^{-dm} \sum_{y \in \cu_m}   \left| \left[ \chi_p \right]_{\Pa_y}(0) - \left( \left[ \chi_p\right]_{\Pa_y} \right)_{y +\cu_m} -\left[ \chi_p \right]_{\Pa}(0) - \left( \left[ \chi_p\right]_{\Pa} \right)_{y +\cu_m} \right|^q \right] \leq C^q q^{qk}.
\end{equation*}
By the previous display and~\eqref{estwithy.5}, we have
\begin{equation*}
\E \left[ 3^{-dm} \sum_{y \in \cu_m}   \left| \left[ \chi_p \right]_{\Pa}(0) - \left( \left[ \chi_p\right]_{\Pa} \right)_{y +\cu_m} \right|^q \right] \leq \left\{
  \begin{array}{lcl}
    C^q q^{qk} m^{\frac q2} & \mbox{if } d=2, \\
     C^q q^{qk} & \mbox{if } d\geq 3. \\
  \end{array}
\right.
\end{equation*}
By the Jensen inequality, we obtain
\begin{equation*}
\E \left[ \left| 3^{-dm} \sum_{y \in \cu_m}   \left[ \chi_p \right]_{\Pa}(0) - \left( \left[ \chi_p\right]_{\Pa} \right)_{y +\cu_m} \right|^q \right] \leq \left\{
  \begin{array}{lcl}
    C^q q^{qk} m^{\frac q2} & \mbox{if } d=2, \\
     C^q q^{qk} & \mbox{if } d\geq 3. \\
  \end{array}
\right.
\end{equation*}
Notice that
\begin{equation*}
3^{-dm} \sum_{y \in \cu_m}   \left[ \chi_p \right]_{\Pa}(0) - \left( \left[ \chi_p\right]_{\Pa} \right)_{y +\cu_m}  = \left[ \chi_p \right]_\Pa (0) - 3^{-2dm}\sum_{y \in \cu_m} \sum_{z \in y + \cu_m} \left[ \chi_p \right]_\Pa (z).
\end{equation*}
Combining the two previous displays completes the proof of Step 1.

\medskip

\textit{Step 2.} By the stationarity of the corrector~\eqref{e.stat.corr}, for almost every environment $\a \in \Omega$, every pair of points $y,z \in \Zd$,
\begin{equation*}
\left[ \chi_p \right]_{\Pa}(z) (\a) = \left[ \chi_p \right]_{\Pa_y}(z + y) (\tau_y \a).
\end{equation*}
Using this property, we have
\begin{align*}
\E \left[ \left| \left[ \chi_p \right]_{\Pa_x} (x) - 3^{-2dm} \sum_{y \in \cu_m} \sum_{z \in x + y+ \cu_m} \left[ \chi_p \right]_{\Pa_x} (z) \right|^q \right] &= 
\E \left[ \left| \left[ \chi_p \right]_\Pa (0) - 3^{-2dm} \sum_{y \in \cu_m} \sum_{z \in y+ \cu_m} \left[ \chi_p \right]_\Pa (z) \right|^q \right] \\
& \leq \left\{
  \begin{array}{lcl}
    C^q q^{qk} m^{\frac q2} & \mbox{if } d=2, \\
     C^q q^{qk} & \mbox{if } d\geq 3. \\
  \end{array}
\right.
\end{align*}
Performing the same computation as in~\eqref{removingthey}, we can replace the partition $\Pa_x$ by $\Pa$ in the previous display. This yields
\begin{equation*}
\E \left[ \left| \left[ \chi_p \right]_{\Pa} (x) - 3^{-2dm} \sum_{y \in \cu_m} \sum_{z \in x + y+ \cu_m} \left[ \chi_p \right]_{\Pa} (z) \right|^q \right] \leq \left\{
  \begin{array}{lcl}
    C^q q^{qk} m^{\frac q2} & \mbox{if } d=2, \\
     C^q q^{qk} & \mbox{if } d\geq 3. \\
  \end{array}
\right.
\end{equation*}
This completes the proof of Step 2.

\medskip

\textit{Step 3.} This step is similar to Step 1; the main ingredient is slightly different and presented below. The objective is to prove the following inequality: for $m \in \N$ such that $3^m \leq |x| < 3^{m+1}$, and for each $q \geq 1$,
\begin{equation*}
\left( 3^{-dm} \sum_{y \in \cu_m}  \left| \left[ \chi_p \right]_{\Pa}(y) - \left( \left[ \chi_p\right]_{\Pa} \right)_{x + \cu_m} \right|^q  \right)^\frac 1q \leq  \left\{
  \begin{array}{lcl}
    \O_s \left( C q^k \sqrt{m} \right) & \mbox{if } d=2, \\
     \O_s \left( C q^k\right) & \mbox{if } d\geq 3. \\
  \end{array}
\right.
\end{equation*}
To prove this result, we note that $x + \cu_m \subseteq \cu_{m+2}$ and compute
\begin{multline} \label{eq:TV10402302}
\left( 3^{-dm} \sum_{y \in \cu_m}  \left| \left[ \chi_p \right]_{\Pa}(y) - \left( \left[ \chi_p\right]_{\Pa} \right)_{x + \cu_m} \right|^q  \right)^\frac 1q \\
\leq  \left( 3^{-dm} \sum_{y \in \cu_{m+2}}  \left| \left[ \chi_p \right]_{\Pa}(y) - \left( \left[ \chi_p\right]_{\Pa} \right)_{\cu_{m+2}} \right|^q  \right)^\frac 1q +  \left| \left( \left[ \chi_p\right]_{\Pa} \right)_{\cu_{m+2}} -  \left( \left[ \chi_p\right]_{\Pa} \right)_{x + \cu_m} \right|.
\end{multline} 
The first term is estimated by~\eqref{step1LinftyLp} (replacing the cube $\cu_m$ by $\cu_{m+2}$). We estimate the second term on the right side of~\eqref{eq:TV10402302} as follows
\begin{align*}
 \left| \left( \left[ \chi_p\right]_{\Pa} \right)_{\cu_{m+2}} -  \left( \left[ \chi_p\right]_{\Pa} \right)_{x + \cu_m} \right|&  \leq 3^{-dm}\sum_{y \in x + \cu_m}  \left| \left[ \chi_p\right]_{\Pa}(y)  - \left( \left[ \chi_p\right]_{\Pa} \right)_{\cu_{m+2}} \right| \\&
\leq C \left( 3^{-dm} \sum_{y \in \cu_{m+2}}  \left| \left[ \chi_p\right]_{\Pa}(y)  - \left( \left[ \chi_p\right]_{\Pa} \right)_{\cu_{m+2}} \right|^q \right)^\frac 1q. 
\end{align*}
Combining the two previous displays with the inequality~\eqref{maintoolstep1Linfty} shows
\begin{equation*}
\left( 3^{-dm} \sum_{y \in  \cu_m} \left| \left[ \chi_p \right]_{\Pa}(y) - \left( \left[ \chi_p\right]_{\Pa} \right)_{x + \cu_m} \right|^q  \right)^\frac 1q \leq  \left\{
  \begin{array}{lcl}
    \O_s \left( C q^k \sqrt{m} \right) & \mbox{if } d=2, \\
     \O_s \left( C q^k\right) & \mbox{if } d\geq 3. \\
  \end{array}
\right.
\end{equation*}
With the same proof as in Step 1, we obtain, for each $q \geq 1$
\begin{equation*}
\E \left[ \left|3^{-dm} \sum_{y \in \cu_m}   \left[ \chi_p \right]_{\Pa}(0) - \left( \left[ \chi_p\right]_{\Pa} \right)_{y + x +\cu_m}\right|^q \right] \leq \left\{
  \begin{array}{lcl}
    C^q q^{qk} m^{\frac q2} & \mbox{if } d=2, \\
     C^q q^{qk} & \mbox{if } d\geq 3. \\
  \end{array}
\right.
\end{equation*}
But note that
\begin{equation*}
 \left[ \chi_p \right]_\Pa (0) - 3^{-2dm} \sum_{y \in \cu_m} \sum_{z \in x + y+ \cu_m} \left[ \chi_p \right]_\Pa (z)  =  3^{-dm}\sum_{y \in \cu_m}   \left[ \chi_p \right]_{\Pa}(0) - \left( \left[ \chi_p\right]_{\Pa} \right)_{y + x +\cu_m} .
\end{equation*}
Combining the two previous displays completes the proof of Step 3.

\medskip

\textit{Step 4.} In this step, we first split the integral,
\begin{align*}
\E \left[ \left| \left[ \chi_p \right]_\Pa (x) -\left[ \chi_p \right]_\Pa (0) \right|^q \right] & \leq 2^q \E \left[ \left| \left[ \chi_p \right]_\Pa (0) - 3^{-2dm} \sum_{y \in \cu_m} \sum_{z \in x + y+ \cu_m} \left[ \chi_p \right]_\Pa (z) \right|^q \right] \\ &
\qquad +  2^q \E \left[ \left| \left[ \chi_p \right]_\Pa (x) - 3^{-2dm} \sum_{y \in \cu_m} \sum_{z \in x + y+ \cu_m} \left[ \chi_p \right]_\Pa (z) \right|^q \right] .
\end{align*}
Combining the results of Step 2 and Step 3, we have, for $m \in \N$ chosen such that $3^m \leq |x| \leq 3^{m+1}$ and for each $q \geq 1$,
\begin{equation*}
\E \left[ \left| \left[ \chi_p \right]_\Pa (x) -\left[ \chi_p \right]_\Pa (0) \right|^q \right] \leq \left\{
  \begin{array}{lcl}
    C^q q^{qk} m^{\frac q2} & \mbox{if } d=2, \\
     C^q q^{qk} & \mbox{if } d\geq 3. \\
  \end{array}
\right.
\end{equation*}
Since $m \leq \frac{\log |x|}{\log 3}$, the proof of Step 3 is complete.

\medskip

\textit{Step 5.} First we extend the result of Step 4 to the case $0 < q  < 1$. By the Jensen inequality, we have, for each $ 0 < q  \leq 1$,
\begin{equation*}
\E \left[ \left| \left[ \chi_p \right]_\Pa (x) -\left[ \chi_p \right]_\Pa (0) \right|^q \right] \leq \E \left[ \left| \left[ \chi_p \right]_\Pa (x) -\left[ \chi_p \right]_\Pa (0) \right|^2\right]^{\frac q2} \leq \left\{
  \begin{array}{lcl}
    C^q \log^{\frac q2} |x| & \mbox{if } d=2, \\
     C^q & \mbox{if } d\geq 3. \\
  \end{array}
\right.
\end{equation*}

We now prove the main result of this step. We first deal with the case $d = 2$. Select an exponent $s > 0$ depending only on $d , \p , \lambda$ such that $s < \frac{1}{k}$, where $k$ is the exponent (depending only on $d , \p , \lambda$) which appears in~\eqref{est.step4.linfty}. We then compute
\begin{align*}
\E \left[\exp \left(  \left( \frac{ \left| \left[ \chi_p \right]_\Pa (x) -\left[ \chi_p \right]_\Pa (0) \right|}{\log^\frac 12 |x|} \right)^s \right) \right]  
		& = \sum_{l = 0}^{\infty} \frac{1}{\fact{l}} \E \left[ \frac{ \left| \left[ \chi_p \right]_\Pa (x) -\left[ \chi_p \right]_\Pa (0) \right|^{sl}}{\log^\frac{sl}{2} |x|} \right] \\
		& \leq  \sum_{l = 0 }^{\lfloor \frac 1s \rfloor}  \frac{C^{sl}}{\fact{l}} +  \sum_{l = \lceil\frac 1s \rceil}^{\infty} \frac{C^{sl} \left( sl \right)^{skl}}{\fact{l}} \\
		& < \infty,
\end{align*}
by the Stirling formula. We now set $\sigma := \min\left( \frac{\log 2}{\log \left(  \sum_{l = 0 }^{\lfloor \frac 1s \rfloor}  \frac{C^{sl}}{\fact{l}} +  \sum_{l = \lceil\frac 1s \rceil}^{\infty} \frac{C^{sl} \left( sl \right)^{skl}}{\fact{l}}\right) }, 1 \right) > 0$. Note that $\sigma$ depends only on $d , \p ,  \lambda $. With this value of $\sigma$, we have
\begin{equation*}
\E \left[\exp \left( \sigma \left( \frac{ \left| \left[ \chi_p \right]_\Pa (x) -\left[ \chi_p \right]_\Pa (0) \right|}{\log^\frac 12 |x|} \right)^s \right) \right] \leq 2.
\end{equation*}
From this computation, we obtain
\begin{equation*}
\left| \left[ \chi_p \right]_\Pa (x) -\left[ \chi_p \right]_\Pa (0) \right| \leq \O_s \left( \sigma^{-\frac 1s} \log^\frac 12 |x| \right).
\end{equation*}
Setting $C := \sigma^{-\frac 1s} $, we obtain~\eqref{est.step5.linfty}. The proof in dimension $d \geq 3$ follows the same lines and is even simpler since we do not have the square root of the logarithm.

\medskip

\textit{Step 6.} In this step, we remove the coarsening. To this end, we prove, for each point $y \in \Zd$,
\begin{equation*}
\left| \chi_p(y) - \left[ \chi_p \right]_\Pa (y) \right| \indc_{\{ y \in \C_\infty \}} \leq \O_s  (C).
\end{equation*}
Note that if a point $y$ belongs to the infinite cluster then there exists a path connecting $y$ to $\bar{z} \left( \cu_\Pa (y) \right)$ which lies in the cube $\cu_\Pa (y)$ and its neighbors. Consequently we have the estimate
\begin{equation*}
\left| \chi_p(y) - \left[ \chi_p \right]_\Pa (y) \right| \indc_{ \{ y \in \C_\infty \}} \leq \sum_{x \in \C_\infty \cap B(y , C  \size \left( \cu_\Pa (y) \right))} \left| \nabla \chi_p \right| (x).
\end{equation*}
Applying Proposition~\ref{p.regularitytheory} gives
\begin{equation} \label{eq:TV10432302}
\left| \chi_p(y) - \left[ \chi_p \right]_\Pa (y) \right| \indc_{\{ y \in \C_\infty \}} \leq \O_s  (C).
\end{equation}
We deduce that
\begin{multline*}
\left| \chi_p(x) - \chi_p(0) \right| \indc_{\{ 0,x \in \C_\infty\}} \\ \leq  \left| \chi_p(0) - \left[ \chi_p \right]_\Pa (0) \right| \indc_{\{ 0 \in \C_\infty \}} +  \left| \chi_p(x) - \left[ \chi_p \right]_\Pa (x) \right| \indc_{\{ x \in \C_\infty \}}  +  \left| \left[ \chi_p \right]_\Pa (x) -\left[ \chi_p \right]_\Pa (0) \right|.
\end{multline*}
Combining the result of Step 5 with the inequality~\eqref{eq:TV10432302} shows
\begin{equation*}
\left| \chi_p(x) - \chi_p(0) \right| \indc_{\{ 0,x \in \C_\infty\}}\leq \left\{  \begin{array}{lcl}
    \O_{s} \left(C  \log^\frac 12 |x| \right) & \mbox{if } d=2, \\
     \O_{s}  \left(C  \right) & \mbox{if } d\geq 3. \\
  \end{array}
\right.
\end{equation*}
The proof of Step 6 is complete.
\end{proof}

\appendix

\section{Proof of the $L^q$ multiscale Poincar\'e inequality} \label{appA}
In this appendix, we prove the $L^q$ multiscale Poincar\'e inequality stated in Proposition~\ref{multiscalepoincare}. Contrary to the rest of the article, we prove the result in the continuous setting. This choice is motivated by the two reasons listed below:
\begin{enumerate}
    \item The argument relies on the statement of Proposition D.1 and Remark D.6 of~\cite{armstrong2017quantitative}, which are stated in the continuous setting;
    \item We need to rescale equations and use results of elliptic regularity; it is thus easier to work in the continuous setting.
\end{enumerate}
The discrete version of the inequality stated in Proposition~\ref{multiscalepoincare} can be deduced from the continuous one by standard arguments.

We first introduce a few definitions pertaining to the continuous setting. We denote by $C_c^\infty (\Rd , \R)$ (resp.  $C^\infty (\Rd , \R)$) the set of smooth compactly supported (resp. smooth) functions in $\Rd$, by $\mathcal{S}(\Rd)$ the Schwartz space, i.e.,
\begin{equation*}
\mathcal{S}(\Rd) := \left\{ f \in C^\infty (\Rd , \R) : \forall \left( k , \alpha_1 , \cdots , \alpha_d\right) \in \N^{d+1}, \quad \sup_{x \in \Rd} |x|^k \left| \partial_1^{\alpha_1} \cdots \partial_d^{\alpha_d} f (x) \right| < \infty \right\}
\end{equation*} 
and by $\mathcal{S}'(\Rd)$ its topological dual, the space of tempered distribution. Given a domain $U \subseteq \Rd$ a domain, we denote by $C_c^\infty (U , \R)$ (resp.  $C^\infty (U , \R)$) the set of smooth compactly supported (resp. smooth) functions in $U$.

For $q\in [1,\infty)$, we denote the $L^q(U)$ and normalized $L^q(U)$ norms by
\begin{equation*}
\left\| w \right\|_{L^q(U)} := \left( \int_U \left|w(x)\right|^q\,dx\right)^{\frac1q} \mbox{ and } \left\| w \right\|_{\underline{L}^q(U)} := \left( \fint_U \left|w(x)\right|^q\,dx\right)^{\frac1q}.
\end{equation*}
For $k \in \N$, we denote by $W^{k,q}(U)$ the Sobolev space, by $W^{k,q}_0(U)$ the closure of $C_c^\infty (U, \R)$ in $W^{k,q}(U)$, and by $W^{k,q}_\mathrm{loc}(U)$ the space of local Sobolev functions. For $k \in \Z$ with $k < 0$, we denote by $W^{k,q}(U)$ the topological dual of $W^{-k,p}_0(U)$, with $p = \frac{q}{q-1}$.

\begin{proposition} [Multiscale Poincar\'e inequality, heat kernel version] 
For each radius $r>0$, we define the continuous heat kernel
\begin{equation} \label{def.heatker}
\tilde \Phi_{r^2}  : \left\{  \begin{array}{ccc}
\Rd & \to &\R \\
 x & \mapsto & r^{-d}\exp \left(- \frac{|x|^2}{r^2}\right). \\
\end{array} \right.
\end{equation}
For each exponent $q \geq 2$, there exists a constant $C:= C(d , q) < \infty$ such that for each tempered distribution $u \in W^{1,q}_{\mathrm{loc}} (\Rd) \cap \mathcal{S}'(\Rd)$ and each radius $R>0$,
\begin{equation} \label{heatmultscaleP}
\left\| u - (u)_{B_R}  \right\|_{\underline{L}^q\left(B_R\right)} \leq C  \left( \int_{\Rd} R^{-d}e^{ - \frac{|x|}{R} } \left( \int_{0}^{R} r \left| \tilde \Phi_{r^2} * \nabla u (x) \right|^2 \, dr  \right)^{\frac q2} \right)^\frac1q.
\end{equation}
Moreover the dependence on the $q$ variable of the constant $C$ can be estimated as follows, for each $q \geq 2$,
\begin{equation*}
C (d, q ) \leq A q^{\frac 32},
\end{equation*} 
for some constant $A := A(d) < \infty$.
\end{proposition}

Before starting the proof, we need to state the following proposition from~\cite[Proposition D.1 and Remark D.6]{armstrong2017quantitative} and to record a result from the elliptic regularity theory.
\begin{proposition}[Proposition D.1 and Remark D.6 of~\cite{armstrong2017quantitative}] \label{local.sob.est}
For each $q \geq 2$, there exists a constant $C := C(d , q ) < \infty$ such that for every tempered distribution $w \in \mathcal{S}'(\Rd)$,
\begin{equation} \label{eq:TV14151802}
\left\|w \right\|_{W^{-1,q}(B_1)} \leq C  \left( \int_{\Rd} e^{ - |x| } \left( \int_{0}^{1} r \left| \tilde \Phi_{r^2} * w (x) \right|^2 \, dr  \right)^{\frac q2} \right)^\frac1q.
\end{equation}
Moreover the constant $C$ satisfies, for each exponent $q \geq 2$,
$C (d, q) \leq A \sqrt{q},$ for some constant $A := A(d) < \infty$.
\end{proposition}

\begin{remark}
The statement of~\cite[Proposition D.1 and Remark D.6]{armstrong2017quantitative} is not identical to the statement of Proposition~\ref{local.sob.est}; one needs to perform the change of variables $r^2 := t$ to obtain the estimate~\eqref{eq:TV14151802} from the one of~\cite{armstrong2017quantitative}.
\end{remark}

\begin{remark}
The dependence on the $q$ variable of the constant $C$ is not explicit in~\cite{armstrong2017quantitative}. It can be recovered from the proof.
\end{remark}

We then record a result from the theory of elliptic regularity, it can be found in~\cite[Lemma 7.12 and Proposition 9.9]{gilbarg2015elliptic}.
\begin{proposition}[Lemma 7.12 and Proposition 9.9 of~\cite{gilbarg2015elliptic}] \label{elliptic.reg}
Let $V \subseteq \R^d$ be a bounded domain of~$\Rd$. Let $f \in L^p \left( V \right)$, $1<  p < \infty$, and let $w$ be the Newtonian potential of $f$, i.e.,
\begin{equation*}
w(x) := \int_{V} \Gamma (x-y) f(y) \, dy,
\end{equation*}
where $\Gamma$ is the fundamental solution of the Laplace equation, i.e.,
\begin{equation*}
\Gamma(x) :=  \left\{  \begin{array}{lcl}
   \frac{1}{2\pi} \log |x| & \mbox{if } d=2, \\
    \frac{1}{d(2-d) \omega_d} |x|^{2-d} & \mbox{if } d\geq 3, \\
  \end{array}
\right.
\end{equation*}
where $\omega_d$ is the volume of the unit sphere in $\Rd$. Then $w \in W^{2,p}(V), \Delta w = f$ a.e and
\begin{equation*}
\left\| \nabla^2 w\right\|_{L^p \left( V \right)} \leq C_0 \left\| f \right\|_{L^p \left( V \right)} 
\end{equation*}
and
\begin{equation*}
\left\|  w\right\|_{L^p \left( V \right)} + \left\| \nabla w\right\|_{L^p \left( V \right)} \leq C_1 \left\| f \right\|_{L^p \left( V \right)},
\end{equation*}
for some constants $C_1 :=  C_1(d, V) < \infty$ and $C_0:= C_0(d , p, V) < \infty$. Moreover, the dependence on $p$ of the constant $C_0$ can be explicited: 
\begin{equation*}
C_0(d , p, V) \leq A p, ~ \mbox{if}~ p \geq 2 \qquad \mbox{and} \qquad C_0(d , p, V) \leq A \frac1{p-1} ~\mbox{if} ~ 1 < p \leq 2,
\end{equation*}
for some $A:= A(d, V) < \infty$.
\end{proposition}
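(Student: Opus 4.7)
The plan is to reduce the two estimates to standard harmonic-analysis facts, with careful bookkeeping of the dependence on $p$.

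For the lower-order bound $\|w\|_{L^p(\Omega)}+\|\nabla w\|_{L^p(\Omega)} \leq C_1\|f\|_{L^p(\Omega)}$, I would simply invoke Young's convolution inequality. Since $\Omega$ is bounded, $\Gamma$ (with $|\Gamma(x)|\lesssim |x|^{2-d}$ for $d\geq 3$ and logarithmic for $d=2$) and $\nabla\Gamma$ (with $|\nabla\Gamma(x)| \lesssim |x|^{1-d}$) are both integrable on the ball $B_{\diam(\Omega)}$. Extending $f$ by zero to $\Rd$ and writing $w = \Gamma*f$, $\nabla w = \nabla\Gamma * f$, Young's inequality gives $\|w\|_{L^p(\Omega)} \leq \|\Gamma\|_{L^1(B_{\diam\Omega})}\|f\|_{L^p(\Omega)}$ and similarly for $\nabla w$, with a constant $C_1(d,\Omega)$ independent of $p$.

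The core of the proof is the second-order estimate. The operator $T_{ij}f := \partial_i\partial_j(\Gamma*f)$ is a convolution with the principal-value kernel $K_{ij}=\partial_i\partial_j \Gamma$, which is homogeneous of degree $-d$ with vanishing mean on spheres; equivalently, its Fourier multiplier is $-\xi_i\xi_j/|\xi|^2$. I would run the classical Calder\'on--Zygmund scheme in three steps: (i) Plancherel yields $\|T_{ij}f\|_{L^2} \leq \|f\|_{L^2}$ with constant $1$; (ii) smoothness of $K_{ij}$ away from the origin gives the H\"ormander regularity condition $\sup_{y}\int_{|x|>2|y|}|K_{ij}(x-y)-K_{ij}(x)|\,dx \leq A(d)$, derived from the mean value theorem and the degree-$(-d-1)$ bound on $\nabla K_{ij}$; (iii) for $\alpha>0$, decompose $f=g+\sum_Q b_Q$ at height $\alpha$ via the stopping-time Calder\'on--Zygmund decomposition on dyadic cubes. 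The essential bound $|g|\leq A\alpha$ combined with (i) controls $Tg$, while the cancellation $\int b_Q = 0$ combined with (ii) controls $T b_Q$ outside the double cube $2Q$. Summing these pieces gives the weak-type $(1,1)$ estimate $|\{|T_{ij}f|>\alpha\}| \leq A_1 \alpha^{-1}\|f\|_{L^1}$, with $A_1=A_1(d)$ depending only on dimension.

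Marcinkiewicz interpolation between weak-$(1,1)$ and strong-$(2,2)$ then yields the $L^p$ bound on $T_{ij}$ for $1<p\leq 2$. Carrying the constants through the layer-cake integration $\int_0^\infty p\alpha^{p-1}|\{|T_{ij}f|>\alpha\}|\,d\alpha$, after splitting $f$ at level $\alpha$ itself, produces a geometric series whose ratio tends to one as $p\to 1^+$; the resulting bound is of order $A/(p-1)$ on $(1,2]$. For $p\geq 2$ I would use duality: since $T_{ij}$ is self-adjoint up to a reflection of the kernel (which preserves all norms), the bound $L^{p'}\to L^{p'}$ with $p'=p/(p-1)\in(1,2]$ transfers to $L^p\to L^p$ with the same constant $A/(p'-1)=A(p-1)\leq Ap$. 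Finally $\Delta w=f$ a.e.\ follows from $\Delta\Gamma=\delta$ distributionally, and $\partial_i\partial_j w = T_{ij}f$ combined with the $L^p$ bound on $T_{ij}$ gives $w\in W^{2,p}(\Omega)$ together with the advertised norm inequality. The delicate point, and the main obstacle, is extracting the sharp $(p-1)^{-1}$ blow-up from Marcinkiewicz: a crude application would give a suboptimal factor such as $1/((p-1)(2-p))$ which needs to be absorbed by observing that the CZ decomposition can be done exactly at level $\alpha$ (not a multiple of it), turning one of the divergent factors into a harmless geometric tail.
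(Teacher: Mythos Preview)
Your approach is correct and matches the paper's treatment: the paper does not prove this proposition itself but cites Gilbarg--Trudinger (whose argument is exactly the Calder\'on--Zygmund scheme you sketch) and remarks that the explicit $p$-dependence is obtained by tracking the constant through the Marcinkiewicz interpolation. The only quibble is your last sentence: the unwanted $1/(2-p)$ factor from a crude Marcinkiewicz bound is removed not by adjusting the Calder\'on--Zygmund height but simply by using the strong $L^2$ bound from Plancherel directly near $p=2$ --- e.g.\ on $[3/2,2]$ Riesz--Thorin between $L^{3/2}$ and $L^2$ gives a bounded constant, while on $(1,3/2]$ Marcinkiewicz already yields a constant of order $A/(p-1)$.
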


Before starting the proof, we mention that the dependence on the $p$ variable is not explicit in~\cite[Proposition 9.9]{gilbarg2015elliptic}; it can be recovered by keeping track of the constant $p$ in the application of the Marcinkiewicz interpolation theorem. Let us also mention that the case of the logarithmic potential is not considered in~\cite[Lemma 7.12]{gilbarg2015elliptic} (it is useful to obtain the estimate of the $L^p$ norm of $w$ in dimension 2). Nevertheless their proof is general enough to be applied in this setting.

\begin{proof}[Proof of Proposition~\ref{multiscalepoincare}]
Let $\psi \in C_c^\infty \left( B_{\frac 14} , \R \right)$ and $2 \leq q < \infty$. We denote by $p$ the conjugate exponent of $q$, i.e., $p := \frac{q}{q-1} \in (1 , 2].$ We split the proof into 5 steps.
\begin{itemize}
\item In Step 1, we show that there exists a constant $C:= C(d , \psi) < \infty$ such that, for each function $u \in W^{1,q} \left( B_1 \right),$
\begin{equation} \label{step1TVaf}
\left\| u - \psi * u \right\|_{W^{-1,q}\left(B_\frac 34\right)} \leq C  \left\| \nabla u \right\|_{W^{-1,q}(B_1)}.
\end{equation}
\item In Step 2, we prove that there exists a constant $C:= C(d , \psi) < \infty$ such that, for each function $u \in W^{1,q} \left( B_1 \right), $
\begin{equation} \label{ste2.multscale}
\left\| u - \psi * u(0) \right\|_{W^{-1,q}\left(B_\frac 34\right)} \leq C  \left\| \nabla u \right\|_{W^{-1,q}(B_1)}.
\end{equation}
\item In Step 3, we prove that there exists a constant $C:= C(d , q, \psi) < \infty$ such that, for each function $u \in W^{1,q} \left( B_1 \right), $
\begin{equation*}
\left\| u \right\|_{L^q\left(B_{\frac 12}\right)}  \leq C\left\|\nabla u \right\|_{W^{-1,q}\left(B_{\frac 34}\right)}  + C \left\|  u \right\|_{W^{-1,q}\left(B_\frac 34 \right)}
\end{equation*}
and that the constant $C$ satisfies $C(d , \psi , q) \leq A q $ for some $A := A(d, \psi) < \infty$.
\item In Step 4, we show that there exists a constant $C:= C(d , q, \psi) < \infty$ such that, for each function $u \in W^{1,q} \left( B_1 \right), $
\begin{equation*}
\left\| u- \left( u \right)_{B_{\frac 12}} \right\|_{L^q\left(B_{\frac 12}\right)}  \leq C\left\|\nabla u \right\|_{W^{-1,q}(B_1)}  
\end{equation*}
and that the constant $C$ satisfies $C(d , \psi , q) \leq A q $ for some constant $A := A(d, \psi) < \infty$.
\item In Step 5, we show that for each tempered distribution $u \in W^{1,q}_{\mathrm{loc}} (\Rd) \cap \mathcal{S}'(\Rd)$ and each radius $R>0$,
\begin{equation*}
\left\| u - (u)_{B_R}  \right\|_{\underline{L}^q\left(B_R\right)} \leq C  \left( \int_{\Rd} R^{-d}e^{ - \frac{|x|}{2R} } \left( \int_{0}^{2R} r \left| \tilde \Phi_{r^2} * \nabla u (x) \right|^2 \, dr  \right)^{\frac q2} \right)^\frac1q
\end{equation*}
and that the constant $C$ satisfies $C(d , q) \leq A q^{\frac 32} $ for some $A := A(d) < \infty$.
\end{itemize}

\medskip

\textit{Step 1.} We prove that there exists a constant $ C := C(d) < \infty$ such that
\begin{equation*}
\left\| u - u * \psi \right\|_{W^{-1,q}\left(B_\frac 34\right)}  \leq C  \left\| \nabla u \right\|_{W^{-1,q}(B_1)}.
\end{equation*}
Define, for each integer $n \in \N$,
\begin{equation*}
\psi_n := 2^{-dn} \psi \left( 2^n \cdot \right).
\end{equation*}
Since $\psi_n * u \rightarrow u$ in $L^q\left(B_\frac 34\right)$, we can use the triangle inequality to bound
\begin{equation} \label{littpaley.est}
\left\| u - \psi * u \right\|_{W^{-1,q}\left(B_\frac 34\right)}  \leq \sum_{n= 0}^\infty \left\| \psi_{n+1}  * u - \psi_n * u \right\|_{W^{-1,q}\left(B_\frac 34\right)} .
\end{equation}
Since the function $\psi_1 - \psi_0$ is compactly supported in $B_\frac14$ and of mean $0$, we can apply~\cite[Lemma 5.7]{armstrong2017quantitative}, to show that there exists a function $\Psi \in C_c^\infty \left(B_\frac14, \Rd \right)$ satisfying
\begin{equation*}
\nabla \cdot \Psi = \psi_1 - \psi_0 .
\end{equation*}
For each integer $n \in \N$, we denote
\begin{equation*}
\Psi_n := 2^{-dn} \Psi \left( 2^n \cdot \right),
\end{equation*}
by scaling invariance we also have
\begin{equation*}
2^{-n}\nabla \cdot \Psi_n = \psi_{n+1} - \psi_n.
\end{equation*}
For each function $g \in W^{1,p}_0\left(B_\frac 34\right)$, we have
\begin{align*}
\int_{\left(B_\frac 34\right)} \left(\psi_{n+1}  - \psi_n \right) * u(x) g(x) \, dx & = 2^{-n}\int_{\Rd}\int_{\Rd}  \nabla \cdot \Psi_n (x-y) u(y) g(x) \, dx dy \\
																& = 2^{-n} \int_{\Rd} \nabla u(y) \cdot \left( \int_{\Rd}  \Psi_n (x-y) g(x) \, dx \right) dy.
\end{align*}
By construction, the function $y \rightarrow \left( \int_{\Rd}  \Psi_n (x-y) g(x) \, dx \right)$ is supported in $B_1$, we can thus estimate
\begin{equation*}
\left| \int_{B_\frac 34} \left(\psi_{n+1}  - \psi_n \right) * u(x) g(x) \, dx \right| \leq 2^{-n} \left\| \int_{\Rd}  \Psi_n (x-\cdot) g(x) \, dx   \right\|_{W^{1,p}_0\left(B_1\right)}  \left\| \nabla u \right\|_{W^{-1,q}(B_1)}.
\end{equation*}
Moreover, one can check that there exists a constant $C:= C(d , \psi) < \infty$ such that
\begin{equation*}
 \left\|  \int_{\Rd}  \Psi_n (x-\cdot) g(x) \, dx  \right\|_{W^{1,p}_0\left(B_1\right)}  \leq C \left\| g\right\|_{W^{1,p}_0\left(B_1\right)} = C \left\| g\right\|_{W^{1,p}_0\left(B_\frac 34\right)}.
\end{equation*}
Taking the supremum over the functions $g \in W^{1,p}_0\left(B_\frac 34\right)$ of norm less than $1$ and combining this result with~\eqref{littpaley.est}, we obtain
\begin{equation*}
\left\| u - \psi * u \right\|_{W^{-1,q}\left(B_\frac 34\right)} \leq C  \left\| \nabla u \right\|_{W^{-1,q}(B_1)},
\end{equation*}
for some constant $C : = C(d) < \infty$. The proof of Step 1 is complete.

\medskip

\textit{Step 2.} We split the norm
\begin{equation} \label{firststep2multscale}
\left\| u - \psi * u (0) \right\|_{W^{-1,q}\left(B_\frac 34\right)}  \leq \left\| u - \psi * u  \right\|_{W^{-1,q}\left(B_\frac 34\right)}  + \left\| \psi * u  -\psi * u(0) \right\|_{W^{-1,q}\left(B_\frac 34\right)} .
\end{equation} 
But note that, for each point $x \in B_\frac 34$, 
\begin{equation}\label{est.ab}
\left| \psi * u (x) - \psi * u (0)\right|  \leq C \left\| \nabla u \right\|_{W^{-1,q}(B_1)}.
\end{equation}
The proof of this estimate is similar to the previous step. By~\cite[Lemma 5.7]{armstrong2017quantitative}, we represent $\psi (\cdot - x) - \psi$ in the form
\begin{equation*}
\nabla \cdot \Psi_x = \psi (\cdot - x) - \psi
\end{equation*}
where $\Psi_x \in C_c^\infty \left( B_1 , \R \right)$ is bounded in $W^{1,p}_0(B_1)$ uniformly in the point $x \in B_{\frac 34}$. We then prove~\eqref{est.ab} thanks to an integration by parts. From this argument, we deduce that
\begin{equation*}
 \left\| \psi * u  -\psi * u(0) \right\|_{W^{-1,q}\left(B_\frac 34\right)}  \leq C \left\| \nabla u \right\|_{W^{-1,q}(B_1)}.
\end{equation*}
Combining this estimate with~\eqref{firststep2multscale} and the estimate~\eqref{step1TVaf} completes the proof of Step 2.

\medskip

\textit{Step 3.} 
Let  $\eta \in C_c^\infty \left(B_1\right)$ be a cutoff function satisfying
\begin{equation*}
\indc_{B_{\frac 12}} \leq \eta \leq \indc_{B_{\frac 34}}, \qquad \mbox{and} \qquad \left|  \nabla^2 \eta \right| +  \left|  \nabla \eta \right| \leq C.
\end{equation*}
For any function $f \in L^p\left( B_1 \right)$, we denote by $w_f$ the Newtonian potential of $f$ introduced in Proposition~\ref{elliptic.reg} with the open set $V = B_1$. We then compute
\begin{align*}
\int_{B_1} \eta(x) u(x) f(x) \, dx & = \int_{B_1} \eta(x) u(x) \Delta w_f (x) \, dx \\
							& = \int_{B_1}\nabla \eta (x)  u(x) \nabla w_f (x)  + \eta (x)\nabla u (x) \nabla w_f (x) \, dx \\
							& \leq \left\| u \right\|_{W^{-1,q}\left(B_{\frac 34}\right)} \left\| \nabla\eta \nabla w_f \right\|_{W^{1,p}_0\left(B_{\frac 34}\right)} +  \left\| \nabla u \right\|_{W^{-1,q}\left(B_{\frac 34}\right)}  \left\| \eta \nabla w_f \right\|_{W^{1,p}_0\left(B_{\frac 34}\right)} .
\end{align*}
By the properties of the function $\eta$ and by Proposition~\ref{elliptic.reg}, we have
\begin{equation*}
 \left\| \nabla\eta \nabla w_f \right\|_{W^{1,p}_0\left(B_{\frac 34}\right)} + \left\| \eta \nabla w_f \right\|_{W^{1,p}_0\left(B_{\frac 34}\right)} =  \left\| \nabla\eta \nabla w_f \right\|_{W^{1,p}_0(B_1)} + \left\| \eta \nabla w_f \right\|_{W^{1,p}_0(B_1)} \leq C \left\| f \right\|_{L^p\left( B_1 \right)},
\end{equation*}
for some constant $C:= C(d ,p, \eta ) < \infty$ satisfying
\begin{equation*}
C(d , p, \eta) \leq A \frac1{p-1},
\end{equation*}
with $A:= A(d, \eta) < \infty$. Consequently
\begin{align*}
\left\| u \right\|_{L^q\left(B_{\frac 12}\right)}  \leq \left\| \eta u \right\|_{L^q\left(B_{1}\right)} 
										& = \sup_{f \in L^p\left( B_1 \right) , \left\| f \right\|_{L^p\left( B_1 \right)} = 1} \int_{B_1} \eta(x) u(x) f(x) \, dx \\
										& \leq C \left( \left\| u \right\|_{W^{-1,q}\left(B_\frac 34\right)} + \left\| \nabla u \right\|_{W^{-1,q}\left(B_{\frac 34}\right)} \right).
\end{align*}
The proof of Step 3 is complete.

\medskip

\textit{Step 4.} Applying the main result of Step 3 to the function $u - \psi * u(0)$, we have
\begin{equation*}
\left\| u - \psi * u(0) \right\|_{L^q\left(B_{\frac 12}\right)}  \leq C \left( \left\| u - \psi * u(0) \right\|_{W^{-1,q}\left(B_\frac 34\right)} + \left\| \nabla u \right\|_{W^{-1,q}(B_1)} \right).
\end{equation*}
Then by Step 2, we obtain
\begin{equation*}
\left\| u - \psi * u(0) \right\|_{L^q\left(B_{\frac 12}\right)}  \leq C \left\| \nabla u \right\|_{W^{-1,q}(B_1)} .
\end{equation*}
But we have, for each $a \in \R$
\begin{equation*}
\left\| u- \left( u \right)_{B_{\frac 12}} \right\|_{L^q\left(B_{\frac 12}\right)} \leq 2 \left\| u-a \right\|_{L^q\left(B_{\frac 12}\right)} .
\end{equation*}
Thus
\begin{equation*}
\left\| u- \left( u \right)_{B_{\frac 12}} \right\|_{L^q\left(B_{\frac 12}\right)}  \leq 2 \inf_{a \in \R}\left\| u-a \right\|_{L^q\left(B_{\frac 12}\right)} \leq 2 \left\| u - \psi * u(0) \right\|_{L^q\left(B_{\frac 12}\right)} .
\end{equation*}
Combining the previous displays completes the proof of Step 4.

\medskip

\textit{Step 5.} Applying the result of Step 4 and Proposition~\ref{local.sob.est}, we obtain, for each $q \geq 2$ and each $u \in \mathcal{S}' \left( \Rd \right) \cap W^{1,q}_{\mathrm{loc}} \left( \Rd \right)$,
\begin{equation*}
\left\| u- \left( u \right)_{B_{\frac 12}} \right\|_{L^q\left(B_{\frac 12}\right)}  \leq C  \left( \int_{\Rd} e^{ - |x| } \left( \int_{0}^{1} r \left| \tilde \Phi_{r^2} * \nabla u (x) \right|^2 \, dr  \right)^{\frac q2} \right)^\frac1q,
\end{equation*}
for some constant $C:= C(d , q)$ satisfying $C (d, q) \leq A q^\frac 32$. Rescaling the previous estimates eventually shows
\begin{equation*}
\left\| u - (u)_{B_R}  \right\|_{\underline{L}^q\left(B_R\right)} \leq C  \left( \int_{\Rd} R^{-d}e^{ - \frac{|x|}{2R} } \left( \int_{0}^{2R} r \left| \tilde \Phi_{r^2} * \nabla u (x) \right|^2 \, dr  \right)^{\frac q2} \right)^\frac1q.
\end{equation*}
and the proof of Proposition~\ref{multiscalepoincare} is complete.
\end{proof}

\section{Elliptic inequalities on the supercritical percolation cluster}\label{appC} In this section, we record some simple elliptic inequalities, the Caccioppoli inequality and the Meyers estimate. These inequalities were written in~\cite{AD2} for harmonic functions. In our context, we need to apply these results when the right-hand term is not $0$ but the divergence of a vector field.

\begin{proposition}[{Caccioppoli inequality}]
\label{l.caccioppoli}
Assume that we are given a function $u : \C_\infty \rightarrow \R$ and a vector field $\xi : E_d \rightarrow \R$ satisfying the following condition
\begin{equation} \label{vectorfieldcluster}
\xi (x,y) = 0 \mbox{ if} ~ \a(x,y) = 0 \mbox{ or } x,y \notin \C_\infty.
\end{equation}
In particular, gradients of functions defined on the infinite cluster satisfy this condition by~\eqref{defvectorfieldcluster}. Assume that $u$ is solution of the equation,
\begin{equation*}
- \nabla \cdot \a \nabla u  = - \nabla \cdot \xi \mbox{ in } \C_\infty.
\end{equation*}
Select two bounded sets  $U, V \subseteq \Zd$ such that $V \subseteq U$ and $\dist(V,\partial U) \geq r\geq 1$. 
Then there exists a constant $C(\lambda)<\infty$ such that 
\begin{equation}
\label{e.caccioppoli}
\int_{\C_\infty \cap V} \left| \nabla u\right|^2(x)\,dx \leq \frac{C}{r^2} \int_{\C_\infty \cap \left( U \setminus V \right)} \left| u(x) \right|^2\,dx +C \int_{\C_\infty \cap U }  |\xi|^2(x) \, dx.
\end{equation}
\end{proposition}

\begin{proof}
The strategy of the proof follows the standard technique to prove the Caccioppoli inequality, we select a cutoff function $\eta : \Zd \rightarrow \R$ satisfying 
\begin{equation}
\label{e.etaprops}
\indc_{V} \leq \eta \leq 1, ~ \eta \equiv 0 \ \mbox{on} \ \Rd \setminus U,  \mbox{ and }  \forall x,y \in \Zd \mbox{ such that } x \sim y, ~ \left| \eta(x) - \eta(y) \right|^2 \leq \frac{C\left( \eta(x)+ \eta(y) \right)}{r^2},
\end{equation}
test the equation satisfied by $u$ with the function $\eta u$ and perform standard computations.
\end{proof}

The second elliptic estimate needed in this article is the Meyers estimate. This estimate was also proved in~\cite{AD2} in the case of $\a$-harmonic functions.

\begin{repproposition}{meyercorrector}[Meyers estimate]
There exist a constant $C := C(d,\lambda,\p) < \infty$, two exponents $s := s(d, \lambda, \p) > 0$ and $\ep := \ep(d, \lambda, \p) >0$ and a random variable $\M_{\mathrm{Meyers}} \leq \O_s(C)$ such that for each $m\in \N $ with $3^m \geq \M_{\mathrm{Meyers}}$, and each function $v : \C_\infty \rightarrow \R$ satisfying
\begin{equation*}
- \nabla \cdot \a \nabla v = - \nabla \cdot \xi \mbox{ in } \C_\infty,
\end{equation*}
for some vector field $\xi : E_d \rightarrow \R$ satisfying~\eqref{vectorfieldcluster}, the following estimate holds,
\begin{multline} \label{repeqmeyercorrector}
\left( \frac{1}{|\cu_m|} \int_{\cu_m \cap \C_\infty} \left| \nabla v \right|^{2+\ep}(x)\,dx  \right)^{\frac1{2+\ep}} \\
\leq C \left( \frac1{\left| \frac 43 \cu_m \right|}\int_{\frac 43 \cu_m\cap \C_\infty} \left|\nabla v \right|^{2}(x)\,dx \right)^{\frac1{2}} + C  \left( \frac1{\left| \frac 43 \cu_m \right|} \int_{\frac 43\cu_m \cap \C_\infty} \left| \xi \right|^{2+\ep}(x)\,dx \right)^{\frac1{2+\ep}}.
\end{multline}
\end{repproposition}

\begin{proof}[Proof of Proposition~\ref{meyercorrector}]
The results of Proposition 3.8 and Definition 3.9 of~\cite{AD2} can be adapted in our context to prove~\eqref{repeqmeyercorrector}. The Meyers estimate is a consequence of the three following ingredients: the Caccioppoli inequality, the Sobolev inequality and the Gehring's lemma. Proposition~\ref{l.caccioppoli} provides a version of the Caccioppoli inequality well-suited to deal with a divergence form right-hand side. The Sobolev inequality is valid for any functions. The usual version of the Gehring's Lemma (see for instance Theorem 6.6 \& Corollary 6.1 of~\cite{Giu}), is general enough to be applied in this context.

\end{proof}

\section{Proof of Lemma~\ref{minscalecorrector}} \label{appB} 
The objective of this appendix is to prove Lemma~\ref{minscalecorrector} which is restated below.

\begin{replemma}{minscalecorrector}[Minimal scale]
\begin{sloppypar}There exist a constant $C := C(d, \p ,\lambda) < \infty$, an exponent $s := s(d, \p ,\lambda) > 0$ and a random variable $\M_1 \leq \O_s' (C)$ such that for each integer $m \in \N$ satisfying $3^m \geq \M_1$,\end{sloppypar}
\begin{equation*} \label{eqminscalecorrectorapp}
3^{-dm} \sum_{z \in \cu_m}\size \left(\cu_\Pa (z) \right)^{\frac{3d(2+\ep)}{\ep}}  \left( \sum_{x \in \Zd, \, \dist \left(\cu_\Pa(x) , \cu_\Pa (z)\right) \leq 1, \, e \in \mathcal{B}_d^x}\left( 1 + \X^e(x) \right)^d \right)^{\frac{2+\ep}{\ep}} \leq C,
\end{equation*}
where $\ep := \ep(d, \p , \lambda) > 0$ is the exponent which appears in Proposition~\ref{meyercorrector}.
\end{replemma}

\begin{proof}[Proof of Lemma~\ref{minscalecorrector}]
First, notice that one can rewrite
\begin{align*}
\lefteqn{ 3^{-dm} \sum_{z \in \cu_m}\size \left(\cu_\Pa (z) \right)^{\frac{3d(2+\ep)}{\ep}}  \left( \sum_{x \in \Zd, \, \dist \left(\cu_\Pa(x) , \cu_\Pa (z)\right) \leq 1, \, e \in \mathcal{B}_d^x}\left( 1 + \X^e(x) \right)^d \right)^{\frac{2+\ep}{\ep}} } \qquad & \\ &
\leq C 3^{-dm} \sum_{z \in \cu_m}\size \left(\cu_\Pa (z) \right)^{\frac{3d(2+\ep)+2}{\ep}}  \sum_{x \in \Zd, \, \dist \left(\cu_\Pa(x) , \cu_\Pa (z)\right) \leq 1, \, e \in \mathcal{B}_d^x} \left( 1 + \X^e(x) \right)^{d \frac{2+\ep}{\ep}} \\
& \leq C 3^{-dm}  \sum_{x \in \Zd, \, \dist \left(\cu_\Pa(x) , \cu_m\right) \leq 1, \, e \in \mathcal{B}_d^x}\size \left(\cu_\Pa (x) \right)^{\frac{3d(2+\ep)+2}{\ep}} \left( 1 + \X^e(x) \right)^{d \frac{2+\ep}{\ep}}.
\end{align*}
By Property~(iv) of Proposition~\ref{p.partitions} applied with the exponent $t = \frac{6d(2+\ep)+4}{\ep}$, it is clear that for each integer $m \in \N$ satisfying $3^m \geq \M_t(\Pa)$, we have:
\begin{enumerate}
\item The inequality $\sup_{x \in \cu_{m+1}} \size \left( \cu_\Pa (x) \right) \leq 3^{\frac{dm}{d+t}},$ which implies
\begin{equation*}
\left\lbrace x \in \Zd , \, \dist \left(\cu_\Pa(x), \cu_m\right)\leq 1 \right\rbrace \subseteq \cu_{m+1};
\end{equation*}
\item The estimate
\begin{align*}
 \left( 3^{-dm} \sum_{x \in \Zd , \, \dist \left(\cu_\Pa(x), \cu_m\right)\leq 1}\size \left(\cu_\Pa (x) \right)^{\frac{6d(2+\ep)+4}{\ep}} \right)^\frac 12 &
\leq C \left( 3^{-d(m+1)} \sum_{x \in \cu_{m+1}}\size \left(\cu_\Pa (x) \right)^{\frac{6d(2+\ep)+4}{\ep}} \right)^\frac 12 \\ & \leq C.
\end{align*}
\end{enumerate}
Thus by the Cauchy-Schwarz inequality, it is enough to prove that there exists a random variable $\M$ satisfying $\M \leq \O_s'(C)$, such that for each integer $m \in \N$ satisfying $3^m \geq \M$,
\begin{equation} \label{minscalebrut1}
3^{-dm} \sum_{x \in \cu_m, \, e \in \mathcal{B}_d^x}  \left( \X^e(x)\right)^{\frac{d(4+2\ep)}{\ep}} \leq C.
\end{equation}
Unfortunately, we cannot prove this exact statement; we will prove a slightly weaker estimate, Lemma~\ref{minscaleforminscale}, which is still strong enough to deduce Proposition~\ref{boundspatialaveragecorrector}. Define, for each constant $C > 0$, the random variable
\begin{multline*}
\X_{C} := \inf  \left\lbrace r \in [1,\infty) \, : \, \forall r',R' \in [r,\infty), \mbox{ with } r' \leq R', \, \forall u\in \A(\C_{\infty} \cap B_{R'}) \right. \\ \left. 
 \left\| \nabla u   \right\|_{\underline{L}^2(\C_{\infty} \cap B_{r'})} \leq C   \frac{r'}{R'} \left\|  \nabla u \right\|_{\underline{L}^2(\C_{\infty}  \cap B_{R'})} \right\rbrace,
\end{multline*}
and, for each point $x \in \Zd$,
\begin{equation*}
\X_{C} (x) :=  \X_{C} \circ \tau_x.
\end{equation*}
Denote by $C_0 := C_0 (d,\p,\lambda) <\infty$ the constant appearing in Proposition~\ref{p.regularitytheory}. By definition we have
\begin{equation*}
\X_{C_0}= \X.
\end{equation*}
Note also that the map $C \mapsto \X_C$ is non-increasing. We have the following lemma.
\begin{lemma} \label{minscaleforminscale}
For every integrability parameter $t > 0$, there exist a constant $C(d,\p,\lambda,t) < \infty$, an exponent $s(d,\p,\lambda,t)> 0$ and a random variable $\M_t^\X$ satisfying
\begin{equation*}
\M_t^{\X} \leq \O_s'(C)
\end{equation*}
such that for every integer $m \in \N$ satisfying $3^m \geq \M_t^{\X}$, one has the inequality
\begin{equation*}
3^{-dm} \sum_{x \in \cu_m, \, e \in \mathcal{B}_d^x} \left| \X_{C_0^2}^e(x)\right|^t \leq C.
\end{equation*}
\end{lemma}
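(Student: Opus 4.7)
The plan is to obtain a pointwise stochastic bound on $\X_{C_0^2}^e(x)$ uniform in $x \in \Zd$ and $e \in \mathcal{B}_d^x$, and then deduce the average-in-$x$ bound by applying Proposition~\ref{p.partitions}(iv) to a good-cube family built from a local truncation of $\X_{C_0^2}$. For the pointwise bound, observe that under the doubled measure $\P'$ the marginal distribution of $\a^e$ is $\P$, so by the $\Zd$-stationarity of $\P$ stated in~\eqref{Pisstat}, the law of $\X_{C_0^2}^e(x)$ under $\P'$ equals the law of $\X_{C_0^2}$ under $\P$. Since $\X_{C_0^2} \leq \X_{C_0} = \X$ by monotonicity in the regularity constant, Proposition~\ref{p.regularitytheory} gives $\X_{C_0^2}^e(x) \leq \O_s'(C)$ with $s,C$ depending only on $d,\p,\lambda$. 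Summing over the $2d$ edges of $\mathcal{B}_d^x$ via~\eqref{e.Oprods} produces
\begin{equation*}
g(x) := \sum_{e \in \mathcal{B}_d^x} |\X_{C_0^2}^e(x)|^t \leq \O_{s/t}'(C_t).
\end{equation*}

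The key structural property of $\X_{C_0^2}$ that allows the passage to a minimal-scale bound is its \emph{approximate locality}: since failure of the regularity estimate at scales $\leq R$ is witnessed by some harmonic function on a ball $B_{R'}(x)$ with $R' \leq R$, the event $\{\X_{C_0^2}(x) > R\}$ is $\F(B_{CR}(x))$-measurable for a dimensional constant $C$. Introducing the truncation $\X^{e,\mathrm{loc},n}_{C_0^2}(y) := \min(\X_{C_0^2}^e(y), 3^n)$, which is $\F'(B_{C3^n}(y))$-measurable, I would define the family
\begin{equation*}
\mathcal{G}_n := \Bigl\{z + \cu_n \in \T_n \ : \ \max_{y \in z + \cu_n,\, e \in \mathcal{B}_d^y} \X^{e,\mathrm{loc},n}_{C_0^2}(y) \leq 3^{n\delta}\Bigr\}
\end{equation*}
for a small exponent $\delta := \delta(d,\p,\lambda,t) \in (0,1)$; when $\delta$ is small enough the event $\{z + \cu_n \notin \mathcal{G}_n\}$ is $\F'(z+\cu_{n+1})$-measurable, and a union bound over the $3^{nd}\cdot 2d$ local events combined with the $\O_s'$ pointwise tail gives $\P'[z + \cu_n \notin \mathcal{G}_n] \leq C\exp(-C^{-1}3^{n\delta s/2})$, so the hypotheses of Proposition~\ref{p.partitions} are met. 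The resulting partition $\mathcal{V}$ satisfies $\size(\cu_\mathcal{V}(x)) \leq \O_{s'}'(C)$, and Proposition~\ref{p.partitions}(iv) applied with the exponent $\delta t$ yields a minimal scale $\M_{\delta t}(\mathcal{V}) \leq \O_r'(C)$ above which $3^{-dm}\sum_{x \in \cu_m} \size(\cu_\mathcal{V}(x))^{\delta t} \leq C$. On each cube of $\mathcal{V}$, the truncated bound $\X^{e,\mathrm{loc},n}_{C_0^2}(y) \leq C\size(\cu_\mathcal{V}(x))^\delta$ (with $n := \log_3 \size(\cu_\mathcal{V}(x))$) combined with the pointwise $\O_{s/t}'$ tail control of the difference $\X_{C_0^2}^e(y) - \X^{e,\mathrm{loc},n}_{C_0^2}(y)$ yields, via~\eqref{e.Oavgs} and~\eqref{e.Oprods}, the claim with $\M_t^\X \leq \O_r'(C)$.

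The main obstacle is the nonlocal dependence of $\X_{C_0^2}^e$ on the environment, which prevents a direct application of Proposition~\ref{p.partitions}(iv). The $n$-dependent truncation $\X^{\mathrm{loc},n}$ sidesteps this, but it introduces a tail contribution that must be controlled uniformly across cubes of $\mathcal{V}$, requiring a careful balance between the truncation exponent $\delta$ and the stretched-exponential rate $s$ coming from Proposition~\ref{p.regularitytheory} so that the truncation error is summable. Additional care is needed in the verification that $\X_{C_0^2}^e$ and its truncation behave comparably under resampling of a single edge, for which Lemma~\ref{resamplepartition} provides the required local stability of the partition.
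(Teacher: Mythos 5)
Your proposal rests on a locality claim that is false, and this is exactly the obstruction that the paper's proof is designed to circumvent. You assert that the event $\{\X_{C_0^2}(x) > R\}$ is $\F(B_{CR}(x))$-measurable and hence that the truncation $\min(\X_{C_0^2}^e(y), 3^n)$ is $\F'(B_{C3^n}(y))$-measurable. But unwinding the definition of $\X_C$ (stated in the paper before Lemma~\ref{minscaleforminscale}), $\X_C \leq R$ means that the Lipschitz estimate holds for all pairs $r' \leq R'$ with $r' \geq R$ and $R'$ ranging up to infinity, so $\{\X_C > R\}$ is witnessed by a failure at some arbitrarily large scale $R'$. Your parenthetical "failure of the regularity estimate at scales $\leq R$" has the inequality backwards: $\X_C$ is a minimal scale above which regularity kicks in, so small values of $\X_C$ encode information at all large scales. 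The truncation $\min(\X_{C_0^2}, 3^n)$ therefore does not become local, and the family $\mathcal{G}_n$ you construct does not satisfy the measurability hypothesis~\eqref{e.goodlocal} of Proposition~\ref{p.partitions}, so part (iv) — the very tool you need for the minimal-scale statement — cannot be invoked. The paper makes this point explicitly in Definition~\ref{def.partitionU}, where it is noted that the nonlocality of $\X$ forces one to fall back on Proposition~\ref{p.partitions.nonlocal}, which lacks part (iv).

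The paper's actual route is structurally different. It does not truncate $\X_{C_0}$; instead it defines a new random variable $\X_{R,C}$ from scratch by restricting the quantifiers to $1 \leq r' \leq R' \leq R$ and replacing $\C_\infty$ with the locally defined cluster $\C_{\mathrm{max}}(B_R)$, which makes $\X_{R,C}$ genuinely $\F(B_R)$-measurable. The price for comparing this local object to the global $\X_{C^2}$ is the inequality~\eqref{estimatelocalXX}, which requires degrading the constant from $C$ to $C^2$ (this is why the lemma is stated for $\X_{C_0^2}$ rather than $\X_{C_0}$, a feature your proposal does not explain or exploit). With the local $\X_{3^n,C_0}^e(x)$ in hand, the paper does not go back through Proposition~\ref{p.partitions}(iv); it runs an explicit exponential-moment calculation using independence of the $\X_{3^n,C_0}^e$ at distance $> 2\cdot 3^n$, establishes the concentration bound $Z \leq C + \O_1'(3^{nt-d(m-n)})$, and then invokes Lemma~\ref{lemmafromperc.0} after optimizing $n$ as a function of $m$. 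So beyond the incorrect locality claim, you would also need to replace your appeal to Proposition~\ref{p.partitions}(iv) with this direct independence argument.
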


\begin{remark}
\begin{enumerate}
\item This statement is weaker than~\eqref{minscalebrut1} since, for each $x \in \Zd$ and $e \in \mathcal{B}_d^x$,
\begin{equation*}
\X_{C_0^2}^e(x) \leq \X_{C_0}^e(x) = \X^e(x).
\end{equation*}
Nevertheless it is enough to prove Result 2, since we only need to replace $C_0$ by $C_0^2$ in every computation involving the estimates on the random variables $\chi_p^e(x)$; the result remains unchanged.
\item Applying this result with $t= \frac{d (4 + 2 \ep)}{\ep}$ completes the proof of Lemma~\ref{minscalecorrector}.
\end{enumerate}
\end{remark}

\end{proof} 
There remains to prove Lemma~\ref{minscaleforminscale}. Before starting the proof, we introduce a few ingredients and preliminary results. First define, for each radius $R > 0$, and each constant $C >0$,
\begin{multline} \label{definitionlocalisedminscale.def}
\X_{R,C} := \inf  \left\lbrace r \in [1,R] \, : \, \forall r',R' \in [r,R], \mbox{ with } r' \leq R', \, \forall u\in \A(\C_{\mathrm{max}}(B_{R}) \cap B_{R'}) \right. \\ \left. 
 \left\| \nabla u   \right\|_{\underline{L}^2(\C_{\mathrm{max}}(B_{R}) \cap B_{r'})} \leq C   \frac{r'}{R'} \left\|  \nabla u \right\|_{\underline{L}^2(\C_{\mathrm{max}}(B_{R})  \cap B_{R'})} \right\rbrace,
\end{multline}
where $\C_{\mathrm{max}}(B_{R})$ denotes the largest cluster contained in $B_{R}$; if there is more than one candidate, we break ties using a deterministic procedure. Similarly we define, for each point $x \in \Zd$,
\begin{equation*}
\X_{C} (x) :=  \X_{C} \circ \tau_x.
\end{equation*}

This random variable is defined on the enlarged probability space $\Omega \times \Omega$ and is measurable with respect to the $\sigma$-algebra $\F(x +B_R) \otimes \{ \emptyset, \Omega \}$ (it depends on the edges in the ball $x+ B_R$ of the first variable and does not depend on the edges of the second variable). Consequently the random variables $\X_{R,C}(x)$ and $\X_{R,C}(y)$ are independent if $|x-y| > 2R$.

Note also that $\X_{R,C}$ is decreasing in the $C$ variable and, for $R \geq \M_t(\Pa)$, it is increasing in the $R$ variable. We thus denote by, for each $C \geq 1$

\begin{equation*}
\X_{C} := \lim_{R \rightarrow \infty} \X_{R,C} = \limsup_{R \geq 1}  \X_{R,C} \in [1,\infty].
\end{equation*}

By Proposition~\ref{p.regularitytheory}, we know that there exists a constant $C_0 := C_0(d,\p,\lambda) < \infty$ such that
\begin{equation} \label{minscalenewnotation}
\X_{C_0}= \X \leq \O_s'(C). 
\end{equation}
thus, for each radius $R \geq 1$,
\begin{equation*}
\X_{R,C_0} \indc_{\{ R \geq  \M_t(\Pa)\}} \leq \X_{C_0} \leq \O_s'(C). 
\end{equation*}
Moreover, for each radius $R \geq 1$, we have
\begin{equation*}
\X_{R,C_0} \indc_{\{ R \in [1, \M_t(\Pa)] \}} \leq \M_t(\Pa) \leq \O_s'(C). 
\end{equation*}
Combining the two previous displays yields, for each $R \geq 1$,
\begin{equation*}
\X_{R,C_0} \leq \O_s'(C).
\end{equation*}
We now prove the following inequality, for each $R,C > 1$,
\begin{equation} \label{estimatelocalXX}
\X_{C^2}\leq \X_{R,C} + R \indc_{\{ R \leq \M_t(\Pa)\}} + \X_{C} \indc_{\{  \X_{C} > R \}}.
\end{equation}
We split the proof of this inequality into two cases.

\smallskip

\textit{Case 1.} If $\X_C > R$, then since $C \geq 1$ and the map $C \mapsto \X_C$ is decreasing, the inequality ~\eqref{estimatelocalXX} is a consequence of the estimate
\begin{equation*}
\X_{C^2}\leq  \X_{C} \leq  \X_{R,C}+  R \indc_{\{ R \leq \M_t(\Pa)\}} + \X_{C} \indc_{\{  \X_{C} > R \}}.
\end{equation*}

\textit{Case 2.} If $\X_C \leq R$ and $R \leq \M_t(\Pa)$, then
\begin{equation*}
\X_{C^2} \leq \leq \X_C  R \indc_{\{ R \leq \M_t(\Pa)\}} \leq \X_{R,C} + R \indc_{\{ R \leq \M_t(\Pa)\}} + \X_{C} \indc_{\{  \X_{C} > R \}}.
\end{equation*}

\textit{Case 3.} If $\X_C \leq R$ and $R \geq \M_t(\Pa)$ then $\C_{\mathrm{max}}(B_R)$ is equal to the maximal connected component of $\C_\infty \cap B_R$ and we have, for each pair of radii $r,R' > R$ satisfying $R' \geq r$
\begin{equation*}
 \left\| \nabla u  \right\|_{\underline{L}^2(\C_\infty\cap B_r)} \leq C   \frac{r}{R'} \left\|  \nabla u \right\|_{\underline{L}^2(\C_\infty\cap B_{R'})}.
\end{equation*}
Moreover, for each pair of radii $r,R' \in [\X_{R,C}, R]$ with $R' \geq r$, we have
\begin{equation*}
\left\| \nabla u  \right\|_{\underline{L}^2(\C_\infty\cap B_r)} \leq C   \frac{r}{R'} \left\|  \nabla u \right\|_{\underline{L}^2(\C_\infty\cap B_{R'})}.
\end{equation*}
Combining the two previous displays and using $C^2 \geq C$ yields for each pair of radii $r,R' \geq \X_{R,C}$ with $R' \geq r$,
\begin{equation*}
\left\| \nabla u \right\|_{\underline{L}^2(\C_\infty\cap B_r)} \leq C^2   \frac{r}{R'} \left\|  \nabla u \right\|_{\underline{L}^2(\C_\infty\cap B_{R'})}
\end{equation*}
and thus by definition of the random variable $\X_{C^2}$,
\begin{equation*}
\X_{C^2} \leq \X_{R,C}
\end{equation*}
and the proof of the inequality~\eqref{estimatelocalXX} is complete.

For $x \in \Zd, e = \{ x,y \} \in \mathcal{B}_d, C,R \in [1,\infty)$, denote by $\X_{R,C}^e(x)$ the translated and resampled random variable
\begin{multline*}
\X_{R,C}^e(x) := \inf  \left\lbrace r \in [1,R] \, : \,\mbox{such that } \forall 1 \leq  r' \leq R' \leq R,\, u\in \A^e(\C_{\mathrm{max}}^e(B_R(x)) \cap B_{R'}(x)) \right. \\ \left. 
 \left\| \nabla u  \right\|_{\underline{L}^2(\C_{\mathrm{max}}^e(B_R) \cap B_{r'}(x))} \leq C   \frac{r'}{R'} \left\|  \nabla u\right\|_{\underline{L}^2\left(\C_{\mathrm{max}}^e(B_R(x)) \cap B_{R'}(x) \right)} \right\rbrace.
\end{multline*}
We also define, for each point $x \in \Zd$
\begin{equation*}
\X_{C}^e(x) := \lim_{R \rightarrow \infty} \X_{R,C}^e(x) = \limsup_{R \geq 1}  \X_{R,C}^e(x) \in [1,\infty].
\end{equation*}

The second ingredient in the proof of Lemma~\ref{minscaleforminscale} is the following minimal scale lemma. It is an adaptation of~\cite[Lemma 2.3]{AD2} and will be used in the proof of Lemma~\ref{minscaleforminscale}.

\begin{lemma} \label{lemmafromperc.0}
Fix $K\geq 1$, $s>0$ and $\beta>0$ and suppose that $\{ X_n \}_{n\in\N}$ is a sequence of random variables satisfying, for every $n\in\N$,
\begin{equation*} 
X_n \leq K + \O_s\left( K3^{-n\beta} \right). 
\end{equation*}
Then there exists $C(s,\beta,K) <\infty$ such that the random scale
\begin{equation*}
M := \sup \left\{ 3^n\in\N \,:\, X_n \geq K + 1 \right\}
\end{equation*}
satisfies the estimate
\begin{equation*} 
M \leq \O_{s\beta}(C). 
\end{equation*}
\end{lemma}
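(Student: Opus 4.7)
The plan is straightforward: first convert the hypothesis into an elementary tail estimate for each $X_n$, then apply a union bound over scales to obtain a tail estimate for $M$, and finally translate this tail estimate back into a stretched exponential moment bound of the form $M \leq \O_{s\beta}(C)$.

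To begin, I would record the tail estimate implied by the hypothesis. Using the Markov-type tail bound that comes with the $\O_s$ notation (namely $Y \leq \O_s(\theta)$ implies $\P[Y \geq \theta t] \leq 2\exp(-t^s)$) applied with $\theta = K 3^{-n\beta}$ and $t = K^{-1}3^{n\beta}$, the bound $X_n - K \leq \O_s(K 3^{-n\beta})$ yields
\begin{equation*}
\P[X_n \geq K + 1] \leq 2\exp\bigl(-K^{-s}3^{n\beta s}\bigr).
\end{equation*}

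Next I would unroll the event $\{M \geq 3^n\}$: since $M \geq 3^n$ occurs precisely when there exists some $k \geq n$ with $X_k \geq K+1$, a union bound produces
\begin{equation*}
\P[M \geq 3^n] \leq \sum_{k \geq n} 2\exp\bigl(-K^{-s}3^{k\beta s}\bigr) \leq C\exp\bigl(-C^{-1}3^{n\beta s}\bigr),
\end{equation*}
for some $C = C(K,s,\beta) < \infty$, where the second inequality holds because the $k = n$ term dominates the sum geometrically.

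Finally I would convert the tail bound into a stretched exponential moment estimate. Since $M$ only takes values in the discrete set $\{3^n : n \in \N\}$ (adopting the convention $M = 1$ when the defining set is empty), the moment generating function can be expanded as
\begin{equation*}
\E\bigl[\exp\bigl((M/C')^{s\beta}\bigr)\bigr] \leq \sum_{n \geq 0} \exp\bigl((3^n/C')^{s\beta}\bigr)\,\P[M \geq 3^n] \leq C\sum_{n\geq 0}\exp\Bigl(3^{n\beta s}\bigl((C')^{-s\beta}-C^{-1}\bigr)\Bigr).
\end{equation*}
Choosing $C'$ large enough that $(C')^{-s\beta} \leq \tfrac{1}{2}C^{-1}$ makes each summand at most $\exp(-3^{n\beta s}/(2C))$; the resulting convergent series can then be driven below $2$ by further enlarging $C'$, whereupon $M \leq \O_{s\beta}(C')$. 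There is no real obstacle here beyond bookkeeping: one must make sure that the three successive constants (from Markov, the union bound, and the moment expansion) combine into a single $C$ depending only on $s$, $\beta$, and $K$, and that the final tail decay rate $3^{n\beta s}$ matches the target exponent $s\beta$ in the $\O_{s\beta}$ estimate.
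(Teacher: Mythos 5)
Your strategy differs from the paper's, which disposes of the lemma in one line by applying \cite[Lemma~2.3]{AD2} to the shifted variables $\max(X_n - K, 0)$; you instead give a self-contained argument (tail bound, union bound, moment expansion). The first two steps are correct: the Chebyshev-type tail estimate for $X_n$ and the union bound together do give $\P[M \geq 3^n] \leq C\exp(-C^{-1} 3^{n\beta s})$ with $C = C(K,s,\beta)$.

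The gap is in the final step. The inequality
\begin{equation*}
\E\bigl[\exp\bigl((M/C')^{s\beta}\bigr)\bigr] \leq \sum_{n \geq 0} \exp\bigl((3^n/C')^{s\beta}\bigr)\,\P[M \geq 3^n]
\end{equation*}
is valid, but the right-hand side \emph{cannot} be driven below $2$ by enlarging $C'$. As $C' \to \infty$ the factors $\exp((3^n/C')^{s\beta})$ all tend to $1$, so the right side converges to $\sum_{n\geq 0}\P[M\geq 3^n]$; the $n=0$ term alone is $\P[M\geq 1]=1$ (with your convention), and once you replace each $\P[M\geq 3^n]$ by the upper bound $C\exp(-C^{-1}3^{n\beta s})$, the limit is $C\sum_n \exp(-C^{-1}3^{n\beta s})$, which is of order $C$, not $2$. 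So the phrase ``can then be driven below $2$ by further enlarging $C'$'' is not justified: enlarging $C'$ only brings the sum down to a fixed constant that may exceed $2$.

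The remedy is to use the tight layer-cake (summation-by-parts) identity for nonnegative increasing $f$ and $M$ taking values in $\{3^n\}_{n\geq0}$:
\begin{equation*}
\E[f(M)] \;=\; f(1) + \sum_{n\geq 1}\bigl(f(3^n)-f(3^{n-1})\bigr)\,\P[M\geq 3^n].
\end{equation*}
With $f(t)=\exp((t/C')^{s\beta})$, the increments $f(3^n)-f(3^{n-1})$ carry a factor $(C')^{-s\beta}3^{n s\beta}$ (from the mean-value bound $e^a - e^b \leq e^a(a-b)$), and after inserting the tail estimate the $n$-sum is bounded by $C(C')^{-s\beta}\sum_{n\geq1}3^{n s\beta}\exp(-\tfrac12 C^{-1}3^{n s\beta})$, which \emph{does} tend to $0$ as $C'\to\infty$; combined with $f(1)\to 1$, one obtains $\E[\exp((M/C')^{s\beta})] \leq 2$ for $C'$ large. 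Alternatively, you could simply cite the standard fact that a stretched-exponential tail bound $\P[Y\geq t]\leq C\exp(-C^{-1}t^\alpha)$ implies $Y\leq\O_\alpha(C')$, which is part of the toolkit of \cite[Appendix~A]{armstrong2017quantitative} referenced in Section~1. Either fix is short, but as written the last step does not go through.
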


\begin{proof}
This result can be deduced by applying~\cite[Lemma 2.3]{AD2} to the sequence of random variables $X_n' = \max \left( X_n - K, 0 \right). $
\end{proof}

We now turn to the proof of Lemma~\ref{minscaleforminscale}.

\begin{proof}[Proof of Lemma~\ref{minscaleforminscale}]
Fix an exponent $t \in (0, \infty)$ and $m,n \in \N$ with $m > n$. Using the inequality~\eqref{estimatelocalXX}, we have
\begin{align} \label{e.Lambdatbounds}
\lefteqn{3^{-dm} \sum_{x \in \cu_m, \, e \in \mathcal{B}_d^x} \left| \X_{C_0^2}^e(x)\right|^t} \qquad & \\ & 
  \leq  C 3^{-dm} \sum_{x \in \cu_m, \, e \in \mathcal{B}_d^x} \left| \X_{3^n, C_0}^e(x)\right|^t +  C  3^{-dm} \sum_{x \in \cu_m, \, e \in \mathcal{B}_d^x} \left| \X_{C_0}^e(x) \right|^t \indc_{\left\lbrace \X_{C_0}^e(x) > 3^n \right\rbrace}   \notag \\ &
\qquad + C 3^{-dm} \sum_{x \in \cu_m}    3^{tn} \indc_{\{ 3^n \leq \M_t\left(\Pa^e\right)\}} \circ \tau_x . \notag
\end{align}
Since $\X_{C_0}^e(x) \leq \O_s'(C)$, for every $t,t' > 0, $ there exist an exponent $s'(d,\p,\lambda,t,t') > 0$ and a constant $C'(d,\p,\lambda,t,t') < \infty$ such that
\begin{equation*}
3^{-dm} \sum_{x \in \cu_m, \, e \in \mathcal{B}_d^x} \left|  \X_{C_0}^e(x) \right|^t \indc_{\left\lbrace \X_{C_0}^e(x) > 3^n \right\rbrace}  \leq \O_{s'}'(C' 3^{-nt'} )
\end{equation*}
and
\begin{equation*}
3^{-dm} \sum_{x \in \cu_m, \, e \in \mathcal{B}_d^x}  3^{nt} \indc_{\{ 3^n \geq \M_t(\Pa)\}} \circ \tau_x \leq \O_{s'}'(C' 3^{-nt'} ).
\end{equation*}
Combining the previous displays yields
\begin{equation*}
3^{-dm} \sum_{x \in \cu_m, \, e \in \mathcal{B}_d^x} \left|  \X_{C_0^2}^e(x)\right|^t \leq C 3^{-dm} \sum_{x \in \cu_m, \, e \in \mathcal{B}_d^x} \left|  \X_{3^n, C_0}^e(x)\right|^t  +  \O_{s'}'(C' 3^{-nt'} ).
\end{equation*}
Moreover, notice that by definition of the localized random variable $\X_{3^n, C_0}^e(x)$, we have for each $x \in \Zd$
\begin{equation*}
  \sum_{e \in \mathcal{B}_d^x} \left| \X_{3^n, C_0}^e(x)\right|^t \leq 2d \times 3^{nt}.
\end{equation*}
The proof of the lemma is then the same as the proof of Steps 1 and 2 of~\cite[Proposition 2.1]{AD2} with the random variable $3^{-dm} \sum_{x \in \cu_m, \, e \in \mathcal{B}_d^x} \left| \X_{C_0^2}^e(x)\right|^t$ instead of $\Lambda_t(z + \cu_m, \S)$ and the random variable $3^{-dm} \sum_{x \in z + \cu_n, \, e \in \mathcal{B}_d^x} \left| \X_{3^n, C_0^2}^e(x)\right|^t$ instead of $\Lambda_t(z' + \cu_n, \S_\mathrm{loc}(z'))$. We rewrite it for completeness.

\smallskip

We denote by
\begin{equation*}
Z := 3^{-dm} \sum_{x \in \cu_m, \, e \in \mathcal{B}_d^x} \left| \X_{3^n, C_0}^e(x)\right|^t  = \frac{|\cu_n|}{|\cu_m|} \sum_{z \in 3^n \Zd \cap \cu_m}  3^{-dm} \sum_{x \in z +\cu_n, \, e \in \mathcal{B}_d^x} \left| \X_{3^n, C_0}^e(x)\right|^t .
\end{equation*}
We first prove that there exists a constant $C := C(d,\p, \lambda,t) < \infty$ such that
\begin{equation} \label{e.Zbound}
Z \leq C + \O_1' \left(3^{nt-d(m-n)} \right). 
\end{equation}
\begin{sloppypar}\noindent To this end, choose an enumeration $\{ z^j : 1 \leq j \leq 3^{d(m-n-2)} \}$ of the elements of the set $3^{n+2}\Zd \cap \cu_m$. For each $1\leq j \leq  3^{d(m-n-2)}$, we let $\{ z^{i,j} \,:\, 1 \leq i \leq 3^{2d}\}$ be an enumeration of the elements of the set $3^n \Zd \cap \left( z^j + \cu_{n+2} \right)$, such that, for every $1\leq j,j'\leq 3^{d(m-n-2)}$ and $1\leq i \leq 3^{2d}$, $z^j - z^{j'} = z^{i,j} - z^{i,j'}$. The point of this is that, for every $1 \leq i \leq 3^{2d}$ and $1 \leq j < j' \leq 3^{d(m-n-2)}$, we have $\dist\left(z^{i,j} + \cu_n ,z^{i,j'}+ \cu_n\right) \geq 3^{n+1}$ and therefore, $3^{-dm} \sum_{x \in z^{i,j} + \cu_n, \, e \in \mathcal{B}_d^x} \left| \X_{3^n, C_0^2}^e(x)\right|^t$ and $3^{-dm} \sum_{x \in z^{i,j'} + \cu_n, \, e \in \mathcal{B}_d^x} \left| \X_{3^n, C_0^2}^e(x)\right|^t $ are independent.
Now fix $h > 0$ and compute, using the H\"older inequality and the independence, \end{sloppypar}
\begin{align*}
\lefteqn{
\log \E\left[ \exp \left( h3^{-nt} Z \right)  \right] 
} \qquad & \\
& = \log \E \left[ \prod_{i =1}^{3^{2d}}  \prod_{ j =1 }^{ 3^{d(m-n-2)}} \exp\left( h 3^{-nt-d(m-n)} 3^{-dm} \sum_{x \in z^{i,j} + \cu_n, \, e \in \mathcal{B}_d^x} \left|  \X_{3^n, C_0^2}^e(x)\right|^t \right) \right] \\
& \leq 3^{-{2d}} \sum_{i =1}^{3^{2d}} \log \E \left[  \prod_{ j =1 }^{ 3^{d(m-n-2)}}   \exp\left( h 3^{-nt-d(m-n-2)}  3^{-dm} \sum_{x \in z^{i,j} + \cu_n, \, e \in \mathcal{B}_d^x} \left|  \X_{3^n, C_0^2}^e(x)\right|^t  \right) \right] \\
& \leq 3^{-{2d}} \sum_{i =1}^{3^{2d}}   \sum_{ j =1 }^{ 3^{d(m-n-2)}}  \log \E \left[  \exp\left( h 3^{-nt-d(m-n-2)} 3^{-dm} \sum_{x \in z^{i,j} + \cu_n, \, e \in \mathcal{B}_d^x} \left|  \X_{3^n, C_0^2}^e(x)\right|^t  \right) \right].
\end{align*}
This inequality can be rewritten
\begin{multline*}
\log \E\left[ \exp \left( h3^{-nt} Z \right)  \right]  \\
\leq 3^{-2d} \sum_{z'\in 3^n\Zd\cap(z+\cu_m)}  \log \E \left[  \exp\left( h 3^{-nt-d(m-n-2)} 3^{-dm} \sum_{x \in z' + \cu_n, \, e \in \mathcal{B}_d^x} \left|  \X_{3^n, C_0^2}^e(x)\right|^t  \right) \right].
\end{multline*}
We use the inequality
\begin{equation*} \label{}
\forall y \in [0,1], \quad \exp(y) \leq 1+2y
\end{equation*}
to obtain, for every $h \in [0,(2d)^{-t}3^{d(m-n-2)}]$, 
\begin{equation*} \label{}
\exp\left( h 3^{-nt-d(m-n-2)}\sum_{x \in z' + \cu_n, \, e \in \mathcal{B}_d^x} \left| \X_{3^n, C_0^2}^e(x)\right|^t \right)
\leq 1+2h 3^{-nt-d(m-n-2)} \sum_{x \in z' + \cu_n, \, e \in \mathcal{B}_d^x} \left|   \X_{3^n, C_0^2}^e(x)\right|^t .
\end{equation*}
Taking the expectation in the previous display and using the elementary inequality
\begin{equation*}
\forall y\geq 0, \ \log(1+y) \leq y,
\end{equation*}
we obtain
\begin{align*} \label{}
\log \E\left[ \exp \left( h3^{-nt} Z \right)  \right] 
& \leq 3^{d(m-n)} \log \left( 1 + 2 h 3^{-nt-d(m-n-1)} \E \left[  \sum_{x \in z' + \cu_n, \, e \in \mathcal{B}_d^x} \left|   \X_{3^n, C_0^2}^e(x)\right|^t \right] \right)  \\
& \leq 2h 3^{-nt+d} \E \left[  \sum_{x \in z' + \cu_n, \, e \in \mathcal{B}_d^x} \left|   \X_{3^n, C_0^2}^e(x)\right|^t \right] \\
& \leq Ch3^{-nt}.
\end{align*}
Taking $h:= (2d)^{-t}3^{d(m-n-2)}$ yields
\begin{equation*}
\E\left[ \exp \left(  (2d)^{-t}3^{d(m-n-2)-nt} Z \right)  \right] \leq \exp\left( C3^{d(m-n)-nt} \right). 
\end{equation*}
From this and Chebyshev's inequality, we obtain a constant~$C$ such that 
\begin{equation*}
\P \left[ Z \geq C + h \right] \leq \exp\left( -h C^{-1}3^{d(m-n)-nt} \right).
\end{equation*}
This implies~\eqref{e.Zbound}.

\smallskip

\emph{Step 2.} We complete the proof by applying a union bound. Combining~\eqref{e.Lambdatbounds} and~\eqref{e.Zbound} yields
\begin{equation*} \label{}
 \sum_{x \in \cu_m, \, e \in \mathcal{B}_d^x} \left|  \X_{3^n, C_0}^e(x)\right|^t \leq C + \O_1 \left( C3^{nt-d(m-n)} \right)+ \O_{s'}\left( C3^{-nt'} \right).
\end{equation*}
We set
\begin{equation*} \label{}
n:= \left\lceil \frac{dm}{d+t+1} \right\rceil \mbox{ and } t' = 1
\end{equation*}
so that the previous line becomes 
\begin{equation*} \label{}
\sum_{x \in  \cu_m, \, e \in \mathcal{B}_d^x} \left| \X_{3^n, C_0}^e(x)\right|^t \leq C + \O_1' \left( C3^{-\frac{d}{d+t+1}m} \right)+ \O_{s'}'\left( C3^{-\frac{d}{d+t+1}m}  \right).
\end{equation*}
Thus, by~\eqref{e.improves} and~\eqref{e.Osums}, we obtain the existence of two exponents $s := s(d , \p, \lambda , t) > 0$, $\beta := \beta(d , \p, \lambda , t) > 0$ and of a constant $C_0 := C_0(d , \p, \lambda , t) < \infty$ such that
\begin{equation*}
\sum_{x \in \cu_m, \, e \in \mathcal{B}_d^x} \left| \X_{3^n, C_0}^e(x)\right|^t  \leq C_0 + \O_{s}'\left( C_03^{-\beta m}  \right).
\end{equation*}
Define 
\begin{equation*} \label{}
\M_t^\X:= \sup\left\{ 3^m \,:\,  \sum_{x \in \cu_m, \, e \in \mathcal{B}_d^x} \left|  \X_{3^n, C_0}^e(x)\right|^t  \geq C_0 +1 \right\}.
\end{equation*}
We apply Lemma~\ref{lemmafromperc.0} with $X_n =\sum_{x \in \cu_m, \, e \in \mathcal{B}_d^x} \left|  \X_{3^n, C_0}^e(x)\right|^t$ and $K = C_0$ to obtain the inequality
\begin{equation*}
\M_t^\X \leq \O_{s \beta} \left( C \right).
\end{equation*}
The proof is complete.

\end{proof}

\bibliographystyle{abbrv}
\bibliography{holes}

\end{document}